\newcommand{\numberset}{\mathbb}
\newtheorem{theorem}{Theorem}[section]
\newtheorem{corollary}{Corollary}[section]
\newtheorem{lemma}[theorem]{Lemma}
\newtheorem{proposition}{Proposition}[section]
\newcommand{\cfun}[1]{\textnormal{\texttt{\detokenize{#1}}}}
\newcommand{\cfan}[1]{\textnormal{\textbf{\detokenize{#1}}}}
\newcommand{\N}{\numberset{N}}
\newcommand{\R}{\numberset{R}}
\newcommand{\Pk}{\numberset{P}}
\renewcommand{\epsilon}{\varepsilon}
\renewcommand{\theta}{\vartheta}
\renewcommand{\rho}{\varrho}
\renewcommand{\phi}{\varphi}
\newcommand{\xx}{\boldsymbol{x}}
\newcommand{\uu}{\boldsymbol{u}}
\newcommand{\uht}{\boldsymbol{U}}
\newcommand{\pht}{P}
\newcommand{\qht}{Q}
\newcommand{\vv}{\boldsymbol{v}}
\newcommand{\vht}{\boldsymbol{V}}
\newcommand{\zz}{\boldsymbol{z}}
\newcommand{\ww}{\boldsymbol{w}}
\newcommand{\wht}{\boldsymbol{W}}
\newcommand{\ff}{\boldsymbol{f}}
\newcommand{\nn}{\boldsymbol{n}}
\newcommand{\tf}{\boldsymbol{t}}
\newcommand{\dd}{{\rm div}}
\newcommand{\gr}{\nabla}
\newcommand{\Gr}{\boldsymbol{\nabla}}
\newcommand{\cc}{{{\rm curl}}}
\newcommand{\CC}{\mathbf{{curl}}}
\newcommand{\divv}{\mathbf{{div}}}
\newcommand{\epseps}{\boldsymbol{\epsilon}}
\def\LstabG{\mathcal{L}}
\def\CstabG{\mathcal{C}}
\def\JstabG{\mathcal{J}}
\def\FstabG{\mathcal{F}}
\def\AstabGn{\mathcal{A}_{{\rm stab},n}}
\def\AstabGN{\mathcal{A}_{{\rm stab},N}}
\def\rgn{{\mathcal{R}}_{{\rm stab},n}}
\def\rgN{{\mathcal{R}}_{{\rm stab},N}}
\def\KstabG{\mathcal{K}_{\rm stab}}
\def\nstabj{{\rm stab},j}
\def\nstabn{{\rm stab},n}
\def\reg{s}
\newcommand{\uui}{\mathcal{P}\uu}
\newcommand{\eei}{\boldsymbol{\eta}}
\newcommand{\eea}{\boldsymbol{\Theta}}
\newcommand{\ssi}{\phi_{\eei}}
\newcommand{\I}{I}
\newcommand{\VCG}{\boldsymbol{\mathcal{V}}}
\newcommand{\ZCG}{\boldsymbol{\mathcal{Z}}}
\newcommand{\QCG}{\mathcal{Q}}
\newcommand{\VDG}{\boldsymbol{\mathcal{V}}_h}
\newcommand{\VDGt}{\boldsymbol{\mathcal{V}}_h^\tau}
\newcommand{\ZDG}{\boldsymbol{\mathcal{Z}}_h}
\newcommand{\ZDGt}{\boldsymbol{\mathcal{Z}}_h^\tau}
\newcommand{\QDG}{\mathcal{Q}_h}
\newcommand{\QDGt}{\mathcal{Q}_h^\tau}
\def\errA{\zeta_{\mathcal{A}}}
\def\errC{\zeta_{\mathcal{C}}}
\def\errc{\zeta_{c}}
\def\errJ{\zeta_{\mathcal{J}}}
\def\err0{\zeta_0}
\newcommand{\PCG}{\Phi}
\newcommand{\PDG}{\Phi_h}
\newcommand{\jump}[1]{\lbrack\!\lbrack\,#1\,\rbrack\!\rbrack}
\newtheorem{rmk}{Remark}
\title{Pressure robust SUPG-stabilized finite elements for the unsteady Navier-Stokes equation}
\author[1]{L. Beir\~ao da Veiga \thanks{lourenco.beirao@unimib.it}}
\author[1]{F. Dassi \thanks{franco.dassi@unimib.it}}
\author[2]{G. Vacca \thanks{giuseppe.vacca@uniba.it}}
\affil[1]{Dipartimento di Matematica e Applicazioni,
Universit\`a degli Studi di Milano Bicocca,
Via Roberto Cozzi 55 - 20125 Milano, Italy}
\affil[2]{Dipartimento di Matematica, 
Universit\`a degli Studi di Bari, 
Via Edoardo Orabona 4  - 70125 Bari, Italy}
\begin{document}
\maketitle

\abstract{

In the present contribution we propose a novel conforming Finite Element scheme for the time-dependent Navier-Stokes equation, which is proven to be both convection quasi-robust and pressure robust.
The method is built combining a ``divergence-free'' velocity/pressure couple (such as the Scott-Vogelius element), a Discontinuous Galerkin in time approximation, and a suitable SUPG-curl stabilization.
A set of numerical tests, in accordance with the theoretical results, is included.
}



\section{Introduction}

The development and analysis of finite element (FE) schemes for the incompressible Navier-Stokes (NS) equation, both stationary and time-dependent, has been a major objective of research for a long time in the mathematical community. In addition to the more specific articles below, we mention here as an example the monographs \cite{girault-raviart:book,quartapelle:book,volker:book} and the recent article on modern perspectives \cite{VKN18}. Among many, we here recall two important numerical aspects in the FE literature of the NS equation, the first (and oldest) one being the difficulty related to convection dominated flows, which may lead to poor convergence of the numerical scheme and unphysical oscillations in the discrete solution. In order to deal with situations where convection dominates over diffusion, many stabilization approaches has been considered. Limiting the bibliography to a few non exhaustive references, we here cite the famous Streamline Upwind Petrov-Galerkin (SUPG), or its variants such as Least-Squares or Douglas-Wang, \cite{FF:1982,BH:1982,TB:1996}, Continuous Interior Penalty (CIP) \cite{BFH:2006,BF:2007}, Grad-Div stabilization \cite{OLHL:2009,grad-div}, Local Projection (LP) \cite{BB:2006,MT:2015,LPS-NS}, Variational MultiScale \cite{JK:2005,HJ:2000} and, resorting to non-conforming schemes, also up-winding techniques for Discontinuous Galerkin \cite{BMS-conv-04} possibly based on $H_{div}$ conforming spaces \cite{Hdiv1,Hdiv2,Hdiv3}. At the theoretical level, we here label a scheme as \emph{convection quasi-robust} if, assuming sufficient regularity of the exact solution, the velocity error is independent of the diffusion coefficient $\nu$ and the order of convergence in convection dominated cases is optimal (that is $O(h^{k+1/2})$ in a suitable velocity norm including an $L^2$ term and a convection term, where $k$ represents the ``polynomial order'' and $h$ the mesh size).

Another important, and more recent, aspect is that of pressure robustness \cite{benchmark3}. Briefly speaking, a pressure robust scheme is one for which modifications of the continuous problem that only affects the pressure leads to changes in the discrete solution that only affect the discrete pressure. As it has been deeply investigated in a series of papers, pressure robustness is an important property that leads to a series of advantages, see for instance \cite{linke-merdon:2016,john-linke-merdon-neilan-rebholz:2017}. One way to acquire pressure robustness is to use a FE scheme that guarantees a divergence free discrete kernel, provided the involved forms are approximated exactly, or possibly using ad-hoc projections \cite{benchmark3}. We here mention also the discussion on the Virtual Element schemes in \cite{BLV:2018,BMV:2018}. 

When looking in deep at the two important aspects above, it becomes clear that it is not easy to develop a numerical scheme which satisfies both convection quasi-robustness and pressure robustness. Although the latter helps also in handling convection \cite{SchroederLube}, it is not fully sufficient. Among the stabilizing schemes above, many, such as SUPG, introduce a coupling between pressure and velocity which destroys the pressure robustness property. Others, such as CIP, are in principle free of such hindrance but a convergence proof reflecting pressure robustness would be a major challenge and does not exist in the literature. To the best of these authors knowledge, the only FE method which satisfies both properties for the Navier-Stokes equation is the up-winding stabilized $H_{div}$ conforming method studied in \cite{Hdiv1,Hdiv2,Hdiv3}. Indeed such scheme is one of the best performing also at the practical level, see for instance the interesting comparison in \cite{Hdiv2}. Regarding the Oseen equation, an interesting variant of the SUPG approach was proposed in \cite{Linke:2020}. By stabilizing the curl of the momentum equation and adding suitable jump terms the authors are able to propose a convection stabilization that does not ruin pressure robustness. 

The objective of the present contribution is to propose for the first time in the literature a conforming (discrete velocities are in $H^1$) FE scheme for the time-dependent NS equation, which is both convection quasi-robust and pressure robust. The starting point is to apply the idea of \cite{Linke:2020} to the time-dependent NS equation, an important difficulty being that the SUPG approach is not suitable to standard ``difference based'' time discretization approaches. We need therefore to resort, in the spirit of \cite{HS:1990,tohomee:book}, to a discontinuous Galerkin approximation in time. The ensuing scheme therefore starts from a ``divergence-free'' velocity/pressure couple (such as the Scott-Vogelius element, see for instance \cite{Falk-Neilan-13,john-linke-merdon-neilan-rebholz:2017}), applies a DG in time approximation and adds a suitable SUPG-curl stabilization. We are able to prove that the proposed method is indeed pressure-robust and convection quasi-robust, the proof being quite involved since, in addition to combining the difficulties of the two approaches above, it leads to additional challenges peculiar to the current situation. Due to the complex technical nature of the scheme, we restrict our analysis to the case with first order polynomials in time and require a mesh-quasi uniformity assumption. A discussion related to avoiding such assumption, and the associated effects on the final convergence result, is included in the article. 

Summing up, the proposed approach yields, in a suitable discrete norm, velocity error estimates that reflect pressure robustness (for instance, are free of influence from the pressure), are $\nu$-independent and of order $O(\tau^2 + h^{k+1/2})$ in convection dominated regimes, where $\tau$ and $h$ are the discretization parameters, respectively in time and space. We furthermore present a set of numerical tests which are in accordance with the theoretical predictions and compare the proposed method with a non-stabilized version.  It is finally worth noting that, in the classification of the recent review \cite{garcia} (which classifies robust FE schemes for time-dependent Navier-Stokes in terms of its convergence order in convection dominated regimes, that is $O(h^{k-1})$, $O(h^k)$ or $O(h^{k+1/2})$) our method would fall into the $O(h^{k+1/2})$ class.

The paper is organized as follows. We introduce the continuous problem in Section \ref{sec:cont} and some important preliminaries in Section \ref{sec:notations}. Afterwards, in Section \ref{sec:stab} we introduce the discrete problem and some related consistency and stability results. In Section \ref{sec:theo} we carry out the convergence analysis of the proposed method and draw some comments. Finally, in Section \ref{sec:num} we show some numerical results.

\section{Continuous problem}\label{sec:cont}

We start this section with some standard notations. 
Let $\Omega \subset \R^2$ be the computational domain, 
we denote with $\xx = (x_1, \, x_2)$ the independent variable. 
The symbols $\gr$ and $\Delta$   
denote the gradient and the Laplacian for scalar functions, while 
$\Gr$, $\epseps$ and  $\dd$
denote 
the gradient, the symmetric gradient operator and the divergence operator,
whereas $\divv$ denotes the vector valued divergence operator for tensor fields. 
Furthermore for a scalar function $\phi$ and a vector field $\vv := (v_1, v_2)$ we set
$
\CC \phi := \left(\frac{\partial \phi}{\partial x_2}, - \frac{\partial \phi}{\partial x_1}\right)^T$ and 
$
\cc \vv := \frac{\partial v_2}{\partial x_1} - \frac{\partial v_1}{\partial x_2} \,.  
$
%
Throughout the paper, we will follow the usual notation for Sobolev spaces
and norms \cite{Adams:1975}.
Hence, for an open bounded domain $\omega$,
the norms in the spaces $W^r_p(\omega)$ and $L^p(\omega)$ are denoted by
$\|{\cdot}\|_{W^r_p(\omega)}$ and $\|{\cdot}\|_{L^p(\omega)}$ respectively.
Norm and seminorm in $H^{r}(\omega)$ are denoted respectively by
$\|{\cdot}\|_{r,\omega}$ and $|{\cdot}|_{r,\omega}$,
while $(\cdot,\cdot)_{\omega}$ and $\|\cdot\|_{\omega}$ denote the $L^2$-inner product and the $L^2$-norm (the subscript $\omega$ may be omitted when $\omega$ is the whole computational
domain $\Omega$).
We recall the following well known functional space which will be useful in the sequel
\[
    H(\dd, \Omega) := \{\vv \in [L^2(\Omega)]^2 \,\,\, 
    \text{s.t.} \,\,\,   
    \dd \, \vv \in L^2(\Omega)\} \,.
\]

For a Banach space $V$ we denote with $V'$ the dual space of $V$.
Let $(T_0, T_F) \subset \R$ denote the time interval. For space-time functions $v(\xx, t)$ defined on $\omega \times (T_0, T_F)$, we denote with $v_t$ the derivative with respect to the time variable. Furthermore, using standard notations \cite{quarteroni-valli:book}, for a Banach space $V$ with norm $\|\cdot\|_V$, we introduce the spaces $W^s_q(T_0, T_F, V)$ and $H^s(T_0, T_F, V)$  endowed with norms $\|{\cdot}\|_{W^s_q(T_0, T_F, V)}$ and $\|{\cdot}\|_{H^s(T_0, T_F, V)}$ respectively.
In similar way we consider the space $C^0((T_0, T_F), V)$.

\smallskip
Let now $\Omega \subseteq \R^2$ be a polygonal simply connected domain, let $T > 0$ be the final time and set $\I:= (0, T)$. 
We consider the unsteady Navier-Stokes equation 
(see for instance \cite{quarteroni-valli:book}) 

\begin{equation}
\label{eq:ns primale}
\left\{
\begin{aligned}
& \text{ find $(\uu,p)$ such that}\\
&\begin{aligned}
\uu_t -  \nu \, \divv (\epseps (\uu) ) + (\Gr \uu ) \,\uu    -  \nabla p &= \ff\qquad  & &\text{in $\Omega \times \I$,} \\
\dd \, \uu &= 0 \qquad & &\text{in $\Omega \times \I$,} \\
\uu &= 0  \qquad & &\text{on $\partial \Omega \times \I$,}
\\
\uu(\cdot, 0) &= \uu_0 
\qquad & &\text{in $\Omega$,}
\end{aligned}
\end{aligned}
\right.
\end{equation}
where $\uu$, $p$ are the velocity and the pressure fields respectively, 
$\nu \in \R^+$ represents the diffusive coefficient,
$\ff \in [L^2(\Omega \times I)]^2$ is the volume source term, 
and $\uu_0 \in H_0(\dd, \Omega)$ with $\dd \, \uu_0 = 0$ is the initial data.
 Note that here above we made some small modification to the equations in order to keep a smoother notation; indeed the $\nu$ above is double the standard one and we changed the sign of the pressure variable.
For the sake of simplicity we here  consider Dirichlet homogeneous boundary conditions, different boundary conditions can be treated as well. 
Let us define the continuous spaces
\[
\VCG := \left[ H_0^1(\Omega) \right]^2, \qquad 
\QCG:= L^2_0(\Omega) = \left\{ q \in L^2(\Omega) \quad \text{s.t.} \quad \int_{\Omega} q \,{\rm d}\Omega = 0 \right\} 
\]
endowed with natural norms, and the continuous forms
\begin{align}
\label{eq:forma a}
a(\cdot, \cdot) &\colon \VCG \times \VCG \to \R,    
&\,\,\,
a(\uu,  \vv) &:=  \int_{\Omega} \epseps(\uu) : \epseps (\vv) \, {\rm d}\Omega \,,
\\
\label{eq:forma b}
b(\cdot, \cdot) &\colon \VCG \times \QCG \to \R, 
&\,\,\,
b(\vv, q) &:=  \int_{\Omega}q\, \dd \vv \,{\rm d}\Omega \,,
\\
\label{eq:forma c}
c(\cdot; \, \cdot, \cdot) &\colon \VCG \times \VCG \times \VCG \to \R, 
&\,\,\,
c(\ww; \, \uu, \vv) &:=  \int_{\Omega} [( \Gr \uu ) \, \ww] \cdot \vv  \,{\rm d}\Omega \,,
\end{align}
for all $\uu, \vv, \ww \in \VCG$ and $q \in \QCG$.
Then the variational formulation of Problem~\eqref{eq:ns primale} reads as follows:

\begin{equation}
\label{eq:ns variazionale}
\left\{
\begin{aligned}
& \text{find $(\uu, p) \in L^2(0, T, \VCG) \times L^2(0, T, \QCG)$, with  $\uu_t \in L^2(0, T, \VCG')$, such that} \\
&\begin{aligned}
\int_0^T \bigl( (\uu_t, \vv) + \nu a(\uu, \vv) + c(\uu; \, \uu, \vv)  + b(\vv, p)\bigr) \,{\rm d}t &= \int_0^T (\ff, \vv) \,{\rm d}t \,\,\, & \text{$\forall \vv \in L^2(0,T, \VCG)$,} \\
\int_0^T  b(\uu, q) \,{\rm d}t &= 0 \,\,\, & \text{$\forall q \in L^2(0,T, \QCG)$,}
\\
\uu(0) &= \uu_0 
\,\,\, &\text{in $\Omega$.}
\end{aligned}
\end{aligned}
\right.
\end{equation}

In the context of the analysis of incompressible flows, it is useful to introduce the concept of the Helmholtz--Hodge projector (see for instance \cite[Lemma~2.6]{john-linke-merdon-neilan-rebholz:2017} and \cite[Theorem~3.3]{GLS:2019}).
For every $\ww \in [L^2(\Omega)]^2$ there exist $\ww_0 \in H({\rm div}; \, \Omega)$ and $\zeta \in H^1(\Omega) / \R$ such that
\begin{equation}
\label{eq:helmholtz-hodge}
\ww = \ww_0 + \nabla \, \zeta,
\end{equation}
where $\ww_0$ is $L^2$-orthogonal to the gradients, that is $(\ww_0 , \, \nabla \phi) = 0$ for all $\phi \in H^1(\Omega)$ (which implies, in particular, that $\ww_0$ is solenoidal, i.e. ${\rm div} \, \ww_0 = 0$).
The orthogonal decomposition \eqref{eq:helmholtz-hodge} is unique and is called Helmholtz--Hodge decomposition, and $\mathcal{H}(\ww) :=\ww_0$ is the Helmholtz--Hodge projector of $\ww$.
Let us introduce the kernel of the  bilinear form $b(\cdot,\cdot)$
that corresponds to the functions in $\VCG$ with vanishing divergence
\begin{equation}
\label{eq:Z}
\ZCG := \{ \vv \in \VCG \quad \text{s.t.} \quad \dd \, \vv = 0  \}\,.
\end{equation}
Then, Problem~\eqref{eq:ns variazionale} can be formulated in the equivalent kernel form:
\begin{equation}
\label{eq:ns variazionale ker}
\left\{
\begin{aligned}
& \text{find $\uu \in L^2(0, T, \ZCG)$, with  $\uu_t \in L^2(0, T, \ZCG')$, such that} \\
&\begin{aligned}
\int_0^T \bigl( (\uu_t, \vv) + \nu a(\uu, \vv) + c(\uu; \, \uu, \vv) \bigr) \,{\rm d}t &= \int_0^T (\mathcal{H}(\ff), \vv) \,{\rm d}t \,\,\, & \text{$\forall \vv \in L^2(0,T, \ZCG)$,} 
\\
\uu(0) &= \uu_0 
\,\,\, &\text{in $\Omega$.}
\end{aligned}
\end{aligned}
\right.
\end{equation}
By a direct computation, it is easy to check that the convective form  $c(\cdot; \, \cdot,  \cdot)$ satisfies 
\begin{equation}
\label{eq:c-skew}
c(\ww; \, \uu, \vv) = -c(\ww; \, \vv, \uu) \qquad \text{for all $\uu, \vv \in \VCG$, for all $\ww \in \ZCG$.}
\end{equation}

It is well known that,
discretizing problem \eqref{eq:ns variazionale} by standard inf-sup stable finite elements and  classical anti-symmetric formulations for the $c(\cdot;\cdot,\cdot)$ form, 
leads to instabilities when the convective term is dominant with respect to the diffusive term $\nu$  (see for instance \cite{BH:1982,FF:1982,HS:1990,TB:1996,BFH:2006}).
In such situations a stabilized form of the problem is required in order to prevent spurious oscillations that can completely spoil the numerical solution. 
In the following sections we propose a stabilized finite elements discretization for the Navier-Stokes equation.
Borrowing the ideas from \cite{HS:1990} and \cite{Linke:2020}, in the present contribution we consider a stabilizing
term that penalizes the equation for the vorticity, where pressure gradients are automatically eliminated.

\section{Notations and preliminaries}
\label{sec:notations}

We now introduce some basic tools and notations useful in the construction and the theoretical analysis of the proposed stabilized method.

\subsection{Space and time decompositions}
\label{sub:mesh}

Let $\{\Omega_h\}_h$ be a family of  conforming decompositions of $\Omega$ into triangular elements $E$ of diameter $h_E$. We denote by 
$h := \sup_{E \in \Omega_h} h_{E}$. 
We make the following mesh assumptions.

\begin{description}

\item[\textbf{(M1)}] The mesh family $\{ \Omega_h \}_h$ is shape regular: each element $E$ is star shaped with respect to a ball of radius $\rho_E$ and it exists a real constant $c_{\rm{M}1}$ such that $h_E \leq c_{\rm{M}1} \rho_E$ for all $E \in \{ \Omega_h \}_h$.
\item[\textbf{(M2)}] The mesh family $\{ \Omega_h \}_h$ is quasi-uniform: it exists a real constant $c_{\rm{M}2}$ such that $h \leq c_{\rm{M}2} h_E$ for all $E \in \{ \Omega_h \}_h$.
\end{description}
Condition \textbf{(M1)} is classical in FEM. Condition \textbf{(M2)}, which is needed for the theoretical analysis of the method, can be avoided at the price of an additional term in the convergence estimates, see Remark \ref{rem:no-quasi}.

For any $E \in \Omega_h$, $\nn_E$ and $\tf_E$ denote the outward normal vector and the tangent vector to $\partial E$ respectively.

Concerning the temporal discretization, we introduce a sequence of time steps $t_n = n \tau$, $n = 0, \dots, N+1$ with time step size $\tau$ and where $t_0 = 0$ and $t_{N+1} = T$. We set $I_n := (t_n, t_{n+1})$ and $\I^\tau := \{I_n\}_{n=0}^N$. 

\noindent
For $m \in \N$ we introduce the polynomial spaces 
\begin{itemize}
\item $\Pk_m(\omega)$: the set of polynomials on $\omega$ of degree $\leq m$, with $\omega$ a generic measurable set;
\item $\Pk_m(\Omega_h) := \{q \in L^2(\Omega) \quad \text{s.t} \quad q|_{E} \in  \Pk_m(E) \quad \text{for all $E \in \Omega_h$}\}$;
\item $\Pk_m(\I^\tau) := \{q \in L^2(I) \quad \text{s.t} \quad q|_{I_n} \in  \Pk_m(I_n) \quad \text{for all $I_n \in \I^\tau$}\}$.
\end{itemize}
For $s \in \R^+$ and  $p \in [1,+\infty]$ let us define the following broken Sobolev spaces:
\begin{itemize}
\item $W^s_p(\Omega_h) := \{v \in L^2(\Omega) \quad \text{s.t} \quad v|_{E} \in  W^s_p(E) \quad \text{for all $E \in \Omega_h$}\}$,
\end{itemize}
equipped with the broken norm and seminorm
\[
\begin{aligned}
\|v\|^p_{W^s_p(\Omega_h)} &:= \sum_{E \in \Omega_h} \|v\|^p_{W^s_p(E)}\,,
&\quad
|v|^p_{W^s_p(\Omega_h)} &:= \sum_{E \in \Omega_h} |v|^p_{W^s_p(E)}\,, 
&\qquad \text{if $1 \leq p < \infty$,}
\\
\|v\|_{W^s_\infty(\Omega_h)} &:= \max_{E \in \Omega_h} \|v\|_{W^s_\infty(E)}\,,
&\quad
|v|_{W^s_\infty(\Omega_h)} &:= \max_{E \in \Omega_h} |v|_{W^s_\infty(E)}\,, 
&\qquad \text{if $p= \infty$.}
\end{aligned}
\]
Analogously we define the spaces
\begin{itemize}
\item $W^s_p(\I^\tau) := \{v \in L^2(\I^\tau) \quad \text{s.t} \quad v|_{I_n} \in  W^s_p(I_n) \quad \text{for all $I_n \in \I^\tau$}\}$,
\end{itemize}
with the associated norm and seminorm.

\noindent
For any mesh edge $e$ let $\nn^e$ be a fixed unit normal vector to the edge $e$.
The jump operator on $e$ is defined for every piecewise continuous function w.r.t. $\Omega_h$ by
\[
\jump{w}_e(\xx) :=
\left \{
\begin{aligned}
& \lim_{s \to 0^+} \left( w(\xx - s \nn^e) 
- w(\xx + s \nn^e) \right) 
&\quad 
&\text{if $e \not \in \partial \Omega$} 
\\
&0
&\quad 
&\text{if $e \in \partial \Omega$.} 
\end{aligned}
\right. 
\]
Note that the boundary values are purposefully set to zero, which is different from the classical definition.
We introduce the following notation for the piecewise continuous functions w.r.t. the decomposition $\I^\tau$
\begin{itemize}
\item $C^0(\I^\tau) := \{q \in L^2(I) \quad \text{s.t} \quad q|_{I_n} \in  C^0(I_n) \quad \text{for all $I_n \in \I^\tau$}\}$.
\end{itemize}

%
For every function $w \in C^0(\I^\tau)$ we define
\[
w_+^n := \lim_{s \to 0^+} w(t_n+s)\,, 
\quad
w_-^{n+1} := \lim_{s \to 0^+} w(t_{n+1}-s)\,,
\qquad
\text{for $n=0, \dots, N$,} 
\]
and the (time) jumps
\[
\jump{w}_{t_n} := 
 w_+^n - w_-^n
\quad
\text{for $n=0, \dots, N$,}
\]
where $w_-^0$ will be explicitly defined at each occurrence.
If $w \in C^0(\I)$ (resp. $C^0(\overline{\I})$) we set $w^n := w(t_n)$ for $n=1, \dots, N$ (resp. $n=0, \dots, N+1$).

Furthermore we define the following \textbf{time interpolation operator}  $\mathcal{I}^\tau \colon C^0(\I^\tau) \to \Pk_1(I^\tau)$ that  associates to each piecewise continuous function $w$ the piecewise polynomial of degree one interpolating $w$, i.e. 
\[
\mathcal{I}^\tau w(t) = w^n_+ + \frac{t- t_n}{\tau}(w^{n+1}_- - w^n_+) 
\quad
\text{for $t \in I_n$ and  $n=0,\dots, N$.}
\]
%
Note that, if $w \in C^0(I)$ then also $\mathcal{I}^\tau w \in C^0(I)$.

\begin{rmk}[Generalized notation]
The definitions above of time jump and time interpolation can be naturally extended to piecewise continuous functions with values in a generic Banach space $V$, which we denote by $C^0(\I^\tau,V)$. In the following we will still denote such operators with the same symbols. Analogously, we will denote by $\Pk_\ell(\I^\tau,V)$, $\ell \in {\mathbb N}$, functions $\I \rightarrow V$ that are piecewise polynomial in time.
\end{rmk}

Finally, given $k \in \N$, we define the \textbf{space} $\boldsymbol{L^2}$\textbf{-projection operator} $\Pi_{k} \colon L^2(\Omega_h) \to \Pk_k(\Omega_h)$:
\begin{equation}
\label{eq:P0_k^E}
\int_{\Omega} q_k (w - \, {\Pi}_{k}  w) \, {\rm d} \Omega = 0 \qquad  \text{for all $w \in L^2(\Omega_h)$  and $q_k \in \Pk_k(\Omega_h)$.} 
\end{equation}

\subsection{Preliminary theoretical results} \label{sub:pre-theo}
%
In the present section we introduce some useful tools that we will adopt in the analysis.
Let us introduce the space of stream functions
\[
\PCG := H^2_0(\Omega) = 
\{\phi \in H^2(\Omega) \quad \text{s.t.} \quad \phi = 0 \,, \quad  \nabla \phi \cdot \nn = 0 \quad \text{on $\partial \Omega$} \}
\]
then the following sequence (in a simply connected domain $\Omega$)
is exact (see for instance \cite{guzman-neilan:2014}):
\begin{equation}
\label{eq:exact cont}
0 \, \xrightarrow[]{ \,\,\,\,\, \text{{$i$}} \,\,\,\,\,}  \,
\PCG \, \xrightarrow[]{   \quad \text{{$\CC$}} \quad  }\,
\VCG \, \xrightarrow[]{  \, \, \, \, \text{{$\dd$}} \, \, \, \, }\,
\QCG \, \xrightarrow[]{ \,\,\,\,\, 0 \,\,\,\,\,} \,
0 
\end{equation}
where $i$  denotes the mapping that to every real number $r$
associates the constant function identically equal to $r$
and we recall that a sequence is exact if the range of
each operator coincides with the kernel of the following one. 
In particular we have the following characterization of the kernel (cf. \eqref{eq:Z})
\begin{equation}
\label{eq:psi-z}
\CC(\PCG) = \ZCG \,.
\end{equation}
In the following the symbol $\lesssim$ will denote a bound up to a generic positive constant,
independent of the mesh size $h$, of the time step $\tau$ but which may depend on 
$\Omega$, on the ``polynomial'' order of the method $k$ and on the mesh shape regularity constants $c_{\rm{M}1}$ and $c_{\rm{M}2}$.

The following result is trivial to check.
\begin{lemma}[Stream functions]
\label{lm:stream}
Let $\ww \in \ZCG \cap [H^{s}(\Omega_h)]^2$ with $s \geq 1$.
Then there exists $\psi \in \PCG \cap H^{s+1}(\Omega_h)$ such that
\[
\CC \psi = \ww \,,
\quad \text{and} \quad
|\psi|_{s+1,\Omega_h} \lesssim |\ww|_{s,\Omega_h}  \,.
\]
\end{lemma}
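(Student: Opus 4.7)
The plan is to combine the exactness of the de Rham-type sequence \eqref{eq:exact cont} (which gives existence of a global $H^2_0$ stream function) with the elementwise identity $\Gr \psi = \ww^\perp$ (which transfers broken regularity from $\ww$ to $\psi$).

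First, I would invoke the exact sequence \eqref{eq:exact cont} (or equivalently the characterization \eqref{eq:psi-z}): since $\ww \in \ZCG$ and $\ZCG = \CC(\PCG)$, there exists $\psi \in \PCG = H^2_0(\Omega)$ with $\CC \psi = \ww$. In the simply connected setting this $\psi$ is unique (the boundary conditions in $\PCG$ fix the additive constant left free by $\CC$), which is convenient but not strictly needed for the estimate.

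Next I would localize on each $E \in \Omega_h$. Writing $\ww = (w_1,w_2)^T$, the identity $\CC \psi = \ww$ reads $\partial_{x_2}\psi = w_1$ and $\partial_{x_1}\psi = -w_2$ on $E$, that is
\[
\Gr \psi = \ww^\perp := (-w_2,\, w_1)^T \quad \text{on } E.
\]
Since $\ww \in [H^s(E)]^2$ with $s \geq 1$, this pointwise identity shows that every partial derivative of $\psi$ of order $s+1$ on $E$ coincides, up to a sign, with a partial derivative of order $s$ of a component of $\ww$. Therefore $\psi|_E \in H^{s+1}(E)$ and
\[
|\psi|_{s+1,E}^2 \;=\; |\Gr \psi|_{s,E}^2 \;=\; |\ww^\perp|_{s,E}^2 \;=\; |\ww|_{s,E}^2 .
\]

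Finally, summing over $E \in \Omega_h$ gives $\psi \in \PCG \cap H^{s+1}(\Omega_h)$ together with $|\psi|_{s+1,\Omega_h} \lesssim |\ww|_{s,\Omega_h}$, which is the claim. There is no real obstacle here: the whole content of the lemma is already encoded in the exact sequence plus the fact that $\CC$ is (up to a rotation) the gradient, so the ``proof'' is essentially a bookkeeping of seminorms elementwise. The only point worth double-checking is that the additive constant in $\psi$ does not interfere with the broken seminorm bound, which is immediate since $|\cdot|_{s+1,E}$ kills constants for $s \geq 0$.
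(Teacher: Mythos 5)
Your proof is correct and is precisely the ``trivial check'' the paper alludes to (the paper omits the proof entirely, stating only that the result is trivial): existence of $\psi \in \PCG$ follows from the exactness property \eqref{eq:psi-z}, and the elementwise identity $\Gr\psi = \ww^{\perp}$ transfers the broken $H^s$ regularity of $\ww$ to broken $H^{s+1}$ regularity of $\psi$ with the seminorm bound holding up to the multinomial constants in the seminorm convention. No gaps.
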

\noindent
We now mention a list of classical results (see for instance \cite{brenner-scott:book}) useful in the sequel.


\begin{lemma}[Trace inequality]
\label{lm:trace}
For any $E \in \Omega_h$ and for any  function $v \in H^1(E)$ it holds 
\[
\|v\|^2_{\partial E} \lesssim h_E^{-1}\|v\|^2_{E} + h_E\|\nabla v\|^2_{E} \,.
\]
\end{lemma}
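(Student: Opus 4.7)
The plan is to prove this by a standard scaling argument reducing to a reference element. Let $\widehat{E}$ be the reference triangle (say, the unit simplex). On $\widehat{E}$, the classical trace theorem asserts that
\[
\|\widehat{v}\|^2_{\partial \widehat{E}} \lesssim \|\widehat{v}\|^2_{\widehat{E}} + \|\widehat{\nabla}\widehat{v}\|^2_{\widehat{E}}
\qquad \text{for all } \widehat{v} \in H^1(\widehat{E}),
\]
with a constant depending only on $\widehat{E}$. I would take this as the starting ingredient.

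Next, I would pull $v$ back to $\widehat{E}$ using the affine map $F_E \colon \widehat{E} \to E$, $F_E(\widehat{\xx}) = B_E \widehat{\xx} + \mathbf{b}_E$, setting $\widehat{v} := v \circ F_E$. Under the shape-regularity assumption \textbf{(M1)}, one has
\[
|\det B_E| \sim h_E^2, \qquad \|B_E\| \sim h_E, \qquad \|B_E^{-1}\| \sim h_E^{-1},
\]
with hidden constants depending only on $c_{\rm{M}1}$. A direct change of variables on the volume and boundary integrals then yields
\[
\|v\|^2_E \sim h_E^2 \,\|\widehat{v}\|^2_{\widehat{E}}, \quad
\|v\|^2_{\partial E} \sim h_E \,\|\widehat{v}\|^2_{\partial \widehat{E}}, \quad
\|\nabla v\|^2_E \sim \|\widehat{\nabla}\widehat{v}\|^2_{\widehat{E}},
\]
where in the last relation the $h_E^{-2}$ coming from $B_E^{-T}$ is balanced by the $h_E^2$ of $|\det B_E|$.

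Finally I would insert these three scaling equivalences into the reference trace inequality, obtaining
\[
h_E^{-1}\|v\|^2_{\partial E} \;\lesssim\; h_E^{-2}\|v\|^2_E + \|\nabla v\|^2_E,
\]
and multiplying through by $h_E$ gives exactly the claimed estimate. The only mildly delicate point is keeping track of how the shape-regularity constant $c_{\rm{M}1}$ enters the bounds on $\|B_E\|$ and $\|B_E^{-1}\|$ (so that the final hidden constant is uniform over the mesh family); this is entirely standard, so there is no real obstacle here. In fact, given that the paper cites \cite{brenner-scott:book}, I would expect the authors to simply state the result without proof.
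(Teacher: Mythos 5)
Your scaling argument is correct: the pullback to the reference simplex, the trace theorem there, and the three norm equivalences (with the $h_E^{-2}\cdot h_E^2$ cancellation in the gradient term) combine exactly as you describe to give the stated bound, with a constant depending only on the shape-regularity constant $c_{\rm{M}1}$. As you anticipated, the paper does not prove this lemma at all --- it is listed among the ``classical results'' with a pointer to \cite{brenner-scott:book} --- and your argument is precisely the standard proof found there, so there is nothing to reconcile.
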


\begin{lemma}[Bramble-Hilbert]
\label{lm:bramble}
For any $E \in \Omega_h$ and for any  smooth enough function $\phi$ defined on $\Omega$, it holds 
\[
|\phi - \Pi_k \phi|_{W^m_p(E)} \lesssim h_E^{s-m} |\phi|_{W^s_p(E)} 
\qquad  \text{$s,m \in \N$, $m \leq s \leq k+1$, $p \in [1, \infty]$.}
\]
\end{lemma}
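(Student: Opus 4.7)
The result is classical, so I would follow the standard Bramble-Hilbert/Deny-Lions route combined with the fact that $\Pi_k$ is locally defined element by element. Since $\Pi_k$ preserves polynomials of degree $\leq k$, the plan is to insert a convenient polynomial approximant $q \in \Pk_{s-1}(E) \subseteq \Pk_k(E)$ (here the averaged Taylor polynomial of $\phi$ on $E$, which enjoys $W^s_p$-type estimates independently of $p$) and to estimate the two resulting pieces separately.

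First, I would record two ingredients, both standard under the shape regularity assumption \textbf{(M1)}:
\begin{itemize}
\item The classical Bramble-Hilbert/Deny-Lions estimate on a generic element: there exists $q \in \Pk_{s-1}(E)$ such that, for every $0\leq m \leq s$,
\[
|\phi - q|_{W^m_p(E)} \lesssim h_E^{s-m}\,|\phi|_{W^s_p(E)}.
\]
This follows by transporting to a reference element, applying the Deny-Lions lemma there, and scaling back using shape regularity.
\item Local $L^p$-stability and an inverse estimate for the $L^2$-projection: since on each $E$ the operator $\Pi_k|_E$ maps $L^2(E)\to \Pk_k(E)$ and the latter is a finite dimensional space on a shape regular element, one has $\|\Pi_k v\|_{L^p(E)} \lesssim \|v\|_{L^p(E)}$ and, for polynomials $r\in\Pk_k(E)$, $|r|_{W^m_p(E)} \lesssim h_E^{-m}\|r\|_{L^p(E)}$. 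Both bounds are obtained by scaling, equivalence of norms on $\Pk_k$, and Hölder's inequality to pass between $L^2$ and $L^p$ on the reference element.
\end{itemize}

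Next, using $\Pi_k q = q$ on each element (since $s-1 \leq k$), I would decompose
\[
\phi - \Pi_k \phi = (\phi - q) - \Pi_k(\phi - q) \qquad \text{on } E,
\]
and apply the triangle inequality in $|\cdot|_{W^m_p(E)}$. The first term is controlled directly by the Bramble-Hilbert estimate above. For the second term, I would first use the inverse estimate
\[
|\Pi_k(\phi - q)|_{W^m_p(E)} \lesssim h_E^{-m}\,\|\Pi_k(\phi - q)\|_{L^p(E)},
\]
then the $L^p$-stability of $\Pi_k$ to get $\|\Pi_k(\phi - q)\|_{L^p(E)} \lesssim \|\phi - q\|_{L^p(E)}$, and finally the Bramble-Hilbert estimate with $m=0$ to bound this by $h_E^s|\phi|_{W^s_p(E)}$. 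Summing the two contributions yields the claimed $h_E^{s-m}|\phi|_{W^s_p(E)}$ bound.

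There is no real obstacle here; the only point that requires some care is the $L^p$-stability of $\Pi_k$ for $p\neq 2$, which is not immediate from the definition of the projector but follows from the standard scaling argument on a shape regular element together with the equivalence of $L^p$ norms on the finite dimensional reference polynomial space. All the involved constants remain uniform over $\Omega_h$ thanks to assumption \textbf{(M1)} (assumption \textbf{(M2)} is not needed at this point).
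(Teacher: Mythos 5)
The paper offers no proof of this lemma: it is stated, together with the trace and inverse inequalities, as a classical result with a pointer to Brenner--Scott, so there is no argument of the authors' to compare yours against. Your proof is the standard one and is correct: the decomposition $\phi - \Pi_k\phi = (\phi-q) - \Pi_k(\phi-q)$ with a Deny--Lions/averaged-Taylor polynomial $q\in\Pk_{s-1}(E)\subseteq\Pk_k(E)$, followed by the inverse estimate and the local $L^p$-stability of $\Pi_k$, is exactly how this is established in the references. You correctly identify the only delicate point, namely the $L^p$-stability of the $L^2$-projection for $p\neq 2$; just note that for $p<2$ the phrase ``H\"older's inequality to pass between $L^2$ and $L^p$'' does not literally work (the embedding goes the wrong way for general $v\in L^p$), and one instead argues by duality, or by expanding $\Pi_k v=\sum_i (v,\psi_i)_E\,\psi_i$ in an $L^2(E)$-orthonormal polynomial basis and bounding each coefficient with H\"older against $\|\psi_i\|_{L^{p'}(E)}$; either route gives a constant depending only on $k$ and the shape-regularity constant, uniformly in $E$. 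With that small clarification the argument is complete, and you are right that only \textbf{(M1)}, not \textbf{(M2)}, is needed here.
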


\begin{lemma}[Inverse estimate]
\label{lm:inverse}
Let $k \in {\mathbb N}$. For any $E \in \Omega_h$, let $\gamma_{E}=\gamma_{E}(k,m,p)$, for $m \in \{0,1,\dots, k\}$ and $p \in [1,\infty]$, denote the smallest positive constant such that for any $p_k \in \Pk_k(E)$ 
\[
\|p_k\|_{W^m_p(E)} \leq \gamma_{E} h_E^{-m} \|p_k\|_{L^p(E)} \, .
\]
Then there exists $\gamma=\gamma(k) \in {\mathbb R}^+$ such that $\gamma_{E} \le \gamma$ for all $E \in \Omega_h$.
\end{lemma}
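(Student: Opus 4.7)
The plan is a classical scaling argument: transfer the estimate from a single reference element, on which the result follows from finite-dimensionality, to the physical element $E$ via an affine change of variables, using only shape regularity (assumption \textbf{(M1)}) to control the constants uniformly in the mesh family. Note that quasi-uniformity \textbf{(M2)} plays no role here, as the argument is entirely local to each $E$.

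First I would fix a reference triangle $\widehat{E}$, for instance the one with vertices $(0,0), (1,0), (0,1)$. Since $\Pk_k(\widehat{E})$ is a finite-dimensional vector space, the seminorms $|\cdot|_{W^j_p(\widehat{E})}$ for $j=0,\dots,m$ and the norm $\|\cdot\|_{L^p(\widehat{E})}$ are all equivalent on it, which yields a constant $\widehat{\gamma} = \widehat{\gamma}(k,m,p)$ with
\[
\|\widehat{p}\,\|_{W^m_p(\widehat{E})} \;\leq\; \widehat{\gamma} \, \|\widehat{p}\,\|_{L^p(\widehat{E})} \qquad \text{for every } \widehat{p} \in \Pk_k(\widehat{E}).
\]

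Next, for each $E \in \Omega_h$ I would introduce the affine bijection $F_E \colon \widehat{E} \to E$, $F_E(\widehat{\xx}) = B_E\widehat{\xx}+b_E$. Shape regularity \textbf{(M1)} yields the standard bounds $\|B_E\| \lesssim h_E$, $\|B_E^{-1}\| \lesssim h_E^{-1}$, and $|\det B_E| \simeq h_E^2$, where all hidden constants depend only on $c_{\rm M1}$. Given $p_k \in \Pk_k(E)$, I would set $\widehat{p}_k := p_k\circ F_E \in \Pk_k(\widehat{E})$; the change-of-variables formula for $L^p$ integrals introduces a factor $|\det B_E|^{1/p}$ on both sides that cancels, while each application of the chain rule in the $W^m_p$ seminorm produces a factor of $\|B_E^{-1}\|$. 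Combining these with the reference-element estimate and summing over $j=0,\dots,m$ gives
\[
\|p_k\|_{W^m_p(E)} \;\lesssim\; h_E^{-m} \, \|p_k\|_{L^p(E)},
\]
with a hidden constant depending only on $k$, $m$, $p$ and $c_{\rm M1}$ — so one may take $\gamma = \gamma(k,m,p)$ independent of $E$, which is the claim.

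There is no genuine obstacle in this argument; the only point that deserves attention is tracking the powers of $|\det B_E|$ and $\|B_E^{-1}\|$ under the pull-back so that shape regularity suffices to keep the constant uniform in $E$. The case $p=\infty$ is handled identically, with the trivial modification that the Jacobian factor from the change of variables disappears.
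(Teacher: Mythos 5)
Your scaling argument is the standard proof of this inverse estimate; the paper itself gives no proof but simply cites \cite{brenner-scott:book}, where exactly this reference-element/affine-map argument is used, so your approach matches the intended one. The only micro-detail worth a word is that bounding the lower-order seminorms $h_E^{-j}\|p_k\|_{L^p(E)}$, $j<m$, by $h_E^{-m}\|p_k\|_{L^p(E)}$ uses that $h_E$ is bounded above (by ${\rm diam}(\Omega)$), which is harmless since the constant is allowed to depend on $\Omega$.
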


\begin{rmk}
\label{rm:jumps}
We notice that for any $K \in [L^{\infty}(\Omega_h)]^{2 \times 2}$ and for any $\ww \in [H^{1}(\Omega_h)]^2$, Lemma \ref{lm:trace} yields the following estimate
\[
\begin{aligned}
\sum_{E \in \Omega_h} \|\jump{K \ww}\|^2_{\partial E}
&\lesssim  
\sum_{E \in \Omega_h} \|K\|^2_{L^\infty(E)} \|\ww\|^2_{\partial E}
\lesssim  
\sum_{E \in \Omega_h} \|K\|^2_{L^\infty(E)} \left( h_E^{-1} \|\ww\|^2_{E} + h_E \|\Gr \ww\|^2_{E} \right) \,.
\end{aligned}
\]
In particular if $\ww \in [\Pk_{k}(\Omega_h)]^2$ by Lemma \ref{lm:inverse} it holds that
\begin{equation}
\label{eq:utile-jump}
\sum_{E \in \Omega_h} \|\jump{K \ww}\|^2_{\partial E}
\lesssim  
\sum_{E \in \Omega_h}  h_E^{-1} \|K\|^2_{L^\infty(E)}   \|\ww\|^2_{E} \,.
\end{equation}
\end{rmk}
We omit the simple proof of the following generalized interpolation result.
\begin{lemma}[Time-interpolation]
\label{lm:time-interp}
Let $V$ be a Banach space. For any $I_n \in \I^\tau$ and for any smooth enough function $\phi \in C^0(\I^\tau,V)$, it holds 
\[
\|\phi - \mathcal{I}^\tau \phi\|_{H^r(I_n,V)} \lesssim \tau^{\ell-r} |\phi|_{H^\ell(I_n,V)} 
\qquad \ell \in \{1,2\} \, , \ r \in \{0,1\}  \, .
\]
\end{lemma}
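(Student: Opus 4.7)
The plan is a standard scaling argument combined with a Banach-valued Bramble–Hilbert/Taylor estimate on the reference interval $\hat I = (0,1)$. Since $\mathcal{I}^\tau$ is defined piecewise, I can work on one interval $I_n$ at a time.

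First, I would pull back to $\hat I$ via the affine map $t = t_n + \tau \hat t$, setting $\hat\phi(\hat t) = \phi(t_n + \tau \hat t)$. A direct change of variables gives the scaling identities
\[
|\phi|_{H^m(I_n,V)}^2 = \tau^{1-2m}\,|\hat\phi|_{H^m(\hat I,V)}^2,\qquad m \in \{0,1,2\},
\]
and, since the interpolation operator commutes with the affine change of variable, $\widehat{\mathcal{I}^\tau \phi} = \widehat{\mathcal{I}} \hat\phi$, where $\widehat{\mathcal{I}}$ is endpoint linear interpolation on $\hat I$. It therefore suffices to prove the reference estimate
\[
\|\hat\phi - \widehat{\mathcal{I}}\hat\phi\|_{H^r(\hat I,V)} \lesssim |\hat\phi|_{H^\ell(\hat I,V)},\qquad \ell \in \{1,2\},\ r\in\{0,1\},
\]
and then multiply by $\tau^{1/2-r}\cdot\tau^{\ell-1/2}=\tau^{\ell-r}$ to recover the claim.

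On the reference interval, the argument is essentially the scalar Bramble–Hilbert lemma transferred to the Banach-valued setting. The key observation is that $\widehat{\mathcal{I}}$ preserves $V$-valued polynomials of degree $\le 1$, so for $\ell=2$ the operator $(I-\widehat{\mathcal{I}})$ annihilates affine functions, and for $\ell=1$ it annihilates constants. I would then:
\begin{itemize}
\item For $\ell=1$, $r=0$: write, via the fundamental theorem of calculus in $V$,
\[
\hat\phi(\hat t) - \widehat{\mathcal{I}}\hat\phi(\hat t) = \int_0^{\hat t}\hat\phi'(s)\,ds - \hat t\int_0^1\hat\phi'(s)\,ds,
\]
and bound in $L^2(\hat I,V)$ using Cauchy–Schwarz.
\item For $\ell=1$, $r=0$ similarly, and for $\ell=2$ use the Peano-type kernel representation
\[
\hat\phi(\hat t) - \widehat{\mathcal{I}}\hat\phi(\hat t) = \int_0^1 K(\hat t,s)\,\hat\phi''(s)\,ds
\]
with $K$ the Green function of $-\partial_{\hat t}^2$ on $\hat I$ with zero boundary values, then estimate in the $V$-norm.
\item For the cases with $r=1$, differentiate once in $\hat t$ and repeat the same kernel/FTC argument on $\hat\phi' - (\widehat{\mathcal{I}}\hat\phi)'$, noting that $(\widehat{\mathcal{I}}\hat\phi)'$ is the constant mean value of $\hat\phi'$.
\end{itemize}

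There is really no hard step here; the only point that deserves a remark is that all the standard manipulations (FTC, integration by parts in the time variable, Cauchy–Schwarz) carry over verbatim to the $V$-valued setting thanks to Bochner integration, which is well-defined for $\phi \in C^0(\I^\tau,V)$ with the required Sobolev regularity. Putting the reference estimate together with the scaling identities yields the claimed bound
\[
\|\phi - \mathcal{I}^\tau\phi\|_{H^r(I_n,V)} \lesssim \tau^{\ell-r}\,|\phi|_{H^\ell(I_n,V)}.
\]
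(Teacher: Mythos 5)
Your argument is correct. Note that the paper deliberately omits this proof (``We omit the simple proof of the following generalized interpolation result''), so there is nothing to compare against; your scaling to the reference interval $\hat I=(0,1)$ combined with the Banach-valued FTC/Peano-kernel representations is exactly the standard route the authors have in mind, and all steps (including the exponent bookkeeping $\tau^{1/2-r}\cdot\tau^{\ell-1/2}=\tau^{\ell-r}$, which implicitly uses $\tau\lesssim 1$ to absorb the lower-order part of the $H^1(I_n,V)$ norm) check out. The only blemish is the duplicated case label ``$\ell=1$, $r=0$'' in your second bullet, which should read $\ell=2$, $r=0$.
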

We close the section with the following well known result.
\begin{proposition}[Gronwall lemma]
\label{prp:gronwall}
Let $\{y_n\}_{n \in \N}$,  $\{f_n\}_{n \in \N}$ and  $\{g_n\}_{n \in \N}$ be nonnegative sequences such that 
\[
y_n \leq f_n + \sum_{j=0}^{n-1} g_j y_j 
\qquad \text{for $n \geq 0$.}
\]
Then
\[
y_n \leq f_n + \sum_{j=0}^{n-1} f_j g_j \, \exp \left(\sum_{k=j+1}^{n-1} g_j \right)
\qquad \text{for $n \geq 0$.}
\]
\end{proposition}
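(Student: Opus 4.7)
The plan is to prove the estimate by induction on $n$, reducing the statement to a classical linear recurrence for the cumulative sum and then invoking the elementary bound $1+x \leq e^x$.

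First, I would set $S_n := \sum_{j=0}^{n-1} g_j y_j$, so the hypothesis reads $y_n \leq f_n + S_n$. The base case $n=0$ is immediate: the sum is empty, $S_0 = 0$, and the asserted bound collapses to $y_0 \leq f_0$, which is exactly the hypothesis. For the inductive step I would observe
\[
S_{n+1} = S_n + g_n y_n \leq S_n + g_n (f_n + S_n) = (1 + g_n)\, S_n + g_n f_n,
\]
which is a scalar linear recurrence in $S_n$. Unrolling this recurrence (a straightforward induction on $n$) yields
\[
S_n \leq \sum_{j=0}^{n-1} f_j g_j \prod_{k=j+1}^{n-1}(1 + g_k),
\]
with the usual convention that the empty product equals $1$.

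To finish, I would apply the elementary inequality $1 + x \leq e^x$, valid for $x \geq 0$, termwise to the product and use that the exponential of a sum is the product of the exponentials, obtaining
\[
\prod_{k=j+1}^{n-1}(1 + g_k) \leq \exp\!\left(\sum_{k=j+1}^{n-1} g_k\right).
\]
Substituting this into the bound on $S_n$ and then into $y_n \leq f_n + S_n$ produces exactly the claimed estimate. There is no real obstacle here: the only mild care required is the bookkeeping of empty sums/products at the initial index and the correct indexing of the exponential factor, so that the estimate coincides with the classical discrete Gronwall inequality as stated.
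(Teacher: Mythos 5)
Your proof is correct and entirely standard: the paper states this discrete Gronwall inequality as a well-known result and provides no proof of its own, so there is nothing to compare against beyond noting that your argument (unrolling the linear recurrence for the cumulative sum $S_n$ and applying $1+x\le e^x$ termwise) is the classical one. One small observation: your derivation actually yields the exponent $\sum_{k=j+1}^{n-1} g_k$, which corrects an evident index typo in the paper's statement, where the summand is written as $g_j$ under a sum over $k$.
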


\section{Stabilized method}
\label{sec:stab}

In the present section we describe the proposed method and the associated consistency and stability results.

\subsection{Finite Elements and Discontinuous Galerkin time method}
\label{sub:fem}

Let us consider a family of conforming finite element spaces
\begin{equation}
\label{eq:spaces}
\PDG \subset \PCG \,,
\quad
\VDG \subset \VCG \,,
\quad
\QDG \subset \QCG
\end{equation}
satisfying the following properties:

\begin{description}

\item[\textbf{(P1) sub-complex structure.}]
The FEM couple $(\VDG,\QDG)$ is inf-sup stable \cite{boffi-brezzi-fortin:book} and the FEM 
spaces \eqref{eq:spaces} compose the sub-complex (cf. \eqref{eq:exact cont}) \cite{john-linke-merdon-neilan-rebholz:2017}
\begin{equation}
\label{eq:exact disc}
0 \, \xrightarrow[]{ \,\,\,\,\, \text{{$i$}} \,\,\,\,\,}  \,
\PDG \, \xrightarrow[]{   \quad \text{{$\CC$}} \quad  }\,
\VDG \, \xrightarrow[]{  \, \, \, \, \text{{$\dd$}} \, \, \, \, }\,
\QDG \, \xrightarrow[]{ \,\,\,\,\, 0 \,\,\,\,\,} \,
0 \,.
\end{equation}

\item[\textbf{(P2) polynomial inclusions.}] $[\Pk_k(\Omega_h)]^2 \subseteq \VDG \subseteq [\Pk_{k+3}(\Omega_h)]^2$, for some $k \in {\mathbb N}$.

\item[\textbf{(P3) interpolation estimates.}]  The following estimates hold
\begin{itemize}
\item for any $\phi \in \PCG \cap H^{s+2}(\Omega_h)$ with $0 < s \leq k$, there exists $\phi_h \in \PDG$ s.t.
\begin{equation} 
\label{eq:int est 1}
\|\phi - \phi_h\|_{E}  + 
h_E |\phi - \phi_h|_{1,E}  +
h_E^2 |\phi - \phi_h|_{2,E} 
\lesssim
h_E^{s+2} |\phi|_{s+2, E} 
\end{equation}
\item for any $\vv \in \VCG \cap [H^{s+1}(\Omega_h)]^2$ with $0< s \leq k$, there exists $\vv_h \in \VDG$ s.t.
\begin{equation} 
\label{eq:int est 2-a}
\|\vv - \vv_h\|_{E}  + 
h_E |\vv - \vv_h|_{1,E}  
\lesssim
h_E^{s+1} |\vv|_{s+1, E} 
\end{equation}
\item for any $q \in \QCG \cap H^{s}(\Omega_h)$ with $0 < s \leq k$, there exists $q_h \in \QDG$ s.t.
\begin{equation} 
\label{eq:int est 3}
\|q - q_h\|_{E}   
\lesssim
h_E^{s} |q|_{s, E} 
\end{equation}
\end{itemize}
for any $E \in \Omega_h$.
\end{description}

\noindent
Consider the discrete kernel
\begin{equation}
\label{eq:ZDG}
\ZDG := \{\vv_h \in \VDG \,\,\, \text{s.t.}  \,\,\, b(\vv_h, q_h) = 0 \,\,\, \text{for all $q_h \in Q_h$} \} 
\end{equation}
then, as a consequence of \textbf{(P1)}--\textbf{(P3)} the following hold

\begin{description}

\item[\textbf{(P4) kernel inclusion.}] The discrete kernel satisfies the inclusion $\ZDG \subset \ZCG$.

\item[\textbf{(P5) kernel characterization.}] The discrete kernel is such that $\ZDG = \CC (\PDG)$.

\item[\textbf{(P6) kernel interpolation estimates.}]  for any $\vv \in \ZCG \cap H^{s+1}(\Omega_h)$ with $0< s \leq k$, there exists $\vv_h \in \ZDG$ s.t. for any $E \in \Omega_h$
\begin{equation} 
\label{eq:int est 2}
\|\vv - \vv_h\|_{E}  + 
h_E |\vv - \vv_h|_{1,E}  
\lesssim
h_E^{s+1} |\vv|_{s+1,E} \,.
\end{equation}
\end{description}

\noindent
In the following we also assume that the spaces $\PDG , \VDG, \QDG$ have a local basis, a condition which is trivially satisfied by (essentially) all finite element spaces in the literature.
Let us define the following $L^2$-projection operator
${\Pi}_h \colon \ZCG \to \ZDG$ 
\begin{equation}
\label{eq:P0_Z}
\int_{\Omega} \ww_h
(\vv - \, {\Pi}_h  \vv) \, {\rm d} \Omega = 0 \qquad  \text{for all $\vv \in \ZCG$  and $\ww_h \in  \ZDG$.} 
\end{equation}
The following global approximation result is not trivial, but follows from \eqref{eq:int est 2} and classical techniques; we report the proof briefly for completeness. Note that mesh Assumption \textbf{(M2)} is important here.

\begin{lemma}
\label{lm:proj-bis}
For any smooth enough function $\vv \in \ZCG$, it holds 
\begin{align}
\label{eq:bHm}
|\vv - \Pi_h \vv|_{H^m(\Omega_h)} &\lesssim h^{s+1-m} |\vv|_{H^{s+1}(\Omega_h)} 
\qquad  &\text{$s,m \in \N$, $m \leq s+1$, $ 0 < s \leq k$,}
\\
\label{eq:bWminf}
|\vv - \Pi_h \vv|_{W^m_{\infty}(\Omega_h)} &\lesssim h^{s+1-m} 
|\vv|_{W^{s+1}_{\infty}(\Omega_h)} 
\qquad  &\text{$s,m \in \N$, $m \leq s+1$, $ 0 < s \leq k$.}
\end{align}
\end{lemma}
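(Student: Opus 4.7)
The plan is to apply the triangle inequality with respect to a local interpolant $\vv_I \in \ZDG$ provided by \eqref{eq:int est 2}, writing
\begin{equation*}
|\vv - \Pi_h \vv|_X \leq |\vv - \vv_I|_X + |\vv_I - \Pi_h \vv|_X
\end{equation*}
for $X = H^m(\Omega_h)$ or $X = W^m_\infty(\Omega_h)$. The first summand is bounded elementwise by the standard Bramble--Hilbert scaling argument underlying (P6), which in fact yields the natural estimates in every $H^j$- or $W^j_\infty$-seminorm up to order $s+1$. The second summand is controlled by the polynomial inverse estimate (Lemma \ref{lm:inverse}) combined with quasi-uniformity (M2): since $\vv_I - \Pi_h \vv \in \ZDG \subset [\Pk_{k+3}(\Omega_h)]^2$ by (P2), one has $|\vv_I - \Pi_h \vv|_{H^m(\Omega_h)} \lesssim h^{-m}\|\vv_I - \Pi_h \vv\|$, and analogously for the $W^m_\infty$-seminorm in terms of $\|\vv_I - \Pi_h \vv\|_{L^\infty(\Omega_h)}$.

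It then remains to control these global norms of $\vv_I - \Pi_h \vv$. For \eqref{eq:bHm} this is immediate from the $L^2$-orthogonal projection property: since $\vv_I \in \ZDG$, $\|\vv - \Pi_h \vv\| \leq \|\vv - \vv_I\|$, so the triangle inequality and \eqref{eq:int est 2} give $\|\vv_I - \Pi_h \vv\| \leq 2\|\vv - \vv_I\| \lesssim h^{s+1}|\vv|_{H^{s+1}(\Omega_h)}$, and the two pieces combine to \eqref{eq:bHm}. For \eqref{eq:bWminf}, I would exploit the identity $\vv_I - \Pi_h \vv = \Pi_h(\vv_I - \vv)$ (using $\Pi_h \vv_I = \vv_I$) together with the $L^\infty$-stability of the $L^2$-projection onto a continuous finite element space on a quasi-uniform mesh, a classical result under (M2). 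This delivers $\|\vv_I - \Pi_h \vv\|_{L^\infty(\Omega_h)} \lesssim \|\vv - \vv_I\|_{L^\infty(\Omega_h)} \lesssim h^{s+1}|\vv|_{W^{s+1}_\infty(\Omega_h)}$, closing the argument.

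The hardest step is precisely the $L^\infty$-stability of $\Pi_h$: being only $L^2$-orthogonal, its $L^\infty$-boundedness is not automatic and relies crucially on the quasi-uniformity (M2). The naive fallback of passing through the global $L^2$-bound via the polynomial inverse inequality $\|p_h\|_{L^\infty(E)} \lesssim h_E^{-1}\|p_h\|_{L^2(E)}$ (in 2D) loses an extra factor of $h$ and would yield only the suboptimal rate $h^{s-m}$ instead of the claimed $h^{s+1-m}$, so a genuine $L^\infty$-stability argument is really needed.
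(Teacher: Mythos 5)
Your argument follows essentially the same route as the paper: triangle inequality through an intermediate discrete function, polynomial inverse estimates plus quasi-uniformity \textbf{(M2)} for the discrete part, and --- for \eqref{eq:bWminf} --- the $L^\infty$-stability of the $L^2$-projection on quasi-uniform meshes (the Douglas--Dupont--Wahlbin result), which you correctly single out as the crux; the paper invokes exactly this reference. Two details deserve more care, though both are easily patched. First, \textbf{(P6)} only guarantees estimates for the interpolant in the $L^2$-based norms with $m\in\{0,1\}$, so your claim that $\vv_I$ satisfies optimal bounds in \emph{every} $H^j$- and $W^j_\infty$-seminorm up to order $s+1$ does not follow from the stated abstract assumptions; the paper avoids this by routing the local estimates through the piecewise $L^2$-projection $\Pi_k$, for which Lemma \ref{lm:bramble} covers all seminorms and all $p$, and by absorbing the resulting mixed $L^2$/$L^\infty$ terms via a H\"older inequality on each element. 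Second, $\Pi_h$ projects onto $\ZDG$, which is \emph{not} a standard finite element space, so the classical $L^\infty$-stability result does not apply off the shelf: one must first observe that $\ZDG=\CC(\PDG)$ inherits a local basis from $\PDG$, so that (after a scaling argument under \textbf{(M2)}) the hypotheses of the stability theorem are met. With these two points supplied, your proof is sound and coincides in substance with the paper's.
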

\begin{proof}
. \,
We first analyse the bound \eqref{eq:bHm}. Note that clearly, recalling {\bf (P6)}, 
\begin{equation}\label{new-1}
\|\vv - \Pi_h\vv\|  \lesssim h^{s+1} |\vv|_{H^{s+1}(\Omega_h)} \,.
\end{equation}
We now write
\[
\begin{aligned}
& |\vv - \Pi_h \vv|_{H^m(\Omega)}^2 =
\sum_{E \in \Omega_h} |\vv - \Pi_h \vv|_{m,E}^2  
\lesssim
\sum_{E \in \Omega_h} \left( |\vv - \Pi_k \vv|_{m,E}^2 +
|\Pi_h \vv - \Pi_k \vv|_{m,E}^2 \right)
\\
&
 \begin{aligned}
&  \lesssim
\sum_{E \in \Omega_h} \left( h_E^{2(s+1-m)} |\vv|_{s+1,E}^2 + 
|\Pi_k \vv - \Pi_h \vv |_{m,E}^2 \right)
\quad
& \text{(by Lemma \ref{lm:bramble})}
\\
& \lesssim
\sum_{E \in \Omega_h} \left(  h_E^{2(s+1-m)} |\vv|_{s+1,E}^2 + 
h_E^{-2m}\|\Pi_k \vv - \Pi_h \vv \|_{E}^2 \right)
\quad
& \text{(by  \textbf{(P2)} \& Lemma \ref{lm:inverse})}
\\
& \lesssim
\sum_{E \in \Omega_h} \left( h_E^{2(s+1-m)} |\vv|_{s+1,E}^2 + 
h_E^{-2m}\|\vv - \Pi_h \vv \|_{E}^2 \right)
\quad
& \text{(by a tri.ineq. \& Lemma \ref{lm:bramble})}
\\
& \lesssim
h^{2(s+1-m)} |\vv|_{H^{s+1}(\Omega_h)}^2 + 
h^{-2m} \| \vv  - \Pi_h \vv \|^2 
\quad
& \text{(by \textbf{(M2)})}
\\
& \lesssim
h^{2(s+1-m)} |\vv|_{H^{s+1}(\Omega_h)}^2 \, .
\quad
& \text{(by \eqref{new-1})}
\end{aligned}
\end{aligned}
\]
Concerning bound \eqref{eq:bWminf}, we start observing that the space $\ZDG$ has a local basis, simply obtained by applying the $\CC$ operator to the (local) basis of $\PDG$. By a scaling argument, and recalling assumption \textbf{(M2)}, it is easy to check that all conditions in \cite{Douglas:1974} are satisfied by $\ZDG$. Therefore, by applying \cite{Douglas:1974} we obtain
$$
\|\vv - \Pi_h\vv\|_{L^\infty(\Omega)}  \lesssim 
\min_{\ww \in \ZDG} \|\vv - \ww\|_{L^\infty(\Omega)}  
\leq \|\vv - \vv_h\|_{L^\infty(\Omega)}  \, ,
$$
where $\vv_h$ is the approximant introduced in {\bf (P6)}.
The remaining steps are fairly standard: we first apply two triangle inequalities combined with an inverse estimate, then Lemma \ref{lm:bramble} and {\bf (P6)}, finally the Holder inequality. 
We obtain
\begin{equation}\label{new-2}
\begin{aligned}
& \|\vv - \Pi_h\vv\|_{L^\infty(\Omega)}  
 \lesssim \max_{E \in \Omega_h} \big( \|\vv - \Pi_k \vv \|_{L^\infty(E)} + h_E^{-1} \| \Pi_k \vv - \vv \|_{L^2(E)} 
+ h_E^{-1} \| \vv - \vv_h\|_{L^2(E)} \big) \\
& \lesssim \max_{E \in \Omega_h} \big( h_E^{s+1} |\vv |_{W^{s+1}_\infty(E)} + h_E^{s} | \vv |_{s+1,E} \big) 
\lesssim \max_{E \in \Omega_h} h_E^{s+1} |\vv |_{W^{s+1}_\infty(E)}
\leq h^{s+1} |\vv|_{W^{s+1}_{\infty}(\Omega_h)} \,.
\end{aligned}
\end{equation}
Extending the above approximation results from the $L^\infty(\Omega)$ norm to the $W^m_\infty(\Omega)$ norm follows the same arguments as for the $p=2$ case shown above.
\end{proof}

In order to define a fully discrete scheme let us consider the time DG of polynomial degree one.
Therefore we define the discrete spaces
\begin{eqnarray}
\label{eq:disc spaces}
& \VDGt := \Pk_1(\I^\tau, \VDG)  \, , \quad
\QDGt := \Pk_1(\I^\tau, \QDG) \, , \quad
\ZDGt := \Pk_1(\I^\tau, \ZDG)  \, .
\end{eqnarray}

\noindent
Let us introduce the projection $\mathcal{P} \colon C^0(\I^\tau, \ZCG)\to \ZDGt$ defined by 
\begin{equation}
\label{eq:calP}
\mathcal{P}\vv := \mathcal{I}^\tau (\Pi_h \vv) =
\Pi_h(\mathcal{I}^\tau \vv) \quad 
\text{for all $\vv \in C^0(\I^\tau, \ZCG)$.}
\end{equation}

\begin{lemma}
\label{lm:P}
For any $I_n \in \I^\tau$, for any smooth enough function $\vv$ it holds
\[
|\vv - \mathcal{P}\vv|_{H^r(t_n, t_{n+1}, W^m_p(\Omega_h))}
\lesssim
\tau^{\ell-r} |\vv |_{H^{\ell}(t_n, t_{n+1}, W^{m}_p(\Omega_h))} +
h^{s+1-m} |\vv|_{H^{\overline r}(t_n, t_{n+1}, W^{s+1}_p(\Omega_h))}
\]
for all $r$, $s$, $m$, $l \in \N$, with $1 \le \ell \le 2$, $0 \leq r \leq \ell$ and 
, $m \leq s+1 \leq k+1$, $p \in \{2, \infty \}$. 
Here above $\overline{r}=\max{\{r,1\}}$.
\end{lemma}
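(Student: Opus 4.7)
The plan is to split the error by inserting the intermediate quantity $\mathcal{I}^\tau\vv$ and exploit the commutativity $\mathcal{P} = \mathcal{I}^\tau\Pi_h = \Pi_h\mathcal{I}^\tau$ already recorded in \eqref{eq:calP}. Writing
\[
\vv - \mathcal{P}\vv \;=\; \bigl(\vv - \mathcal{I}^\tau\vv\bigr) \;+\; (I - \Pi_h)\,\mathcal{I}^\tau\vv \;=:\; (T_1) + (T_2),
\]
the first summand is a pure time-interpolation error, while the second is a purely spatial projection error applied to the (piecewise linear-in-time) function $\mathcal{I}^\tau\vv$.

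For the term $(T_1)$, Lemma \ref{lm:time-interp} taken with Banach space $V = W^m_p(\Omega_h)$ gives directly
\[
\bigl|(T_1)\bigr|_{H^r(I_n, W^m_p(\Omega_h))} \;\lesssim\; \tau^{\ell-r}\,|\vv|_{H^\ell(I_n, W^m_p(\Omega_h))},
\]
which is exactly the first addend on the right hand side of the claim. For $(T_2)$ the purely spatial nature of $\Pi_h$ implies that it commutes with $\partial_t$, so $\partial_t^r (T_2) = (I - \Pi_h)\,\partial_t^r\mathcal{I}^\tau\vv$. Applying Lemma \ref{lm:proj-bis} pointwise in $t$ (which covers both $p = 2$ and $p = \infty$) to $\partial_t^r\mathcal{I}^\tau\vv(t) \in \ZCG$ and then integrating in time yields
\[
\bigl|(T_2)\bigr|_{H^r(I_n, W^m_p(\Omega_h))} \;\lesssim\; h^{s+1-m}\,\|\partial_t^r\mathcal{I}^\tau\vv\|_{L^2(I_n, W^{s+1}_p(\Omega_h))}.
\]

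The remaining and most delicate step is to bound the latter quantity by an $H^{\overline r}$ norm of $\vv$ itself. For $r \ge 1$, since $\mathcal{I}^\tau\vv$ is polynomial of degree one on $I_n$, one has $\partial_t\mathcal{I}^\tau\vv = (\vv^{n+1}_- - \vv^n_+)/\tau = \tau^{-1}\int_{I_n}\partial_t\vv\, dt$ (constant in $t$) and $\partial_t^2\mathcal{I}^\tau\vv \equiv 0$; Cauchy--Schwarz combined with the fact that the $L^2(I_n)$-norm of a constant scales as $\sqrt{\tau}$ then produces $\|\partial_t\mathcal{I}^\tau\vv\|_{L^2(I_n, V)} \le |\vv|_{H^1(I_n, V)}$, matching $\overline r = r = 1$; the case $r = 2$ is trivial since $(T_2)$ itself vanishes. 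For $r = 0$ one estimates $\|\mathcal{I}^\tau\vv\|_{L^2(I_n, V)}^2 \lesssim \tau\,(\|\vv^n_+\|_V^2 + \|\vv^{n+1}_-\|_V^2)$ and controls the pointwise endpoint values through the Sobolev embedding $H^1(I_n, V) \hookrightarrow C^0(\overline{I_n}, V)$ in the quantitative form $\|\vv\|_{L^\infty(I_n,V)}^2 \lesssim \tau^{-1}\|\vv\|_{L^2(I_n,V)}^2 + \tau\,|\vv|_{H^1(I_n,V)}^2$. This is exactly the step that forces the index $\overline r = \max\{r,1\}$ (rather than $r$) on the spatial term of the estimate, and it concludes the proof.
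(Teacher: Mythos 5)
Your decomposition $\vv-\mathcal{P}\vv=(\vv-\mathcal{I}^\tau\vv)+(I-\Pi_h)\mathcal{I}^\tau\vv$ is, via the commutativity in \eqref{eq:calP}, exactly the splitting used in the paper, and the two ingredients (Lemma \ref{lm:time-interp} for the first piece, Lemma \ref{lm:proj-bis} applied pointwise in time to $\partial_t^r\mathcal{I}^\tau\vv$ for the second) are the same; the proof is correct and essentially identical. The only difference is that you verify the $L^2(I_n)$-stability of $\partial_t^r\mathcal{I}^\tau\vv$ by hand (endpoint values plus a discrete trace inequality for $r=0$, the mean-value representation for $r=1$) where the paper simply invokes Lemma \ref{lm:time-interp} again via a triangle inequality — both arguments yield, for $r=0$, a bound through the full $L^2(I_n,W^{s+1}_p)$-in-time norm of $\vv$ rather than the bare $H^{\overline r}$ seminorm, so your reading of $\overline{r}=\max\{r,1\}$ is consistent with the paper's.
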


\begin{proof}
. \,
From triangular inequality and definition \eqref{eq:calP} we infer
\[
|\vv - \mathcal{P}\vv|_{H^r(t_n, t_{n+1}, W^m_p(\Omega_h))}
\leq
|\vv - \mathcal{I}^\tau\vv|_{H^r(t_n, t_{n+1}, W^m_p(\Omega_h))}
+
|\mathcal{I}^\tau(\vv - \Pi_h\vv)|_{H^r(t_n, t_{n+1}, W^{m}_p(\Omega_h))}\,.
\]
The first term is bounded by applying Lemma \ref{lm:time-interp} with $V=W^m_p(\Omega_h)$.
For the second term 
\[
\begin{aligned}
&|\mathcal{I}^\tau(\vv - \Pi_h\vv)|^2_{H^r(t_n, t_{n+1}, W^{m}_p(\Omega_h))}
\lesssim
\int_{t_n}^{t_{n+1}}\left|\frac{\partial^r}{\partial t^r}(\mathcal{I}^\tau\vv(\cdot, t) - \Pi_h\mathcal{I}^\tau \vv(\cdot, t))\right|^2_{W^{m}_p(\Omega_h)} \,{\rm d}t
\\
& \qquad 
\begin{aligned}
& \lesssim
\int_{t_n}^{t_{n+1}} h^{2(s+1-m)} \left|\frac{\partial^r}{\partial t^r}\mathcal{I}^\tau \vv(\cdot, t)\right|^2_{W^{s+1}_p(\Omega_h)} \,{\rm d}t
\quad & \text{(by Lemma \ref{lm:proj-bis})}
\\
& \lesssim
h^{2(s+1-m)} |\vv|^2_{H^{\bar r}(t_n, t_{n+1}, W^{s+1}_p(\Omega_h))} \, .
\quad & \text{(by Lemma \ref{lm:time-interp})}
\end{aligned}
\end{aligned}
\]
\end{proof}

\begin{rmk}
Some finite element complexes (see for instance Fig. 4.1 in \cite{john-linke-merdon-neilan-rebholz:2017}) are based on macro-elements. As a consequence, properties \eqref{eq:int est 1} and \eqref{eq:int est 2} will hold on each macro-element instead of each element. Since the number of elements in each macro-element is uniformly bounded and they all have equivalent diameter, the previous results trivially extend also to such case.
\end{rmk}

Let $\uht_0 \in \VDG$ a suitable approximation of the initial data $\uu_0$. 
Then referring to the spaces \eqref{eq:disc spaces} and employing the DG time stepping method \cite[Chapter 12]{tohomee:book}, the fully discrete scheme for the Navier-Stokes equation \eqref{eq:ns variazionale} is given by
\begin{equation}
\label{eq:ns fem}
\left\{
\begin{aligned}
& \text{find $(\uht, \pht) \in \VDGt \times \QDGt$ such that for $n=0, \dots, N$} \\
&\begin{aligned}
\int_{I_n }(\uht_t, \vht) \, {\rm d}t +
 \int_{I_n} \nu  a(\uht, \vht) \,{\rm d}t + 
\int_{I_n} c(\uht; \, \uht, \vht) \,{\rm d}t  &+
\\ 
+\int_{I_n}b(\vht, \pht) \,{\rm d}t  + 
(\jump{\uht}_{t_n}, \vht^n_+) &=
\int_{I_n}(\ff, \vht)\,{\rm d}t \,\,\, & \text{$\forall \vht \in \Pk_1(I_n, \VDG)$,} \\
\int_{I_n}b(\uht, \qht) \,{\rm d}t &= 0 \,\,\, & \text{$\forall \qht \in \Pk_1(I_n, \QDG)$,}
\\
\uht^0_- &= \uht_0 \,.
\end{aligned}
\end{aligned}
\right.
\end{equation}
Notice that as a consequence of property \textbf{(P4)} the discrete solution $\uht$ of \eqref{eq:ns fem} is exactly divergence-free.

\subsection{Stabilization of the vorticity equation}
\label{sub:method}

It is well known that problem  \eqref{eq:ns fem}, although guaranteeing a divergence-free and pressure robust approximation,
would lead to a sub-optimal order of error reduction $O(h^k)$ in the $L^\infty(0,T;L^2(\Omega))$ norm. Furthermore, no direct control on the convection would be given by the error estimates.
As discussed for instance in \cite{garcia}, stabilized schemes can reach a better order reduction rate in the convection dominated regime, and in addition gain control on a stronger norm including convection-type terms.
In the following section we present the vorticity stabilized approach for the advection dominated Navier-Stokes equation. 
In order for our method to be well defined, we need \emph{only} the additional condition on the data $\cc\ff \in L^2(0, t_{N+1}, L^2(\Omega))$.
Nevertheless, we now require the following more stringent assumptions, that are needed to define the stabilized continuous problem below. We assume that the velocity solution $\uu$ of \eqref{eq:ns primale} satisfies the following

\noindent
\textbf{(A1) Regularity Assumption for the consistency of the method.} 
\begin{equation*}
\label{eq:regu}
\begin{aligned}
i) &\,\, \cc \uu_t \in  L^2(0, t_{N+1}, [L^2(\Omega)])\,,
&\quad
ii) &\,\, \uu \in L^2(0, t_{N+1},  [H^{3}(\Omega_h)]^2)\,,
\\
iii) &\,\, \uu \in C^0((0, t_{N+1}), \ZCG) \,,
&\quad
iv) &\,\, \uu \in L^2(0, t_{N+1}, [H^{3/2+\epsilon}(\Omega)]^2)\,,
\end{aligned}
\end{equation*}
for some $\epsilon >0$. 

\noindent
Then we consider the so-called vorticity equation on each element of $\Omega_h$, i.e.
\begin{equation}
\label{eq:ns residuo}
\cc(\uu_t  + (\Gr \uu ) \,\uu)   + 
 \cc(-\nu \, \divv (\epseps (\uu)))  
=  \cc \ff
\qquad 
\text{for all $E \in \Omega_h$, for a.e. $t \in I$.}
\end{equation}
Clearly the vorticity equation has the advantage that 
the gradient of the pressure disappears, therefore in the resulting numerical scheme the stabilization associated with \eqref{eq:ns residuo}, differently from the classical velocity-pressure stabilization, does not spoil the benefits related to the divergence-free property.

Following the approach introduced in \cite{Linke:2020} for the stationary Oseen problem, we propose a SUPG vorticity stabilization, that is based on the residual-based stabilization of the vorticity equation \eqref{eq:ns residuo}.
For all sufficiently regular functions let us introduce the following stabilizing forms and the stabilizing right hand side:
\begin{align}
\label{eq:CtE}
\CstabG(\ww_1, \ww_2; \, \uu, \vv) &:= \sum_{E \in \Omega_h}
\delta_E \int_E \cc(\uu_t + (\Gr \uu)\ww_1) \, \cc(\vv_t + (\Gr \vv) \ww_2)\, {\rm d}E 
\\
\label{eq:LtE}
\LstabG(\ww_2; \, \uu, \vv) &:= \sum_{E \in \Omega_h}\delta_E \int_E \cc(-\nu \divv (\epseps(\uu))) \,\cc(\vv_t + (\Gr \vv) \ww_2)\, {\rm d}E 
\\
\label{eq:jstab}
\JstabG(\ww_1, \ww_2; \, \uu, \vv) &:= \sum_{E \in \Omega_h}\frac{ \widehat{c} \, h_E^2}{2} \int_{\partial E} \jump{(\Gr \uu) \ww_1} \cdot 
\jump{(\Gr \vv) \ww_2}  \, {\rm d}e \,,
\\
\label{eq:FtE}
\FstabG(\ww_2; \, \vv) &:= \sum_{E \in \Omega_h}\delta_E \int_E \cc(\ff) \, \cc(\vv_t + (\Gr \vv) \ww_2) \, {\rm d}E
\end{align}
where the parameters $\delta_E$ will be specified later {and $\widehat{c}$ is a real constant uniformly bounded from above and below and introduced in order to allow some tuning of the stabilizing terms (see also \eqref{eq:delta}). Since such
uniform parameter does not affect the theoretical derivations, for the time being we
assume $\widehat{c} = 1$. 
We will be more precise about the practical values of
such constants in the numerical tests section.}
Let us finally introduce the form
\begin{equation}
\label{eq:KtG}
\KstabG(\ww_1, \ww_2; \, \uu, \vv):= \nu a(\uu, \vv) + c(\ww_1; \, \uu, \vv) + \CstabG(\ww_1, \ww_2; \, \uu, \vv)  +  \LstabG(\ww_2; \, \uu, \vv) + \JstabG(\ww_1, \ww_2; \, \uu, \vv)  \,.
\end{equation}
Referring to the spaces \eqref{eq:disc spaces}, the  forms \eqref{eq:KtG} and \eqref{eq:forma b},  the residual right hand side \eqref{eq:FtE}, the  vorticity-stabilized method for the Navier-Stokes equation is given by
\begin{equation}
\label{eq:ns stab}
\left\{
\begin{aligned}
& \text{find $(\uht, \pht) \in \VDGt \times \QDGt$ such that for $n=0, \dots, N$} \\
&\begin{aligned}
\int_{I_n } (\uht_t, \vht) \, {\rm d}t + (\jump{\uht}_{t_n}, \vht^n_+) &+
\int_{I_n} \KstabG(\uht, \uht; \, \uht, \vht) \,{\rm d}t +
\\
+\int_{I_n}b(\vht, \pht) \,{\rm d}t  
&=
\int_{I_n}(\ff, \vht) \,{\rm d}t + \int_{I_n} \FstabG(\uht; \, \vht) \,{\rm d}t 
 \,\,\, & \text{$\forall \vht \in \Pk_1(I_n, \VDG)$,} \\
\int_{I_n}b(\uht, \qht) \,{\rm d}t &= 0 \,\,\, & \text{$\forall \qht \in \Pk_1(I_n, \QDG)$,}
\\
\uht^0_- & = \uht_0 \,.
\end{aligned}
\end{aligned}
\right.
\end{equation}
As above we stress that the discrete velocity solution $\uht$ is exactly divergence-free.

\begin{rmk}
\label{rm:3d}
The proposed approach could be extended also to the three dimensional case, considering the $3D$ counterpart of
the stabilization of the vorticity equation \eqref{eq:ns residuo}, where the scalar-valued $\cc$ operator is replaced by the vector-valued operator $\CC:= \boldsymbol{\nabla} \times$.
%
\end{rmk}

\section{Theoretical Analysis} \label{sec:theo}

The aim of the following sections is to derive a FEM discretization of the stabilized equation \eqref{eq:ns stab}.
In the following the symbol $\lesssim$ will denote a bound up to a generic positive constant, independent of the mesh size $h$, of the time step $\tau$, of the parameters $\delta_E$, of the diffusive coefficient $\nu$,  of the problem solution $\uu$, of the problem data $\ff$ and $\uu_0$,
but which may depend on  $\Omega$ and $\I$, on the ``polynomial'' order of the method $k$, 
and on the mesh regularity constants $c_{{\rm M}1}$ and $c_{{\rm M}2}$ in Assumptions \textbf{(M1)} and \textbf{(M2)}.
%

\subsection{Stability analysis} \label{sub:stab}

Recalling the definition of discrete kernel $\ZDG$ in \eqref{eq:ZDG}, we consider the following formulation of \eqref{eq:ns stab}
\begin{equation}
\label{eq:ns thed}
\left\{
\begin{aligned}
& \text{find $\uht \in \ZDGt$ such that} \\
&\begin{aligned}
\AstabGN (\uht, \uht; \, \uht, \vht) & =
\rgN(\uht_0, \uht; \, \vht)
\,\,\, & \text{$\forall \vht \in \ZDGt$,}
\end{aligned}
\end{aligned}
\right.
\end{equation}
where
\begin{equation}
\label{eq:AstabG}
\begin{aligned}
\AstabGn (\ww_1, \ww_2; \, \vv, \zz) :=&
\int_0^{t_{n+1}} (\vv_t, \zz) \,{\rm d}t  +
\sum_{j=1}^n  (\jump{\vv}_{t_j}, \zz^j_+) +
(\vv^0_+, \zz^0_+) +
\\
& + \int_0^{t_{n+1}}\KstabG(\ww_1, \ww_2; \, \vv, \zz) \,{\rm d}t  
\end{aligned}
\end{equation}
\begin{equation}
\label{eq:rg}
\rgn (\boldsymbol{\sigma}, \ww_2; \, \zz) := 
\int_0^{t_{n+1}} (\ff, \zz)\, {\rm d}t +
\int_0^{t_{n+1}} \FstabG(\ww_2; \, \zz) \, {\rm d}t + 
(\boldsymbol{\sigma}, \zz^0_+) \,. 
\end{equation}
A first important remark is in order. 
Under the regularity Assumptions \textbf{(A1)} 
and recalling the kernel inclusion \textbf{(P5)},
the proposed method is strong consistent, i.e. 
the exact solution $\uu$  of the continuous problem \eqref{eq:ns variazionale ker},  is such that
\begin{equation}
\label{eq:ns thec}
\AstabGN(\uu, \uht; \, \uu, \vht)  =
\rgN(\uu_0, \uht; \, \vht)
\quad  \text{$\forall \vht \in \ZDGt$.}
\end{equation}

\noindent
For any $\wht \in \ZDGt$ we introduce the following norm 
\begin{equation}
\label{eq:norma}
\begin{aligned}
\| \vv \|_{\wht, n}^2 :=&
\frac{1}{2} \bigl( 
\|\vv^{n+1}_-\|^2 +
\sum_{j=1}^n \|\jump{\vv}_{t_j}\|^2 + \|\vv^{0}_+\|^2
\bigr)
+ {\nu \|\epseps (\vv)\|^2_{L^2(0, t_{n+1}, L^2(\Omega))}}+
\\
&+ 
\sum_{E \in \Omega_h} \delta_E \|\cc(\vv_t + (\Gr \vv) \wht)\|^2_{L^2(0, t_{n+1}, L^2(E))} +
\\
&+
\sum_{E \in \Omega_h} \frac{h_E^2}{2 } \|\jump{(\Gr \vv) \wht}\|^2_{L^2(0, t_{n+1}, L^2(\partial E))} \,.
\end{aligned}
\end{equation}
The following results are instrumental to prove the well-posedness of problem \eqref{eq:ns thed}.

\begin{proposition}[Coercivity]
\label{prp:coercivity}
Under the mesh Assumption \cfan{(M1)}, if the family of  parameters $\{\delta_E\}_{E \in \Omega_h}$ satisfies
\begin{equation}
\label{eq:delta_E}
\delta_E \leq \frac{h_E^4}{\nu \gamma_E^2} \qquad \forall E \in \Omega_h
\end{equation}
where $\gamma_E$ is the constant appearing in the inverse estimate of Lemma \ref{lm:inverse} for $m=2, p=2$, then for any $\wht \in \ZDGt$
the bilinear form $\AstabGn(\wht, \wht; \cdot, \cdot)$ satisfies for all $\vht \in \ZDGt$ the coerciveness inequality
\[
\AstabGn(\wht, \wht; \, \vht, \vht) 
\geq \frac{1}{2}  \|\vht\|^2_{\wht,n} \qquad n = 0,1,...,N \,.
\]
\end{proposition}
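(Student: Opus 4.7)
The plan is to expand $\AstabGn(\wht,\wht;\,\vht,\vht)$ term-by-term and show that every contribution reproduces (or exceeds) the corresponding piece of $\tfrac{1}{2}\|\vht\|^2_{\wht,n}$, with the only sign-indefinite contribution ($\LstabG$) being absorbed thanks to the hypothesis \eqref{eq:delta_E}.

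First I would treat the \emph{time part}. Using the standard DG-in-time identity
$\int_{I_j}(\vht_t,\vht)\,{\rm d}t=\tfrac12(\|\vht^{j+1}_-\|^2-\|\vht^{j}_+\|^2)$ on each subinterval, together with
$(\jump{\vht}_{t_j},\vht^j_+)=\tfrac12\|\vht^j_+\|^2-\tfrac12\|\vht^j_-\|^2+\tfrac12\|\jump{\vht}_{t_j}\|^2$ (obtained from $\jump{\vht}_{t_j}=\vht^j_+-\vht^j_-$ and the polarization identity), a telescopic sum yields
\[
\int_0^{t_{n+1}}(\vht_t,\vht)\,{\rm d}t+\sum_{j=1}^n(\jump{\vht}_{t_j},\vht^j_+)+(\vht^0_+,\vht^0_+)=\tfrac12\|\vht^{n+1}_-\|^2+\tfrac12\sum_{j=1}^n\|\jump{\vht}_{t_j}\|^2+\tfrac12\|\vht^0_+\|^2 \, .
\]
This already gives twice the discrete-time contribution to $\tfrac12\|\vht\|^2_{\wht,n}$.

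Next I would handle the \emph{space part} inside $\KstabG$. The diffusion term gives $\nu a(\vht,\vht)=\nu\|\epseps(\vht)\|^2$; the convection term vanishes, because $\wht(t)\in\ZDG\subset\ZCG$ for a.e.\ $t$ by \textbf{(P4)}, and the skew-symmetry \eqref{eq:c-skew} forces $c(\wht;\vht,\vht)=0$; finally $\CstabG$ and $\JstabG$ (with $\widehat{c}=1$) reproduce exactly the last two stabilization blocks of $\|\vht\|^2_{\wht,n}$.

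The main obstacle — and essentially the only nontrivial point — is bounding $\LstabG(\wht;\vht,\vht)$, which has indefinite sign. Here I would apply Cauchy--Schwarz on each element followed by Young's inequality with weights chosen to match the target norm:
\[
|\LstabG(\wht;\vht,\vht)|\le\sum_{E\in\Omega_h}\tfrac{\delta_E}{2}\,\nu^2\,\|\cc\,\divv\epseps(\vht)\|_E^2+\sum_{E\in\Omega_h}\tfrac{\delta_E}{2}\,\|\cc(\vht_t+(\Gr\vht)\wht)\|_E^2 \, .
\]
Since $\vht|_E$ is a polynomial (of degree at most $k+3$ in space by \textbf{(P2)}), the inverse estimate (Lemma \ref{lm:inverse}) applied to $\epseps(\vht)$ gives $\|\cc\,\divv\epseps(\vht)\|_E\le|\epseps(\vht)|_{2,E}\le\gamma_E h_E^{-2}\|\epseps(\vht)\|_E$. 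The assumption \eqref{eq:delta_E} is exactly the condition $\delta_E\nu^2\gamma_E^2 h_E^{-4}\le\nu$, hence the first sum is bounded by $\tfrac{\nu}{2}\|\epseps(\vht)\|^2$, which halves the $\nu a(\vht,\vht)$ contribution; the second sum is a half of $\CstabG(\wht,\wht;\vht,\vht)$ and is absorbed there. After integration in time on $(0,t_{n+1})$, the remaining quantities are $\tfrac{\nu}{2}\|\epseps(\vht)\|^2_{L^2(L^2)}+\tfrac12\sum_E\delta_E\|\cc(\vht_t+(\Gr\vht)\wht)\|^2_{L^2(L^2)}+\sum_E\tfrac{h_E^2}{2}\|\jump{(\Gr\vht)\wht}\|^2_{L^2(L^2)}$, which together with the time block obtained above is readily seen to dominate $\tfrac12\|\vht\|^2_{\wht,n}$. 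The key delicate point is therefore the calibration of $\delta_E$ against the inverse-estimate constant $\gamma_E$; everything else is bookkeeping.
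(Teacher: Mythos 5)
Your proposal is correct and follows essentially the same route as the paper: the telescoping DG-in-time identity for the temporal terms, skew-symmetry \eqref{eq:c-skew} to kill the convection term, exact reproduction of the $\CstabG$ and $\JstabG$ blocks, and absorption of $\LstabG$ via Cauchy--Schwarz, the inverse estimate of Lemma \ref{lm:inverse}, and the calibration \eqref{eq:delta_E} of $\delta_E$ against $\gamma_E$. No substantive differences to report.
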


\begin{proof}
. \,

We preliminary observe that for $j=1, \dots, n$
\[
\begin{aligned}
\int_{t_j}^{t_{j+1}} (\vht_t, \vht) \, {\rm d}t + (\jump{\vht}_{t_j}, \vht_+^j) &=
\frac{1}{2}  \int_{t_j}^{t_{j+1}} \frac{d}{dt} \|\vht\|^2 \, {\rm d}t + 
\|\vht_+^j\|^2 - (\vht_-^j, \vht_+^j)
\\
& =
\frac{1}{2}  \|\vht_-^{j+1}\|^2 -  \frac{1}{2}  \|\vht_+^j\|^2 +
\|\vht_+^j\|^2 - (\vht_-^j, \vht_+^j)
\\
& =
\frac{1}{2} \bigl( \|\vht_-^{j+1}\|^2  + \|\jump{\vht}_{t_j}\|^2 - \|\vht_-^j\|^2  \bigr) \,.
\end{aligned}
\]
Therefore we have
\begin{equation}
\label{eq:coe2}
\begin{aligned}
\int_0^{t_{n+1}} (\vht_t, \vht) \,{\rm d}t  &+ 
\sum_{j=1}^n  (\jump{\vht}_{t_n}, \vht^n_+) +
\|\vht^0_+\|^2 =
\\
&= \int_{t_0}^{t_1} (\vht_t, \vht) \,{\rm d}t  + \|\vht^0_+\|^2 +
\sum_{j=1}^n \biggl( 
\int_{t_j}^{t_{j+1}} (\vht_t, \vht) \, {\rm d}t + 
(\jump{\vht}_{t_j}, \vht_+^j) 
\biggr) 
\\
& =
\frac{1}{2}\bigl( \|\vht_-^{1}\|^2 + \|\vht_+^0\|^2  \bigr)+
\frac{1}{2} \sum_{j=1}^n\bigl( \|\vht_-^{j+1}\|^2 + \|\jump{\vht}_{t_j}\|^2 - \|\vht_-^j\|^2  \bigr) 
\\
& = 
\frac{1}{2}  \bigl( \|\vht_-^{n+1}\|^2 + \sum_{j=1}^n  \|\jump{\vht}_{t_j}\|^2 +
\|\vht^0_+\|^2 \bigr) \,.
\end{aligned}
\end{equation}
Furthermore recalling \eqref{eq:c-skew}, direct computations yield 
\begin{equation}
\label{eq:coe3}
\begin{aligned}
&
\int_0^{t_{n+1}} \nu a(\vht, \vht) \, {\rm d} t \geq  \nu \|\epseps( \vht)\|^2_{L^2(0, t_{n+1}, L^2(\Omega))} \,,
\\
&\int_0^{t_{n+1}} c(\wht; \, \vht, \vht) \, {\rm d} t = 0 \,,
\\
&\int_0^{t_{n+1}} \CstabG(\wht, \wht; \, \vht, \vht) \, {\rm d} t =
\sum_{E \in \Omega_h} \delta_E \|\cc(\vht_t + (\Gr \vht) \wht)\|^2_{L^2(0, t_{n+1}, L^2(E))}\,,
\\
&\int_0^{t_{n+1}} \JstabG(\wht, \wht; \, \vht, \vht) \, {\rm d} t =
\sum_{E \in \Omega_h} \frac{h_E^2}{2} \|\jump{(\Gr \vht) \wht}\|^2_{L^2(0, t_{n+1}, L^2(\partial E))}\,.
\end{aligned}
\end{equation}
For the term $\LstabG(\wht; \, \cdot, \cdot)$
employing the Cauchy-Schwarz inequality,
Lemma \ref{lm:inverse} and since $\delta_E$ satisfies \eqref{eq:delta_E}, we infer
\begin{equation}
\label{eq:coe4}
\begin{aligned}
&\int_0^{t_{n+1}} \LstabG(\wht; \, \vht, \vht)  \, {\rm d}t
= \int_0^{t_{n+1}} \sum_{E \in \Omega_h}\delta_E \int_E \cc(-\nu \divv (\epseps(\vht))) \, \cc(\vht_t + (\Gr \vht) \wht) \, {\rm d}E  \,{\rm d}t
\\
&\quad \geq 
- \frac{1}{2} \sum_{E \in \Omega_h} \left(
\int_0^{t_{n+1}}   \delta_E \|\cc(\vht_t + (\Gr \vht) \wht)\|^2_E {\rm d}t +  
\int_0^{t_{n+1}}   \delta_E \nu^2 \|\cc(\divv(\epseps(\vht)))\|^2_E   \, {\rm d}t 
\right)
\\
& \quad\geq 
- \frac{1}{2} \sum_{E \in \Omega_h} \delta_E \|\cc(\vht_t + (\Gr \vht) \wht)\|^2_{L^2(0, t_{n+1}, L^2(E))} 
 - \frac{1}{2}  
\int_0^{t_{n+1}} \sum_{E \in \Omega_h} \frac{\delta_E \nu^2 \gamma_E^2}{h_E^4} \|\epseps(\vht)\|_E^2\, {\rm d}t
\\
& \quad\geq 
- \frac{1}{2} \sum_{E \in \Omega_h} \delta_E \|\cc(\vht_t + (\Gr \vht) \wht)\|^2_{L^2(0, t_{n+1}, L^2(E))} 
 - \frac{1}{2}  
\int_0^{t_{n+1}} \sum_{E \in \Omega_h} \nu \|\epseps(\vht)\|_E^2\, {\rm d}t
\\
& \quad\geq 
- \frac{1}{2} \sum_{E \in \Omega_h} \delta_E \|\cc(\vht_t + (\Gr \vht) \wht)\|^2_{L^2(0, t_{n+1}, L^2(E))} 
 - \frac{1}{2} 
\nu \|\epseps(\vht)\|^2_{L^2(0, t_{n+1}, L^2(\Omega))} \,.
\end{aligned}
\end{equation}
The thesis follows collecting \eqref{eq:coe2}, \eqref{eq:coe3} and \eqref{eq:coe4}.
\end{proof}

\begin{lemma}
\label{lm:1}
Let $\wht \in \ZDGt$ and let $\vv \in \Pk_1(\I^\tau, \VCG)$ satisfying items $i)$ and $ii)$ in Assumption \cfan{(A1)}, 
so that $\| \vv \|_{\wht,n}^2$ is well defined.
Then it holds that
\[
\|\vv\|^2_{L^2(0, t_{n+1}, L^2(\Omega))}
\lesssim  \tau \| \vv \|_{\wht,n}^2 +  \tau \sum_{j=1}^{n-1} \| \vv \|_{\wht,j}^2  \,.
\]
\end{lemma}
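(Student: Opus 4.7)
The strategy is to exploit the fact that $\vv \in \Pk_1(\I^\tau,\VCG)$ is affine in time on each $I_j$, so its $L^2(I_j;L^2(\Omega))$ norm is fully controlled by the two endpoint values $\vv_+^j$ and $\vv_-^{j+1}$. Those endpoints, in turn, can be read off the definition \eqref{eq:norma} of $\|\cdot\|_{\wht,j}$, either directly or through the DG jump identity $\vv_+^j = \vv_-^j + \jump{\vv}_{t_j}$.

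First, I would write, for $t\in I_j$, the linear interpolation $\vv(\cdot,t) = (1-s)\vv_+^j + s\,\vv_-^{j+1}$ with $s=(t-t_j)/\tau\in[0,1]$ and apply Jensen's inequality to the convex function $\xx \mapsto \|\xx\|^2$. Integrating over $I_j$ gives
\[
\int_{I_j}\|\vv\|^2\,{\rm d}t \;\lesssim\; \tau\bigl(\|\vv_+^j\|^2 + \|\vv_-^{j+1}\|^2\bigr),
\]
so that summing over $j=0,\dots,n$ reduces the claim to estimating a sum of endpoint norms by $\tau\sum_j\|\vv\|^2_{\wht,j}$.

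Next, I would bound each endpoint directly from \eqref{eq:norma}. The right endpoint is immediate, since $\tfrac12\|\vv_-^{j+1}\|^2$ sits explicitly in the norm, giving $\|\vv_-^{j+1}\|^2 \le 2\,\|\vv\|^2_{\wht,j}$. For the left endpoint with $j\ge 1$, the jump identity combined with $\|a+b\|^2\le 2\|a\|^2+2\|b\|^2$ yields
\[
\|\vv_+^j\|^2 \;\le\; 2\|\vv_-^j\|^2 + 2\|\jump{\vv}_{t_j}\|^2 \;\le\; 4\|\vv\|^2_{\wht,j-1} + 4\|\vv\|^2_{\wht,j},
\]
where the first summand uses the previous-index bound and the second uses that the full sum $\tfrac12\sum_{i=1}^j\|\jump{\vv}_{t_i}\|^2$ is contained in $\|\vv\|^2_{\wht,j}$. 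For the residual left endpoint $\vv_+^0$, the term $\tfrac12\|\vv_+^0\|^2$ is present in every $\|\cdot\|^2_{\wht,m}$, so $\|\vv_+^0\|^2\le 2\|\vv\|^2_{\wht,m}$ for any choice of $m$.

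Collecting all the estimates and reindexing produces
\[
\|\vv\|^2_{L^2(0,t_{n+1};L^2(\Omega))} \;\lesssim\; \tau \sum_{j=0}^n \|\vv\|^2_{\wht,j},
\]
which is the asserted bound (the $j=0$ contribution, amounting to $\|\vv_+^0\|^2$ and $\|\vv_-^1\|^2$, can be absorbed into $\|\vv\|^2_{\wht,1}$ when $n\ge 1$: the first term directly, and the second through $\vv_-^1 = \vv_+^1 - \jump{\vv}_{t_1}$ together with the already derived bound on $\|\vv_+^1\|^2$). The only mildly delicate point is exactly this bookkeeping near $t_0$, where the jump identity is unavailable and one must rely on the explicit presence of $\|\vv_+^0\|^2$ inside every $\|\cdot\|^2_{\wht,j}$; the rest of the argument is purely algebraic manipulation of the DG time structure.
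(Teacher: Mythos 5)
Your argument follows essentially the same route as the paper's proof: both reduce $\int_{I_j}\|\vv\|^2\,{\rm d}t$ to the two endpoint values via the elementary bound $\|p\|^2_{L^2(I_j)}\le\tfrac{\tau}{2}\bigl((p_-^{j+1})^2+(p_+^j)^2\bigr)$ for affine functions (your Jensen derivation yields exactly this constant), and then trade $\|\vv_+^j\|$ for $\|\vv_-^j\|$ and $\|\jump{\vv}_{t_j}\|$ through the jump identity so that everything is read off the definition \eqref{eq:norma}. The core of the proof is therefore correct. The one step that does not hold as written is the final absorption of the $j=0$ contribution: to bound $\|\vv_-^1\|^2$ you invoke $\vv_-^1=\vv_+^1-\jump{\vv}_{t_1}$ together with your ``already derived bound'' $\|\vv_+^1\|^2\le 4\|\vv\|^2_{\wht,0}+4\|\vv\|^2_{\wht,1}$; but that bound was itself obtained from $\|\vv_-^1\|^2\le 2\|\vv\|^2_{\wht,0}$, so the term $\|\vv\|^2_{\wht,0}$ never disappears and the argument is circular. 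Indeed, since $\vv_+^1$ and $\vv_-^2$ are independent degrees of freedom of the affine function $\vv|_{I_1}$, there is no way to control $\|\vv_-^1\|^2$ by the endpoint and jump terms of $\|\vv\|^2_{\wht,1}$ alone, and the honest conclusion of your computation is $\|\vv\|^2_{L^2(0,t_{n+1},L^2(\Omega))}\lesssim\tau\sum_{j=0}^{n}\|\vv\|^2_{\wht,j}$. This is a minor bookkeeping point rather than a conceptual error: the paper's own last inequality silently drops the very same $\|\vv\|^2_{\wht,0}$ term that bounds $\|\vv_-^1\|^2$ (its sum should start at $j=0$ as well), and the extra summand is harmless in the subsequent Gronwall argument. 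You were right to flag the behaviour near $t_0$ as the delicate spot; the fix is simply to keep the $j=0$ term in the sum rather than to absorb it.
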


\begin{proof}
. \,
For any $p(t) \in \Pk_1(\I^\tau)$, a simple calculation shows that
\begin{equation}
\label{eq:lm1-1}
\|p\|_{L^2(\I_j)}^2 
\leq
\frac{\tau}{2} \bigl( (p_-^{j+1})^2 + (p_+^j)^2 \bigr)  \qquad j=0,1,...,N \,.
\end{equation}
Applying the trivial generalization of \eqref{eq:lm1-1} to the function-valued case we infer
\[
\begin{aligned}
\|\vv\|^2_{L^2(0, t_{n+1}, L^2(\Omega))} &= 
\sum_{j=0}^n \int_{t_j}^{t_{j+1}} \|\vv\|^2 \,{\rm d}t
\leq
\sum_{j=0}^n  \frac{\tau}{2} 
\bigl( \|\vv^{j+1}_-\|^2 + \|\vv^j_+\|^2\bigr)
\\
& \leq
\sum_{j=1}^n  \tau
\bigl( \|\vv^{j+1}_-\|^2 + \|\vv^j_-\|^2 + \|\jump{\vv}_{t_j}\|^2\bigr) + 
\frac{\tau}{2}
\bigl( \|\vv^{1}_-\|^2 + \|\vv^0_+\|^2\bigr)
\\
& \leq
\tau \bigl( \|\vv^{n+1}_-\|^2 + 
\sum_{j=1}^n  \|\jump{\vv}_{t_j}\|^2 +
\|\vv^0_+\|^2 \bigr) + 
2 \tau\sum_{j=1}^n  
\|\vv^j_-\|^2 
\\
& \leq
2 \tau \| \vv \|_{\wht, n}^2 + 
4 \tau \sum_{j=1}^{n-1} \| \vv \|_{\wht, j}^2\,.
\end{aligned}
\]
\end{proof}


\begin{lemma}
\label{lm:2}
Let $\wht \in \ZDGt$ and let $\vv \in \Pk_1(\I^\tau, \VCG)$ satisfying items $i)$ and $ii)$ in Assumption \cfan{(A1)}, 
so that $\| \vv \|_{\wht,n}^2$ is well defined.
Then the following hold
\begin{align}
\label{eq:lm2-1}
\rgn (\uht_0, \wht; \vv)
&\lesssim
\max \{1, \nu^{-1/2} \}
\bigl(\cfun{DATA}(n)\bigr)^{1/2} 
\|\vv\|_{\wht,n}\,,
\\
\label{eq:lm2-2}
\rgn (\uht_0, \wht; \vv)
&\lesssim
\bigl(\cfun{DATA}(n)\bigr)^{1/2} 
\left( (1 + \tau)\|\vv\|_{\wht,n}^2 + \tau \sum_{j=1}^{n-1} \| \vv \|_{\wht,j}^2 \right)^{1/2}\,,
\end{align}
where for $j=1, \dots, N$
\[
\cfun{DATA}(j) :=  \|\uht_0\|^2 +  \|\ff\|^2_{L^2(0, t_{j+1}, L^2(\Omega))} + 
\sum_{E \in \Omega_h} \delta_E \|\cc \ff \|_{L^2(0, t_{j+1},L^2(E))}^2 \,.
\]
 \end{lemma}

\begin{proof}
. \,
From definition \eqref{eq:rg} we need to bound the following terms
\[
\begin{aligned}
\rgn (\uht_0, \wht; \, \vv) &\leq 
\int_0^{t_{n+1}} \|\ff\| \|\vv\| \,{\rm} dt +
\int_0^{t_{n+1}} |\FstabG(\wht; \, \vv) | \, {\rm d}t + \|\uht_0\| \| \vv^0_+\|
=: r_1 + r_2 + r_3 \,.
\end{aligned}
\]
For the term $r_1$, Cauchy-Schwarz inequality yields
\[
r_1 \leq 
\|\ff\|_{L^2(0, t_{n+1}, L^2(\Omega))} \, 
\|\vv\|_{L^2(0, t_{n+1}, L^2(\Omega))} 
\,. 
\]
From the previous bound, employing the Korn inequality we infer 
\begin{equation}
\label{eq:r1-1}
r_1 \lesssim 
\|\ff\|_{L^2(0, t_{n+1}, L^2(\Omega))} \, 
\|\epseps(\vv)\|_{L^2(0, t_{n+1}, L^2(\Omega))} 
\lesssim 
\nu^{-1/2}\|\ff\|_{L^2(0, t_{n+1}, L^2(\Omega))} \, 
\|\vv\|_{\wht, n} 
\,.
\end{equation}
Alternatively, employing Lemma \ref{lm:1}, we have
\begin{equation}
\label{eq:r1-2}
r_1 \lesssim 
\|\ff\|_{L^2(0, t_{n+1}, L^2(\Omega))} \, 
\left(\tau \| \vv \|_{\wht, n}^2 + 
\tau \sum_{j=1}^{n-1} \| \vv \|_{\wht, j}^2  \right)^{1/2}
\,.
\end{equation}
Direct computations show that
\[
\begin{aligned}
r_2 &\leq\int_{0}^{t_{n+1}} 
\sum_{E \in \Omega_h} \delta_E \|\cc \ff\|_E 
\|\cc(\vv_t + (\Gr \vv) \wht )\|_E \, {\rm d}t
\\
& \leq  
\sum_{E \in \Omega_h} \delta_E \|\cc \ff\|_{L^2(0, t_{n+1}, L^2(E))} \,
\|\cc(\vv_t + (\Gr \vv) \wht )\|_{L^2(0, t_{n+1}, L^2(E))} 
\\
& \leq  
\biggl( \sum_{E \in \Omega_h}  \delta_E\|\cc \ff\|_{L^2(0, t_{n+1}, L^2(E))}^2 \biggr)^{1/2} \,
\biggl( \sum_{E \in \Omega_h} \delta_E
\|\cc(\vv_t + (\Gr \vv) \wht )\|_{L^2(0, t_{n+1}, L^2(E))}^2
\biggr)^{1/2} 
\\
& \leq  
\biggl( \sum_{E \in \Omega_h}  \delta_E\|\cc \ff\|_{L^2(0, t_{n+1}, L^2(E))}^2 \biggr)^{1/2} \, \|\vv\|_{\wht, n} \,,
\end{aligned}
\]
and
\[
r_3 \leq 
\|\uht_0\| \, \|\vv^0_+\|
\leq 
\|\uht_0\| \, \|\vv\|_{\wht, n} \,.
\]
Estimates \eqref{eq:lm2-1} and \eqref{eq:lm2-2} can be easily derived employing the bounds for $r_2$ and $r_3$ and bounds \eqref{eq:r1-1} and \eqref{eq:r1-2} respectively.
\end{proof}
The following result establishes the existence of discrete solutions.
\begin{proposition}[Well-posedness]
\label{prp:stability}
Under the mesh Assumption \cfan{(M1)}, if the family of  parameters $\{\delta_E\}_{E \in \Omega_h}$ satisfies \eqref{eq:delta_E}, problem \eqref{eq:ns thed} admits at least a solution $\uht \in \ZDGt$. 
Moreover, assuming $\tau \lesssim 1$, for $n=0,\dots, N$ the following bound holds
\begin{equation}
\label{eq:bound}
\|\uht\|_{\uht, n}^2 \lesssim \cfun{DATA}(n) \,.
\end{equation}
\end{proposition}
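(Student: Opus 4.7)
The proposition combines existence of $\uht \in \ZDGt$ with the energy bound \eqref{eq:bound}. The key observation is that both parts reduce to combining the coercivity of Proposition \ref{prp:coercivity} with the right-hand side estimate of Lemma \ref{lm:2}.

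I would prove the a priori bound first, since it is essentially a one-line consequence of the material already developed. Given the (possibly non-unique) solution $\uht$, test \eqref{eq:ns thed} with $\vht = \uht$ to obtain
\[
\AstabGn(\uht,\uht;\,\uht,\uht) = \rgn(\uht_0, \uht;\, \uht).
\]
Proposition \ref{prp:coercivity} (applied with $\wht = \uht$, whose admissibility requires exactly the parameter condition \eqref{eq:delta_E}) bounds the left-hand side from below by $\tfrac{1}{2}\|\uht\|_{\uht,n}^2$. Lemma \ref{lm:2} (applied with $\boldsymbol{\sigma} = \uht_0$ and $\wht = \uht$) bounds the right-hand side by $C\,\cfun{DATA}(n)^{1/2}\,\|\uht\|_{\uht,n}$. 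Combining the two inequalities, and using the elementary fact that $\tfrac{1}{2}x^2 \leq C y^{1/2} x$ implies $x^2 \leq 4 C^2 y$, yields \eqref{eq:bound}.

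For existence, I would proceed by induction on $n$, using that the global formulation \eqref{eq:ns thed} is equivalent to the sequence of slab problems obtained by restricting to test functions supported in a single $I_n$, so that the problem reduces to a finite-dimensional nonlinear system on $X_n := \Pk_1(I_n, \ZDG)$. Identifying $X_n$ with $\R^{d_n}$ and endowing it with, say, the $L^2(I_n, L^2(\Omega))$ inner product, the slab equation can be rewritten as $F_n(\wht) = 0$ for a continuous map $F_n : X_n \to X_n$ defined via Riesz representation of its residual. An energy computation identical to the one above, now applied to the slab equation and using the coercivity estimate locally on $I_n$, shows that $(F_n(\wht),\wht)_{X_n} > 0$ whenever $\|\wht\|_{X_n}$ exceeds a computable threshold depending only on the data and on the previously constructed $\uht|_{[0,t_n]}$. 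The classical corollary of Brouwer's theorem (a continuous self-map of $\R^{d_n}$ which is outward-pointing on a sphere must vanish inside it) then furnishes the desired $\uht|_{I_n}$.

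The principal obstacle is the nonlinearity: both $\AstabGn$ and $\FstabG$ depend on the unknown $\uht$ through the convective, SUPG and jump slots, so linear Lax--Milgram-type arguments are not available. The Brouwer detour circumvents this, and it works because the coercivity of Proposition \ref{prp:coercivity} is uniform in the nonlinear argument $\wht$. A minor but delicate point is checking that, in the slab Brouwer argument, the quantity $\|\cdot\|_{\uht,n}$ (which contains contributions from all past slabs) is genuinely comparable to $\|\cdot\|_{X_n}$ modulo known data terms; this follows from the explicit splitting of jumps and time integrals in \eqref{eq:norma} together with the equivalence of norms on the finite-dimensional space $X_n$, the resulting equivalence constants being absorbed into the data-dependent radius.
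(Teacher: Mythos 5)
Your proposal is correct and follows essentially the same route as the paper: the a priori bound is obtained exactly as you describe (coercivity from Proposition \ref{prp:coercivity} plus Lemma \ref{lm:2} with $\boldsymbol{\sigma}=\uht_0$), and existence rests on the same Brouwer-type corollary (the paper cites \cite[Chap.~IV, Corollary~1.1]{girault-raviart:book}) applied to the map whose sign on a sphere is controlled by the same two ingredients. The only difference is organizational: you run the fixed-point argument slab-by-slab by induction on $I_n$, while the paper applies it once on the global space $\ZDGt$, handling the $\wht$-dependence of the norm by splitting $\| \cdot \|_{\wht,n}$ into a $\wht$-independent genuine norm plus a remainder absorbed by Cauchy--Schwarz — the same delicate point you flag via finite-dimensional norm equivalence.
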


\begin{proof}
. \,


For any $\wht \in  \ZDGt$ and for $n=0, \dots, N$, let $\beth_{ \wht,n} \colon \ZDGt \to \R$ given by
\[
\beth_{ \wht,n}(\vht) :=
\AstabGn(\wht, \wht; \, \wht, \vht) -
\rgn(\uht_0, \wht; \, \vht) 
\quad \text{for all $\vht \in \ZDGt$.}
\]
Let us split the norm $\| \vht \|_{\wht, n}^2 = ||| \vht |||_{n}^2 + ||| \vht |||_{\wht, n}^2$, where
\[
\begin{aligned}
||| \vht |||_{n}^2 :=&
\frac{1}{2} \bigl( 
\|\vht^{n+1}_-\|^2 +
\sum_{j=1}^n \|\jump{\vht}_{t_j}\|^2 + \|\vht^{0}_+\|^2
\bigr)
+ \nu \|\epseps(\vht)\|^2_{L^2(0, t_{n+1}, L^2(\Omega))}
\\
||| \vht |||_{\wht, n}^2 :=&
 \sum_{E \in \Omega_h} \left( \delta_E \|\cc(\vht_t + (\Gr \vht) \wht)\|^2_{L^2(0, t_{n+1}, L^2(E))}
+ \frac{h_E^2}{2 } \|\jump{(\Gr \vht) \wht}\|^2_{L^2(0, t_{n+1}, L^2(\partial E))}
\right).
\end{aligned}
\]
Note that $||| \cdot |||_{n}^2$ is a norm on $\ZDGt$.
Then employing Proposition \ref{prp:coercivity}
and \eqref{eq:lm2-1}, for any $\vht \in \ZDGt$, the following holds
\[
\begin{aligned}
\beth_{ \vht,n}(\vht) &\geq
\frac{1}{2}|||\vht|||_n^2 +
\frac{1}{2}|||\vht|||_{\vht,n}^2 - 
c_{{\mathcal R}} \max \{1, \nu^{-1/2} \}\texttt{DATA}(n)^{1/2} \left( |||\vht|||_n  + |||\vht|||_{\vht, n} \right)
\end{aligned}
\]
where  $c_{{\mathcal R}}$ denotes the hidden constant in \eqref{eq:lm2-1} and \eqref{eq:lm2-2}.
Then from the Cauchy-Schwarz inequality
\begin{equation}
\label{eq:sta1}
\begin{aligned}
\beth_{ \vht,n}(\vht) &\geq
\frac{1}{2}|||\vht|||_n^2 - 
c_{{\mathcal R}} \max \{1, \nu^{-1/2} \}\texttt{DATA}(n)^{1/2} \, |||\vht|||_n  -
\frac{c_{{\mathcal R}}^2}{2} \max \{1, \nu^{-1} \}\texttt{DATA}(n) \,.
\end{aligned}
\end{equation}
Let 
$$\boldsymbol{\mathcal S}:= \left\{\vht \in \ZDGt \,\,\, \text{s.t.}\,\,\, |||\vht|||_n \leq (1 + \sqrt{2}) c_{\mathcal R} \max \{1, \nu^{-1/2} \} \texttt{DATA}(n)^{1/2}
\right\}$$
then \eqref{eq:sta1} implies
\[
\beth_{ \vht,n}(\vht) \geq 0 
\quad \text{for all $\vht \in \partial \boldsymbol{\mathcal S}$.}
\]
Therefore employing the fixed-point Theorem \cite[Chap. IV, Corollary 1.1]{girault-raviart:book} there exists $\uht \in \boldsymbol{\mathcal S}$ such that $\beth_{\uht,n}(\vht) =0$ for all $\vht \in \ZDGt$, i.e. $\uht$ is a solution of \eqref{eq:ns thed}.
Concerning the bound \eqref{eq:bound} employing Proposition \ref{prp:coercivity},  bound \eqref{eq:lm2-2} and the Cauchy-Schwarz inequality, we have
\[
\begin{aligned}
\frac{1}{2}\| \uht \|_{\uht, n}^2 & \leq
\AstabGn(\uht, \uht; \, \uht, \uht) =
\rgn(\uht_0, \uht; \, \uht) 
\\
& \leq 
c_{{\mathcal R}} \texttt{DATA}(n)^{1/2} \left( 
(1 + \tau)\| \uht \|^2_{\uht, n} + \tau \sum_{j=1}^{n-1} \| \uht \|_{\uht, j}^2 \right)^{1/2}
\\
& \leq 
c_{{\mathcal R}}^2 \texttt{DATA}(n) \left(\frac{5}{4} + \tau \right) +
\frac{1}{4}\| \uht \|^2_{\uht, n}  + \tau \sum_{j=1}^{n-1} \| \uht \|_{\uht, j}^2 
\,,
\end{aligned}
\]
then we obtain
\[
\| \uht \|^2_{\uht, n}  \leq 
c_{{\mathcal R}}^2 \texttt{DATA}(n) \left(5 + 4 \tau \right) +
4 \tau \sum_{j=1}^{n-1} \| \uht \|_{\uht, j}^2 \,.
\] 
Estimate \eqref{eq:bound} follows by the bound above and the Gronwall lemma \ref{prp:gronwall}.
\end{proof}

\noindent
Let $\uht \in \ZDGt$ be a solution of problem \eqref{eq:ns thed} then we introduce the following notation
\begin{equation}\label{eq:addedlater}
\| \cdot\|_{\nstabn} := \| \cdot\|_{\uht, n} 
\quad \text{for $n=0, \dots, N$.}
\end{equation}

\begin{corollary}
\label{cor:stability}
Let $\uht \in \ZDGt$ be a solution of Problem \eqref{eq:ns thed}.
Then for $n=0,\dots, N$ it holds that
\[
\|\uht\|^2_{L^{\infty}(t_n, t_{n+1}, L^2(\Omega))} \lesssim \cfun{DATA}(n) \,.
\]
\end{corollary}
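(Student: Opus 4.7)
The plan is to exploit the piecewise-linear-in-time structure of $\uht \in \ZDGt$ together with the bound already established in Proposition~\ref{prp:stability}. The only thing to show is that the $L^{\infty}(t_n, t_{n+1}, L^2(\Omega))$ norm of a $\ZDGt$ function can be controlled by the discrete norm $\|\cdot\|_{\nstabn}$ (possibly combined with $\|\cdot\|_{\nstab,n-1}$).

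First I would observe that, since $\uht|_{I_n}$ is affine in $t$ with values in $\ZDG$, for every $t \in I_n$ one can write
\[
\uht(\cdot,t) = \alpha(t)\,\uht^n_+ + (1-\alpha(t))\,\uht^{n+1}_-,\qquad \alpha(t)\in[0,1].
\]
Taking $L^2(\Omega)$ norms and the supremum over $t \in I_n$ yields
\[
\|\uht\|_{L^{\infty}(t_n,t_{n+1},L^2(\Omega))} \leq \|\uht^n_+\| + \|\uht^{n+1}_-\|.
\]

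Next I would bound each of the two endpoint norms in terms of $\|\uht\|_{\nstab,\cdot}$. From the very definition \eqref{eq:norma} of the discrete norm and \eqref{eq:addedlater},
\[
\tfrac{1}{2}\|\uht^{n+1}_-\|^2 \leq \|\uht\|_{\nstabn}^2.
\]
For $\|\uht^n_+\|$, I distinguish two cases. If $n=0$, then $\uht^0_+$ appears directly in $\|\cdot\|_{\nstab,0}$, giving $\|\uht^0_+\|^2 \leq 2\|\uht\|_{\nstab,0}^2$. If $n\geq 1$, write $\uht^n_+ = \uht^n_- + \jump{\uht}_{t_n}$ and apply the triangle inequality:
\[
\|\uht^n_+\|^2 \leq 2\|\uht^n_-\|^2 + 2\|\jump{\uht}_{t_n}\|^2 \leq 4\|\uht\|_{\nstab,n-1}^2 + 4\|\uht\|_{\nstabn}^2,
\]
where the jump term is controlled by $\|\uht\|_{\nstabn}$ and $\|\uht^n_-\|^2\leq 2\|\uht\|_{\nstab,n-1}^2$ by the previous endpoint bound applied at step $n-1$.

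Finally, combining these estimates I invoke Proposition~\ref{prp:stability} to obtain $\|\uht\|_{\nstab,j}^2 \lesssim \cfun{DATA}(j)$ for $j=n-1,n$, and conclude using the obvious monotonicity $\cfun{DATA}(n-1)\leq \cfun{DATA}(n)$ (all summands defining $\cfun{DATA}$ are nonnegative integrals over expanding intervals). There is no real obstacle; the only mild subtlety is the need to use the bound at the previous time level $n-1$ to control the right-limit $\uht^n_+$, which is naturally absorbed by the monotonicity of $\cfun{DATA}$.
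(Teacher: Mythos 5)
Your proof is correct and follows essentially the same route as the paper: bound the two endpoint values $\|\uht^{n+1}_-\|$ and $\|\uht^n_+\|$ via the $\|\cdot\|_{\nstabn}$ and $\|\cdot\|_{{\rm stab},n-1}$ norms (using the jump decomposition $\uht^n_+ = \uht^n_- + \jump{\uht}_{t_n}$), invoke Proposition \ref{prp:stability}, and conclude by the affine-in-time structure of $\uht$ on $I_n$. Your explicit treatment of the $n=0$ case is a small extra care the paper leaves implicit, but the argument is otherwise identical.
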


\begin{proof}
. \,
Employing the bound \eqref{eq:bound} we have
\[
\begin{aligned}
&\|\uht_-^{n+1}\|^2 \leq 2\|\uht\|_{\nstabn}^2 \lesssim \cfun{DATA}(n)
\\
&\|\uht_+^n\|^2 \lesssim   
\|\jump{\uht}_{t_n}\|^2 + \|\uht_-^n\|^2
\leq
 \|\uht\|_{\nstabn}^2 +  \|\uht\|_{{\rm stab},n-1}^2 \lesssim
\texttt{DATA}(n) + \texttt{DATA}(n-1) \lesssim   \texttt{DATA}(n)  
\end{aligned}
\]
then being $\uht \in \Pk_1(\I^\tau,\ZDG)$ we obtain
\[
\|\uht(\cdot, t)\|^2 \lesssim \texttt{DATA}(n) \qquad \text{for all $t \in I_n$.}
\]
\end{proof}

\subsection{Error analysis} \label{sub:err}


Let $\uu \in C^0((0,t_{N+1}), \ZCG)$ and $\uht \in \ZDGt$ be the solutions of problem \eqref{eq:ns variazionale ker}  and problem \eqref{eq:ns thed}, respectively, and referring to \eqref{eq:calP}, let us define the following error functions
\[
\eei := \uu - \uui \,,
\qquad
\eea := \uht - \uui \,.
\]
In order to derive the error estimates, we require the following stronger regularity conditions on the solution $\uu$

\smallskip\noindent
\textbf{(A2) Regularity Assumption for the error estimates.} 
\[
\begin{aligned}
i)& \,\, \uu \in L^2(0, t_{N+1}, [H^{\textrm{max} \{ 3,\reg+1 \} } (\Omega_h)]^2) \,,
\\
ii)& \,\, \uu \in H^{1}(0, t_{N+1}, [H^{\reg+1} (\Omega_h)]^2) \,,  
&\quad
iii)& \,\, \uu \in L^2(0, t_{N+1}, [W^{\reg+1}_\infty (\Omega_h)]^2) \,,  
\\
iv)& \,\,  \uu \in H^{2}(0, t_{N+1}, [W^{2}_\infty (\Omega_h)]^2)
&\quad
 v)& \,\, \uu \in L^2(0, t_{N+1}, [H^{3/2+\epsilon}(\Omega)]^2)\,,
\end{aligned}
\]
for some $\epsilon >0$ and $0 < \reg \leq k$.

As it often happens in the theoretical analysis of complex problems, the above regularity assumption cannot be guaranteed in general. Therefore our analysis serves the purpose of showing that, at least under favourable conditions, the scheme is able to deliver an accurate and robust (in the sense described in the introduction) approximation.

In order to shorten some equations, in the following we will denote by $\Lambda$ a generic constant depending on the norms of $\uu$ in the spaces listed in Assumptions  \textbf{(A2)} (and, as usual, on $\Omega$, on $k$ and on the mesh shape regularity constants $c_{\rm{M}1}$ and $c_{\rm{M}2}$ but not on $\nu$, $h$, $\tau$, and the stabilization parameters $\delta_E$), whereas $C$ will denote a generic constant depending on $\Omega$, on $k$ and on the mesh shape regularity constants $c_{\rm{M}1}$ and $c_{\rm{M}2}$.

\noindent
According to Lemma \ref{lm:P}, under Assumptions \textbf{(M1)}, \textbf{(M2)} and \textbf{(A2)}  the following hold
\begin{align}
\label{eq:err-int-1}
&\|\eei^n\|^2_\Omega \leq
\Lambda   h^{2 \reg+ 2} 
\\
\label{eq:err-int-2}
&\|\eei\|^2_{L^2(0, t_{n+1}, L^2(\Omega))}  \leq
\Lambda  \left(\tau^{4} + h^{2 \reg+ 2} \right) 
\\
\label{eq:err-int-3}
&\|\Gr \eei\|^2_{L^2(0, t_{n+1}, L^2(\Omega))}  \leq
\Lambda
\left(\tau^{4} + h^{2 \reg} \right) 
\\
\label{eq:err-int-4}
&\|\cc \eei_t\|^2_{L^2(0, t_{n+1}, L^2(\Omega))} \leq
\Lambda
\left(\tau^{2} + h^{2 \reg} \right) 
\\
\label{eq:err-int-5}
&\|\eei\|^2_{L^2(0, t_{n+1}, W^1_{\infty}(\Omega))}  \leq
\Lambda 
\left(\tau^{4} + h^{2 \reg} \right) 
\\
\label{eq:err-int-6}
&\|\eei\|^2_{L^2(0, t_{n+1}, W^2_{\infty}(\Omega))}  \leq
\Lambda 
\left(\tau^{4} + h^{2 \reg-2} \right)
\\
\label{eq:err-int-7}
&\|\eei\|^2_{L^2(0, t_{n+1}, H^3(\Omega_h))}  \leq
\Lambda 
\left(\tau^{4} + h^{2 \reg-4} \right) \, .
\end{align}
In order to derive our analysis, we now make a choice for the parameter $\delta_E$, see also Section \ref{sec:num} for related practical observations. 
Let us set, for all $E \in \Omega_h$, 
\begin{equation}
\label{eq:delta}
\delta_E = c_E \min{\{ h_E^3 , h_E^4/\nu \}} \ , \qquad
\delta := \sup_{E \in \Omega_h} \delta_E \, ,
\end{equation}
where $c_E$ are positive real constants, uniformly bounded from above and below (for instance one can assume $c_E = c$, a fixed positive constant).  Note that, for sufficiently small choices of $c_E$, the above definition clearly satisfies \eqref{eq:delta_E}.

\noindent
We also introduce the following quantity in order to shorten the notation
\begin{multline}
\label{eq:sigmaE}
\sigma := \max\{
\nu \tau^{4},   
\nu h^{2 \reg}, 
\delta \tau^{2}, 
\delta h^{2 \reg-2}, 
\delta \tau^{4}h^{-2}, 
 h^{2 \reg+1}, 
\nu^2 \delta h^{2 \reg -4}, 
\nu^2 \delta \tau^{4}, 
\delta^{-1}\tau^{4}, 
\delta^{-1}h^{2 \reg+4},
h^{-3}\tau^{4}
 \} \,.
\end{multline}

\begin{proposition}[Estimate of $\|\eei\|_{\nstabn}$]
\label{prp:epi}
Under mesh Assumptions \cfan{(M1)}, \cfan{(M2)} and regularity Assumption \cfan{(A2)}, the term $\|\eei\|^2_{\nstabn}$ can be bounded as follows
\[
\|\eei\|^2_{\nstabn} \leq 
\Lambda \,  \sigma (1 +  \cfun{DATA}(n))\,.
\]
\end{proposition}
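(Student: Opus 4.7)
The plan is to unpack the norm $\|\eei\|^2_{\nstabn}$ according to definition \eqref{eq:norma} (with $\wht = \uht$) and bound each of its five constituent contributions separately. The three tools I will repeatedly use are: (i) the interpolation estimates \eqref{eq:err-int-1}--\eqref{eq:err-int-7} coming from Lemma \ref{lm:P} under regularity \cfan{(A2)}; (ii) the discrete $L^\infty(L^2)$ stability bound $\|\uht\|^2_{L^\infty(L^2)} \lesssim \cfun{DATA}(n)$ given by Corollary \ref{cor:stability}; and (iii) inverse estimates combined with the explicit parameter choice \eqref{eq:delta} $\delta_E \le \min\{h_E^3, h_E^4/\nu\}$.

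The easy pieces are dispatched first. Since $\uu \in C^0((0,t_{N+1}),\ZCG)$ by \cfan{(A1)iii} and $\mathcal{I}^\tau$ preserves time-continuity, $\mathcal{P}\uu = \mathcal{I}^\tau(\Pi_h \uu)$ is continuous in time, so the time-jump contribution $\sum_{j=1}^n \|\jump{\eei}_{t_j}\|^2$ vanishes identically. The point-values $\|\eei^{n+1}_-\|^2$ and $\|\eei^0_+\|^2$ are bounded by $\Lambda h^{2\reg+2} \lesssim \Lambda\, h^{2\reg+1}$ via \eqref{eq:err-int-1}, and the viscous term $\nu\|\epseps(\eei)\|^2_{L^2(L^2)}$ by $\Lambda(\nu\tau^4 + \nu h^{2\reg})$ via \eqref{eq:err-int-3}; all of these fit inside $\sigma$.

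For the bulk stabilization contribution $\sum_E \delta_E \|\cc(\eei_t + (\Gr\eei)\uht)\|^2_{L^2(L^2(E))}$ I split by the triangle inequality. The part involving $\cc\eei_t$ is bounded directly by \eqref{eq:err-int-4}, giving $\Lambda(\delta\tau^2 + \delta h^{2\reg})$. The part $\delta_E\|\cc((\Gr\eei)\uht)\|^2$ is treated via the product-rule identity
\[
\cc((\Gr\eei)\uht) = \Gr(\cc\eei)\cdot\uht + \mathcal{B}(\Gr\eei,\Gr\uht),
\]
where $\mathcal{B}$ is a bilinear algebraic operator. In each summand the factor $\uht$ (respectively $\Gr\uht$) is moved into $L^\infty(E)$ using the inverse estimate of Lemma \ref{lm:inverse}, at a cost of $h_E^{-1}$ (respectively $h_E^{-2}$); the stability bound $\|\uht\|^2_{L^\infty(L^2)} \lesssim \cfun{DATA}(n)$ then converts these $L^\infty$ norms to data-dependent constants, while \eqref{eq:err-int-3},\eqref{eq:err-int-6}--\eqref{eq:err-int-7} handle the remaining derivatives of $\eei$. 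The two-sided constraint \eqref{eq:delta} is what allows the resulting combinations to be absorbed into the terms $\delta\tau^4 h^{-2}$, $h^{2\reg+1}$, $\nu^2\delta h^{2\reg-4}$, $\nu^2\delta\tau^4$ appearing in $\sigma$.

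Finally, for the edge-jump term $\sum_E \tfrac{h_E^2}{2}\|\jump{(\Gr\eei)\uht}\|^2_{L^2(L^2(\partial E))}$, two structural observations simplify the analysis: since $\uht \in \VDG \subset \VCG$ is $H^1$-conforming it is continuous across interior edges, and by \cfan{(A1)iv} $\uu \in H^{3/2+\epsilon}(\Omega)$ so $\Gr\uu$ has vanishing trace jumps on interior edges. Consequently $\jump{(\Gr\eei)\uht} = -\jump{\Gr\mathcal{P}\uu}\,\uht$, and Remark \ref{rm:jumps} (trace plus inverse applied to the piecewise polynomial $\Gr\mathcal{P}\uu$), combined with an inverse-estimate $L^\infty$-bound on $\uht$ and Corollary \ref{cor:stability}, produces the remaining $\sigma$-contributions (notably $h^{-3}\tau^4$). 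The main technical obstacle here, and the reason the definition of $\sigma$ in \eqref{eq:sigmaE} has so many entries, is the careful bookkeeping required to match each product of ``$\delta_E\cdot$(interpolation rate)$\cdot$(inverse-estimate factor)$\cdot\cfun{DATA}(n)$'' to a single term of $\sigma$; the two-sided bound \eqref{eq:delta} on $\delta_E$ has been tailored exactly to make this matching possible. Summing all contributions yields the claimed estimate.
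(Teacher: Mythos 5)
Your overall plan (unpack the five pieces of the norm \eqref{eq:norma}, kill the time jumps by continuity, and control the remaining pieces with the interpolation estimates \eqref{eq:err-int-1}--\eqref{eq:err-int-7}, Corollary \ref{cor:stability} and the choice \eqref{eq:delta}) is the paper's plan, and your treatment of the endpoint values, the viscous term and the $\cc\,\eei_t$ part is correct and identical to the paper's. The problem is in the two hard pieces, where you swap the roles of $\eei$ and $\uht$ relative to what actually closes the estimate. For $\delta_E\|\cc((\Gr\eei)\uht)\|_E^2$ you put $\uht$ and $\Gr\uht$ into $L^\infty(E)$ by inverse estimates at cost $h_E^{-1}$ and $h_E^{-2}$, leaving the derivatives of $\eei$ in $L^2$-based norms. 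Count the powers: the first summand then gives $\sum_E \delta_E h_E^{-2}\|\uht\|_E^2\,|\eei|_{2,E}^2 \lesssim \delta h^{-2}(\tau^4+h^{2\reg-2})\,\cfun{DATA}(n)$ and the second gives $\delta h^{-4}(\tau^4+h^{2\reg})\,\cfun{DATA}(n)$; the $h$-parts are $\delta h^{2\reg-4}$, which is $h^{-2}$ \emph{larger} than the entry $\delta h^{2\reg-2}$ available in $\sigma$ (with $\delta\approx h^3$ this is $h^{2\reg-1}$ versus the required $h^{2\reg+1}$, i.e.\ you would lose a full power of $h$ in the final rate). The paper avoids this by doing the opposite: it keeps $\uht$ in $L^2(E)$ (so that Corollary \ref{cor:stability} applies after summing, with only one inverse estimate $\|\Gr\uht\|_E\lesssim h_E^{-1}\|\uht\|_E$) and puts $\eei$ into $W^2_\infty(E)$ and $W^1_\infty(E)$, which is free of extra negative powers of $h$ precisely because Assumption \cfan{(A2)}$\,iii)$--$iv)$ supplies the $L^\infty$-based interpolation bounds \eqref{eq:err-int-5}--\eqref{eq:err-int-6} at the same $h$-rates as the $L^2$ ones. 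Your citation of \eqref{eq:err-int-6}--\eqref{eq:err-int-7} alongside an $L^\infty$ bound on $\uht$ cannot both be used on the same factor, so as written the bookkeeping does not close.

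The edge-jump term has the same defect. The identity $\jump{(\Gr\eei)\uht}=-\jump{(\Gr\mathcal{P}\uu)\,\uht}$ is correct but circular: the jump of $\Gr\mathcal{P}\uu$ is small only because it equals the jump of $\Gr\eei$, and applying \eqref{eq:utile-jump} with $K=\Gr\mathcal{P}\uu$ pulls out $\|\Gr\mathcal{P}\uu\|_{L^\infty(E)}=O(1)$, yielding only $\sum_E h_E\|\uht\|_E^2 = O(h)\,\cfun{DATA}(n)$, far too weak. The working choice is again the opposite assignment: $K=\Gr\eei$ measured in $L^\infty(E)$ via \eqref{eq:err-int-5}, and $\uht$ as the piecewise polynomial kept in $L^2(E)$, which gives $h(\tau^4+h^{2\reg})\,\cfun{DATA}(n)$. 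Finally, note that the entries $\nu^2\delta h^{2\reg-4}$, $\nu^2\delta\tau^4$ and $h^{-3}\tau^4$ of $\sigma$ that you invoke as absorbing your leftovers do not originate in this proposition at all; they come from the $\LstabG$ and stream-function terms in Proposition \ref{prp:err-B}, so they are not available to repair the mismatch above.
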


\begin{proof}
. \,
We preliminary observe that if the solution $\uu$ satisfies \textbf{(A1)}, the norm $\|\eei\|_{\nstabn}$ is well defined.
We need to estimate each term in the definition of the norm $\|\cdot\|_{\nstabn}$.

Employing \eqref{eq:err-int-1}  and since $\|\eei(\cdot)\| \in C^0(\I)$ we have
\begin{equation}
\label{eq:eei-1}
\begin{aligned}
\frac{1}{2} \bigl( 
\|\eei^{n+1}_-\|^2 +
\sum_{j=1}^n \|\jump{\eei}_{t_j}\|^2 + \|\eei^{0}_+\|^2
\bigr) 
= \frac{1}{2} \bigl( 
\|\eei^{n+1}_-\|^2 +
\|\eei^{0}_+\|^2
\bigr) 
\leq \Lambda \, h^{2 \reg + 2} 
\,.
\end{aligned}
\end{equation}
By \eqref{eq:err-int-3} we obtain
\begin{equation}
\label{eq:eei-2}
\begin{aligned}
\nu \|\epseps( \eei)\|^2_{L^2(0, t_{n+1}, L^2(\Omega))} \leq
\nu \|\Gr \eei\|^2_{L^2(0, t_{n+1}, L^2(\Omega))} 
\leq
\Lambda\, \nu \left(\tau^{4} +   h^{2 \reg} \right) 
\,.
\end{aligned}
\end{equation}
By \eqref{eq:err-int-4} we infer
\begin{equation}
\label{eq:eei-3a}
\begin{aligned}
\sum_{E \in \Omega_h} \delta_E  
\|\cc \eei_t\|^2_{L^2(0, t_{n+1}, L^2(E))} 
\leq 
 \delta  
\|\cc \eei_t\|^2_{L^2(0, t_{n+1}, L^2(\Omega))}
\leq
\Lambda \, \delta \left(\tau^{2} +   h^{2 \reg} \right) 
\end{aligned}
\end{equation}
where we recall $\delta$ was defined in \eqref{eq:delta}.
Employing Lemma \ref{lm:inverse}, we obtain
\[
\begin{aligned}
\sum_{E \in \Omega_h} &\delta_E  \|\cc((\Gr \eei) \uht)\|^2_{L^2(0, t_{n+1}, L^2(E))} 
=
\sum_{E \in \Omega_h} \delta_E  \int_0^{t_{n+1}}\|\cc((\Gr \eei) \uht)\|^2_E \, {\rm d}t
\\
& \leq 
 \sum_{E \in \Omega_h} \delta_E  
\int_0^{t_{n+1}} 
\left(|\eei|^2_{W^2_{\infty}(E)} \, \|\uht\|^2_E +
|\eei|^2_{W^1_{\infty}(E)} \, \|\Gr \uht\|^2_E
 \right) \, {\rm d}t
\\
& \leq
 \sum_{E \in \Omega_h} \delta_E 
 \int_0^{t_{n+1}} 
\bigl(|\eei|^2_{W^2_{\infty}(E)}   +
h_E^{-2}|\eei|^2_{W^1_{\infty}(E)} 
 \bigr) C \, \|\uht\|^2_E \, {\rm d}t 
\\
& \leq
\sum_{E \in \Omega_h} \delta_E  
\bigl(\|\eei\|^2_{L^2(0, t_{n+1}, W^2_{\infty}(E))}   +
h_E^{-2}\|\eei\|^2_{L^2(0, t_{n+1}, W^1_{\infty}(E))} 
 \bigr) C \, \|\uht\|^2_{L^{\infty}(0, t_{n+1}, L^2(E))}
 \,.
\end{aligned}
\]
Therefore employing \eqref{eq:err-int-5}  and \eqref{eq:err-int-6}, and Corollary \ref{cor:stability} we infer
\begin{equation}
\label{eq:eei-3b}
\begin{aligned}
\sum_{E \in \Omega_h} &\delta_E  \|\cc((\Gr \eei) \uht)\|^2_{L^2(0, t_{n+1}, L^2(E))} 
\\
&\lesssim \,
\delta \bigl(\|\eei\|^2_{L^2(0, t_{n+1}, W^2_{\infty}(\Omega_h))}   +
h^{-2}\|\eei\|^2_{L^2(0, t_{n+1}, W^1_{\infty}(\Omega_h))} 
 \bigr)\|\uht\|^2_{L^{\infty}(0, t_{n+1}, L^2(\Omega))}
\\
& \leq
\Lambda\, \delta \big( 
\tau^{4} + h^{2 \reg - 2} +  \tau^{4} h^{-2}
\big)\texttt{DATA}(n) \,.
\end{aligned}
\end{equation}
Finally for the last term in definition \eqref{eq:norma}, by \eqref{eq:utile-jump}, \eqref{eq:err-int-5} and Corollary \ref{cor:stability}
\begin{equation}
\label{eq:eei-4}
\begin{aligned}
\sum_{E \in \Omega_h} \frac{h_E^2}{2 } \|\jump{(\Gr \eei) \uht}\|^2_{L^2(0, t_{n+1}, L^2(\partial E))} &=
 \sum_{E \in \Omega_h} \frac{h_E^2}{2 } \int_0^{t_{n+1}} \|\jump{(\Gr \eei) \uht}\|^2_{\partial E} \, {\rm d}t
\\
& \lesssim
\int_0^{t_{n+1}} \!\! h \, |\eei|^2_{W^1_\infty(\Omega_h)} 
 \sum_{E \in \Omega_h} \|\uht\|^2_E  \, {\rm d}t
\\
& \lesssim
h \|\eei\|^2_{L^2(0, t_{n+1}, W^1_\infty(\Omega_h))} \|\uht\|^2_{L^\infty(0, t_{n+1}, L^2(\Omega))}
\\
&\leq
\Lambda h \left(\tau^{4}  + h^{2\reg}\right) 
\texttt{DATA}(n) \,.
\end{aligned}
\end{equation}
The thesis now follows collecting \eqref{eq:eei-1}--\eqref{eq:eei-4} and recalling the definition of $\sigma$ in \eqref{eq:sigmaE}.
\end{proof}

\begin{proposition}[Velocity error equation]
\label{prp:error}
Let $\uu \in C^0((0,t_{N+1}), \ZCG)$ and $\uht \in \ZDGt$ be the solutions of problem \eqref{eq:ns variazionale ker} and problem \eqref{eq:ns thed}, respectively.
Assume $\uu$ satisfies \cfan{(A1)}.
Then if the family of  parameters $\{\delta_E\}_{E \in \Omega_h}$ satisfies \eqref{eq:delta_E}, it holds that

\begin{equation}
\label{eq:abstract}
\frac{1}{2} \|\eea\|^2_{\nstabn} \leq
\err0 + \errc + \errC + \errJ +  \errA
\end{equation}
where
\begin{equation}
\label{eq:err-quan}
\begin{aligned}
\err0 &:= (\uht_0 - \uu_0, \eea^0_+) \,,
\\ 
\errc &:= \int_0^{t_{n+1}}c(\uu- \uht; \, \uu,  \eea) \,{\rm d}t\,,
\\ 
\errC &:= \int_0^{t_{n+1}}\left(\CstabG(\uu, \uht; \, \uu, \eea) - \CstabG(\uht, \uht; \, \uu, \eea)\right) \,{\rm d}t\,,
\\ 
\errJ &:= \int_0^{t_{n+1}}\JstabG(\uu - \uht, \uht; \, \uu, \eea) \,{\rm d}t\,,
\\ 
\errA &:= \AstabGn(\uht, \uht; \, \eei, \eea)\,.
\end{aligned}
\end{equation}
\end{proposition}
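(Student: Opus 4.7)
The plan is to establish the identity $\AstabGn(\uht,\uht;\,\eea,\eea) = \err0 + \errc + \errC + \errJ + \errA$ and then apply the coerciveness result of Proposition~\ref{prp:coercivity} to get the factor $1/2$. So everything comes down to clean algebraic manipulations on the bilinear expression $\AstabGn$, exploiting its partial linearities.

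First, I would write the standard decomposition $\eea = \uht - \uui = \uht - (\uu - \eei) = (\uht - \uu) + \eei$ and use linearity of $\AstabGn$ in the third argument to obtain
\begin{equation*}
\AstabGn(\uht,\uht;\,\eea,\eea) \;=\; \AstabGn(\uht,\uht;\,\uht,\eea) \;-\; \AstabGn(\uht,\uht;\,\uu,\eea) \;+\; \AstabGn(\uht,\uht;\,\eei,\eea) \,.
\end{equation*}
The last term is exactly $\errA$. For the first term, I would observe that by taking $\vht = \eea \in \ZDGt$ as a test function in the discrete problem \eqref{eq:ns thed}, written at level $n$ (which is legitimate because \eqref{eq:ns stab} is a time-marching scheme and $\AstabGn$, $\rgn$ are obtained by summing the per-slab contributions from $j=0,\dots,n$), one has $\AstabGn(\uht,\uht;\,\uht,\eea) = \rgn(\uht_0, \uht;\, \eea)$. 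Similarly, by the strong consistency identity \eqref{eq:ns thec} restricted to the first $n+1$ time slabs (same argument, using Assumption \textbf{(A1)}), one has $\AstabGn(\uu,\uht;\,\uu,\eea) = \rgn(\uu_0, \uht;\, \eea)$. Subtracting these, the right-hand sides differ only by $(\uht_0 - \uu_0, \eea^0_+) = \err0$, since $\rgn$ is linear in its first argument.

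It then remains to rewrite the difference $\AstabGn(\uu,\uht;\,\uu,\eea) - \AstabGn(\uht,\uht;\,\uu,\eea)$. Here the time-derivative, jump and initial contributions cancel because they do not involve the two ``coefficient'' arguments, so the whole difference reduces to $\int_0^{t_{n+1}}[\KstabG(\uu,\uht;\,\uu,\eea) - \KstabG(\uht,\uht;\,\uu,\eea)]\,{\rm d}t$. Inside $\KstabG$ the terms $\nu a(\uu,\eea)$ and $\LstabG(\uht;\,\uu,\eea)$ are independent of the first coefficient slot and drop out. The remaining terms reassemble exactly into $\errc + \errC + \errJ$ by linearity of $c(\cdot;\,\uu,\vv)$, of $\CstabG(\cdot,\uht;\,\uu,\vv)$, and of $\JstabG(\cdot,\uht;\,\uu,\vv)$ in their first argument (with the $c$-piece $c(\uu;\,\uu,\eea)-c(\uht;\,\uu,\eea) = c(\uu-\uht;\,\uu,\eea)$, and similarly for $\CstabG$ and $\JstabG$).

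Collecting the pieces yields $\AstabGn(\uht,\uht;\,\eea,\eea) = \err0 + \errc + \errC + \errJ + \errA$, and the conclusion follows from Proposition~\ref{prp:coercivity} applied with $\wht = \uht$ and test function $\vht = \eea$, noting $\|\cdot\|_{\nstabn} = \|\cdot\|_{\uht,n}$ by \eqref{eq:addedlater}. There is no real obstacle here: the only mildly delicate point is justifying that the consistency \eqref{eq:ns thec}, stated at level $N$, may be truncated at any intermediate level $n$ — which follows from the fact that both the continuous and discrete problems decouple across time slabs in the DG formulation.
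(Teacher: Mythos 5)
Your argument is correct and coincides with the paper's own proof: the same decomposition $\eea=\uht-\uui$, the same use of the discrete equation and of the consistency identity truncated at level $n$, the same cancellation of the $a$- and $\LstabG$-terms, and the final appeal to Proposition~\ref{prp:coercivity}. The only cosmetic remark is that $\CstabG$ is affine (not linear) in its first slot because of the $\uu_t$ term, but this is immaterial since $\errC$ is defined directly as the difference $\CstabG(\uu,\uht;\,\uu,\eea)-\CstabG(\uht,\uht;\,\uu,\eea)$.
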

\begin{proof}
. \,

Employing Proposition \ref{prp:coercivity}, problem equations \eqref{eq:ns thec} and \eqref{eq:ns thed} and definitions \eqref{eq:AstabG} and \eqref{eq:rg}, direct computations yield
\[
\begin{aligned}
\frac{1}{2}&\|\eea\|^2_{\nstabn} \leq 
\AstabGn(\uht, \uht; \, \eea, \eea) = \AstabGn(\uht, \uht; \, \uht - \uui, \eea) 
\\
&=
\rgn(\uht_0, \uht; \, \eea) - \rgn(\uu_0, \uht; \, \eea) + 
\AstabGn(\uu, \uht; \, \uu, \eea)  -
\AstabGn(\uht, \uht; \, \uui, \eea) 
\\
&=
(\uht_0 - \uu_0, \eea^+_0) + 
\AstabGn(\uu, \uht; \, \uu, \eea)  - \AstabGn(\uht, \uht; \, \uu, \eea) +
\AstabGn(\uht, \uht; \, \eei, \eea) 
\\
&=
\err0 + \errc + \errC + \errJ +  \errA \,.
\end{aligned}
\]
\end{proof}

\begin{proposition}
\label{prp:err-A}
The terms $\err0$, $\errc$, $\errC$ and $\errJ$ in \eqref{eq:err-quan} can be bounded as follows
\[
\err0 + \errc + \errC + \errJ \leq
\Lambda \, \sigma +
\Lambda(1 + \delta h^{-2})\|\eea\|_{L^2(0, t_{n+1}, L^2(\Omega))}^2 
+ \frac{1}{8} \|\eea\|^2_{\nstabn} \,.
\]
\end{proposition}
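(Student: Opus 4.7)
The strategy is to bound the four quantities separately, in each case splitting $\uu-\uht=\eei-\eea$ where convenient and applying Cauchy--Schwarz followed by Young's inequality. The $\eei$ contributions are controlled via the approximation estimates \eqref{eq:err-int-1}--\eqref{eq:err-int-7} and absorbed into $\Lambda\sigma$; the $\eea$ contributions either feed $\Lambda\|\eea\|^2_{L^2(L^2)}$ (with or without the $\delta h^{-2}$ factor) or, after Young's, a small constant times $\|\eea\|_{\nstabn}^2$. Splitting the budget evenly, each term contributes at most $\frac{1}{32}\|\eea\|_{\nstabn}^2$, summing to the stated $\frac{1}{8}\|\eea\|_{\nstabn}^2$.

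For $\err0$, Cauchy--Schwarz and Young's give $|\err0|\le C\|\uht_0-\uu_0\|^2+\frac{1}{32}\|\eea^0_+\|^2$; since $\uht_0$ is taken to be a (quasi-)projection of $\uu_0$ the first summand is bounded by $\Lambda h^{2\reg+2}\le \Lambda\sigma$, and the second is absorbed in $\|\eea\|_{\nstabn}^2$. For $\errc$, the pointwise bound $|c(\ww;\uu,\eea)|\le\|\Gr\uu\|_{L^\infty}\|\ww\|\|\eea\|$, applied to $\ww=\eei$ and $\ww=\eea$ with integration in time, Cauchy--Schwarz and Young's, yields
\[
|\errc|\lesssim \|\Gr\uu\|^2_{L^2(L^\infty)}\|\eei\|^2_{L^2(L^2)}+\|\eea\|^2_{L^2(L^2)}\le \Lambda\sigma+\Lambda\|\eea\|^2_{L^2(L^2)},
\]
using \eqref{eq:err-int-2} and regularity $(iii)$ of \textbf{(A2)}.

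For $\errC$, direct computation gives
\[
\errC=\int_0^{t_{n+1}}\sum_{E\in\Omega_h}\delta_E\int_E \cc\bigl((\Gr\uu)(\uu-\uht)\bigr)\,\cc\bigl(\eea_t+(\Gr\eea)\uht\bigr)\,\mathrm{d}E\,\mathrm{d}t,
\]
which by Cauchy--Schwarz in the $\delta_E$-weighted scalar product is controlled by $\bigl(\sum_E \delta_E\|\cc((\Gr\uu)(\uu-\uht))\|^2_{L^2(L^2(E))}\bigr)^{1/2}\|\eea\|_{\nstabn}$. Splitting $\uu-\uht=\eei-\eea$ and using the Leibniz-type bound $|\cc((\Gr\uu)\ww)|_E\lesssim|\uu|_{W^2_\infty(E)}(|\ww|_E+|\Gr\ww|_E)$, the $\eei$-piece is bounded by $\Lambda\delta(\tau^4+h^{2\reg})\le\Lambda\sigma$ by \eqref{eq:err-int-2}--\eqref{eq:err-int-3}; for the $\eea$-piece the inverse estimate (Lemma \ref{lm:inverse}) replaces $\|\Gr\eea\|_E$ by $h_E^{-1}\|\eea\|_E$, producing the factor $\delta h^{-2}$. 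A Young's inequality, splitting $\errC$ into its two pieces, yields $|\errC|\le \Lambda\sigma+\Lambda\delta h^{-2}\|\eea\|^2_{L^2(L^2)}+\frac{1}{16}\|\eea\|^2_{\nstabn}$. The term $\errJ$ is treated analogously: Cauchy--Schwarz extracts $\|\eea\|_{\nstabn}$ from the second jump factor, and Remark \ref{rm:jumps} applied to the first factor (with $K=\Gr\uu\in L^\infty$ by \textbf{(A2)}) gives $\sum_E h_E^2\|\jump{(\Gr\uu)(\uu-\uht)}\|^2_{\partial E}\lesssim \|\Gr\uu\|^2_\infty\sum_E(h_E\|\uu-\uht\|^2_E+h_E^3\|\Gr(\uu-\uht)\|^2_E)$; the $\eei$-part becomes part of $\Lambda\sigma$ and the $\eea$-part produces a further $\Lambda(1+\delta h^{-2})\|\eea\|^2_{L^2(L^2)}$ plus an absorbable $\frac{1}{16}\|\eea\|^2_{\nstabn}$. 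Collecting all four contributions gives the claim.

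\textbf{Main obstacle.} The delicate point is the $\eea$ contribution inside the stabilizer forms $\errC$ and $\errJ$: we have no useful direct control on $\|\Gr\eea\|_{L^2(L^2)}$ (the only gradient control in $\|\cdot\|_{\nstabn}$ is the weak dissipative $\nu\|\epseps(\eea)\|^2$, which cannot be used $\nu$-independently), so we must trade the gradient for $h^{-1}$ via the discrete inverse inequality. This is precisely the mechanism producing the $\delta h^{-2}$ weight in the final estimate and is the reason why the mesh quasi-uniformity assumption \textbf{(M2)} and the specific scaling of $\delta_E$ in \eqref{eq:delta} are needed to keep this factor under control in the subsequent Gronwall-type argument.
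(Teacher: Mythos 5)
Your proposal is correct, and for $\err0$, $\errc$ and $\errC$ it follows essentially the same route as the paper: Cauchy--Schwarz/Young, the splitting $\uu-\uht=\eei-\eea$, the Leibniz-type bound on $\cc((\Gr\uu)\ww)$, the interpolation estimates \eqref{eq:err-int-1}--\eqref{eq:err-int-3} for the $\eei$ pieces, and the inverse estimate trading $\|\Gr\eea\|_E$ for $h_E^{-1}\|\eea\|_E$ --- which, as you correctly identify in your closing remark, is exactly the mechanism producing the $\delta h^{-2}$ weight. The one genuine divergence is the treatment of $\errJ$. The paper observes that, by item $iv)$ of Assumption \textbf{(A1)}, $\Gr\uu(\cdot,t)$ has a single-valued trace across interior edges and $\uu(\cdot,t)-\uht(\cdot,t)$ is continuous, so $\jump{(\Gr\uu)(\uu-\uht)}=0$ on every edge (boundary jumps being zero by convention) and hence $\errJ=0$ identically --- the added jump stabilization contributes no consistency error at all. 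Your alternative, Cauchy--Schwarz followed by Remark \ref{rm:jumps} and an inverse estimate on the $\eea$ part, is also valid and lands within the stated bound: the $\eei$ part yields $\Lambda\,(h\tau^4+h^{2\reg+3})\le\Lambda\sigma$ and the $\eea$ part yields $\Lambda\, h\,\|\eea\|^2_{L^2(0,t_{n+1},L^2(\Omega))}$, both admissible. It is simply more laborious than the exact cancellation, which is the cleaner observation. A very minor bookkeeping point: your announced budget of $\tfrac{1}{32}\|\eea\|^2_{\nstabn}$ per term is not what you actually use later ($\tfrac{1}{16}$ for each of $\errC$ and $\errJ$), but since the absorbed quantities are distinct nonnegative summands of the norm \eqref{eq:norma} (the initial-value term, the curl-stabilization term, and the jump term respectively), the total still does not exceed $\tfrac18\|\eea\|^2_{\nstabn}$.
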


\begin{proof}
. \,
We estimate separately each term.

\noindent
$\bullet$ estimate of $\err0$.
Employing the Cauchy-Schwarz inequality and assuming that the approximation of the initial data $\uht_0$ realizes \eqref{eq:int est 2-a}, we get
\begin{equation}
\label{eq:err0}
\err0 \leq \|\uht_0 - \uu_0 \| \|\eea_+^0\| \leq 
  \Lambda h^{2 \reg+2} 
+ \frac{1}{16} \|\eea_+^0\|^2 \,.
\end{equation}

\noindent
$\bullet$ estimate of $\errc$. Simple computations yield
\[
\begin{aligned}
\errc &\leq \int_0^{t_{n+1}} |\uu|_{W^1_\infty(\Omega_h)}  \|\uu- \uht\|    \|\eea\| \,{\rm d}t
\leq
\int_0^{t_{n+1}} |\uu|_{W^1_\infty(\Omega_h)} \left( \|\eei\|^2 + 2  \|\eea\|^2 \right) \,{\rm d}t 
\\
& \leq
\|\uu\|_{L^\infty(0, t_{n+1}, W^1_\infty(\Omega_h))} \left( \|\eei\|^2_{L^2(0, t_{n+1}, L^2(\Omega))} +  2 \|\eea\|^2_{L^2(0, t_{n+1}, L^2(\Omega))} \right) \,.
\end{aligned}
\]
Therefore from \eqref{eq:err-int-2} we infer
\begin{equation}
\label{eq:errc}
\begin{aligned}
\errc &\leq 
\Lambda \left(\tau^{4} + h^{2 \reg + 2} \right)+  
\Lambda \|\eea\|^2_{L^2(0, t_{n+1}, L^2(\Omega))} \,.
\end{aligned}
\end{equation}

\noindent
$\bullet$ estimate of $\errC$. 
By the Cauchy-Schwarz inequality
\begin{equation}
\label{eq:errC-1}
\begin{aligned}
\errC &= \sum_{E \in \Omega_h}
\delta_E \int_0^{t_{n+1}}\int_E \cc((\Gr \uu)(\uu - \uht)) \, \cc(\eea_t + (\Gr \eea) \uht)\, {\rm d}E \, {\rm d}t 
\\
& \leq
 \sum_{E \in \Omega_h} 2{\delta_E}
\int_0^{t_{n+1}} \|\cc((\Gr \uu) (\uu - \uht))\|_E^2 \,{\rm d}t+ \\
& \qquad +
\frac{1}{8} \sum_{E \in \Omega_h} \delta_E 
\|\cc(\eea_t + (\Gr\eea) \uht) \|_{L^2(0, t_{n+1}, L^2(E))}^2  \,.
\end{aligned}
\end{equation}
Let $\zeta_{C,1}$ be the first term in the right-hand side of \eqref{eq:errC-1}. Direct computations, an inverse estimate (c.f. Lemma \ref{lm:inverse}), and bounds \eqref{eq:err-int-2} and \eqref{eq:err-int-3} yield 
\begin{equation}
\label{eq:errC-2}
\begin{aligned}
\zeta_{C,1} & 
\leq
\sum_{E \in \Omega_h} 2{\delta_E}
\int_0^{t_{n+1}}
\left(
|\uu|_{W^2_\infty(E)}^2\|\uu - \uht\|_E^2 + |\uu|_{W^1_\infty(E)}^2\|\Gr(\uu - \uht)\|_E^2 \right)
 \,{\rm d}t
 \\
& \leq
4 \sum_{E \in \Omega_h} \delta_E 
\int_0^{t_{n+1}}
\|\uu\|_{W^2_\infty(E)}^2 \left(
 \|\eei\|_E^2 + \|\eea\|_E^2 + \|\Gr\eei\|_E^2 + \|\Gr\eea\|_E^2 \right)
 \,{\rm d}t
  \\
& \leq
4 \sum_{E \in \Omega_h}  \delta_E 
\int_0^{t_{n+1}}  \|\uu\|_{W^2_\infty(E)}^2 \left(
 \|\eei\|_E^2 +  \|\Gr\eei\|_E^2 + C \,h_E^{-2}\|\eea\|_E^2 \right)
 \,{\rm d}t
\\
& \leq
4 \|\uu\|_{L^\infty(0, t_{n+1}, W^2_\infty(\Omega))}^2
\sum_{E \in \Omega_h}  \delta_E  \left(
 \|\eei\|_{L^2(0, t_{n+1}, L^2(E))}^2 + 
  \|\Gr\eei\|_{L^2(0, t_{n+1}, L^2(E))}^2 \right) +
\\
& \quad + 4C \, 
  \|\uu\|_{L^\infty(0, t_{n+1}, W^2_\infty(\Omega))}^2
\sum_{E \in \Omega_h} \delta_E 
h_E^{-2}\|\eea\|_{L^2(0, t_{n+1}, L^2(E))}^2 
\\
& \leq
\Lambda \delta \left(
 \tau^{4} + h^{2 \reg} \right) + 
\Lambda
\delta 
h^{-2}\|\eea\|_{L^2(0, t_{n+1}, L^2(\Omega))}^2 \,.
\end{aligned}
\end{equation}
%
Therefore from \eqref{eq:errC-1} and \eqref{eq:errC-2} we have
\begin{equation}
\label{eq:errC}
\errC 
\leq
\Lambda \delta \left(
 \tau^{4} + h^{2 \reg} \right) + 
\Lambda
\delta 
h^{-2}\|\eea\|_{L^2(0, t_{n+1}, L^2(\Omega))}^2 
+
\frac{1}{8} \sum_{E \in \Omega_h} \delta_E 
\|\cc(\eea_t + (\Gr\eea) \uht) \|_{L^2(0, t_{n+1}, L^2(E))}^2  .
\end{equation}

\noindent
$\bullet$ estimate of $\errJ$. 
By item $iv)$ in  Assumptions \textbf{(A1)}   {$\Gr \uu(\cdot, t) \in [C^0(\Omega)]^{2 \times 2}$ and $\uu(\cdot, t) - \uht(\cdot, t) \in [C^0(\Omega)]^2$ for all $t \in I$}, therefore we have
\begin{equation}
\label{eq:errJ}
\begin{aligned}
\errJ & = 0 \,.
\end{aligned}
\end{equation}
The thesis follows collecting \eqref{eq:err0}, \eqref{eq:errc}, \eqref{eq:errC}, \eqref{eq:errJ} and recalling definitions \eqref{eq:addedlater} and \eqref{eq:sigmaE}.
\end{proof}

\begin{proposition}
\label{prp:err-B}
The term $\errA$ in \eqref{eq:err-quan} can be bounded as follows
\[
\begin{aligned}
\errA &\leq  
16 \|\eei\|_{\nstabn}^2 +
\Lambda \sigma +
 \frac{1}{8} \|\eea\|_{\nstabn}^2 \,.
\end{aligned}
\]
\end{proposition}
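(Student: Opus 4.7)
My plan is to expand $\errA=\AstabGn(\uht,\uht;\eei,\eea)$ according to \eqref{eq:AstabG} and \eqref{eq:KtG} into the eight natural pieces: the time-derivative part $\int(\eei_t,\eea)\,dt$, the time-jump sum $\sum(\jump{\eei}_{t_j},\eea^j_+)$, the initial term $(\eei^0_+,\eea^0_+)$, and the five contributions coming from the form $\KstabG$ (namely $\nu a$, $c$, $\CstabG$, $\LstabG$, $\JstabG$). A preliminary key observation is that $\mathcal{P}\uu=\mathcal{I}^\tau\Pi_h\uu$ is continuous in time (since $\uu$ is), so $\eei\in C^0(\I,\ZCG)$, which kills every time-jump $\jump{\eei}_{t_j}$ and gives $\eei^0_+=\eei^0$.

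The bulk of the pieces are then handled by Cauchy–Schwarz followed by Young's inequality in the standard way, calibrating the constants so that the $\eea$-factors sum to at most $\tfrac{1}{8}\|\eea\|^2_{\nstabn}$. Concretely, for $\nu\int a(\eei,\eea)\,dt$ the bound $\nu\|\epseps(\eei)\|_{L^2(L^2)}\cdot\nu\|\epseps(\eea)\|_{L^2(L^2)}$ is split between $\|\eei\|^2_{\nstabn}$ and $\|\eea\|^2_{\nstabn}$; likewise the curl-cross form $\int\CstabG(\uht,\uht;\eei,\eea)\,dt$ and the jump-cross form $\int\JstabG(\uht,\uht;\eei,\eea)\,dt$ split into products of norm-pieces of $\eei$ and $\eea$ that live exactly inside $\|\cdot\|_{\nstabn}^2$. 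For the time-derivative and initial pieces I integrate by parts in time on each $I_j$, telescope using $\eei^j_+=\eei^j_-$, and arrive at $(\eei^{n+1},\eea^{n+1}_-)-\sum_{j=1}^n(\eei^j,\jump{\eea}_{t_j})-\int(\eei,\eea_t)\,dt$. The first two resulting boundary/jump terms are immediate via $\|\eea^{n+1}_-\|^2,\sum\|\jump{\eea}_{t_j}\|^2\le 2\|\eea\|^2_{\nstabn}$ combined with \eqref{eq:err-int-1}; the remaining $(\eei,\eea_t)$ integral is treated by the inverse-in-time estimate $\|\eea_t\|_{L^2(I_n,L^2)}\lesssim\tau^{-1}\|\eea\|_{L^2(I_n,L^2)}$ (valid since $\eea$ is piecewise linear in time) together with Remark \ref{rm:L2-stab} and \eqref{eq:err-int-2}.

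The $\LstabG$ piece is next: Cauchy–Schwarz isolates $\delta\nu^2\|\cc(\divv\epseps(\eei))\|^2_{L^2(L^2)}$, which by \eqref{eq:err-int-7} is bounded by $\Lambda\nu^2\delta(\tau^4+h^{2\reg-4})$, precisely the $\sigma$-entries $\nu^2\delta\tau^4$ and $\nu^2\delta h^{2\reg-4}$; the remaining factor is of the curl type and is absorbed into $\tfrac{1}{16}\|\eea\|^2_{\nstabn}$. The convection term $\int c(\uht;\eei,\eea)\,dt$ is the key obstacle: one must avoid any inverse power of $\nu$ and not introduce bad mesh factors. The way out is to keep the $L^\infty_x$ regularity on $\Gr\eei$ so that Hölder gives $\|\Gr\eei\|_{L^2(L^\infty)}\,\|\uht\|_{L^\infty(L^2)}\,\|\eea\|_{L^2(L^2)}$; then \eqref{eq:err-int-5} controls the $\eei$-factor by $\Lambda(\tau^2+h^{\reg})$, Corollary \ref{cor:stability} gives $\|\uht\|_{L^\infty(L^2)}^2\lesssim\cfun{DATA}(n)$ which is absorbed into $\Lambda$, and Remark \ref{rm:L2-stab} relates $\|\eea\|_{L^2(L^2)}$ to $\|\eea\|_{\nstabn}$. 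Young's inequality then yields a contribution of order $\tau^4+h^{2\reg}$ that is majorized by entries of $\sigma$ (e.g.\ $h^{2\reg+1}$ combined with the other $\tau^4$-containing terms) plus $\tfrac{1}{16}\|\eea\|^2_{\nstabn}$.

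Finally, collecting all estimates, every $\eei$-contribution is either literally of the form $\|\eei\|^2_{\nstabn}$ (coming from the $\nu a$, $\CstabG$, $\JstabG$ splits) or fits one of the monomials listed in \eqref{eq:sigmaE}; all $\eea$-contributions are controlled by $\tfrac{1}{8}\|\eea\|^2_{\nstabn}$ in total; and the tracked numerical constants can be organized so that the $\|\eei\|^2_{\nstabn}$ coefficient does not exceed $16$. The delicate step, as flagged, is the convection term, where the correct pairing of Hölder exponents and the use of Corollary \ref{cor:stability} are essential to retain the $\nu$-robust and pressure-robust character of the estimate.
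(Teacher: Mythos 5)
Your decomposition of $\errA$ and your handling of the terms $\nu a(\eei,\eea)$, $\CstabG$, $\JstabG$, $\LstabG$ and of the vanishing time-jumps of $\eei$ coincide with the paper's treatment of $\alpha_1$--$\alpha_5$ (including the use of \eqref{eq:err-int-7} for the $\LstabG$ piece). The proposal fails, however, on precisely the three pieces that carry the whole difficulty: $\sum_j(\eei^j,\jump{\eea}_{t_j})$, $\int(\eei,\eea_t)$ and $\int c(\uht;\eei,\eea)$.

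First, bounding $\sum_{j=1}^n(\eei^j,\jump{\eea}_{t_j})$ by Cauchy--Schwarz with \eqref{eq:err-int-1} costs a factor $n^{1/2}\sim\tau^{-1/2}$ from the sum, and after Young this leaves a term of order $\tau^{-1}h^{2\reg+2}$, which is not majorized by $\sigma$ (this is exactly the loss described in Remark \ref{rem:no-quasi}). The paper instead notes that $\jump{\eea}_{t_j}\in\ZDG$ while $\eei^j=\uu(\cdot,t_j)-\Pi_h\uu(\cdot,t_j)$ is $L^2$-orthogonal to $\ZDG$ by \eqref{eq:P0_Z}, so the sum is exactly zero; this orthogonality is the very reason the global projector $\Pi_h$ (and hence \textbf{(M2)}) is used. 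Second, your inverse-in-time estimate $\|\eea_t\|_{L^2(L^2)}\lesssim\tau^{-1}\|\eea\|_{L^2(L^2)}$ applied to $\int(\eei,\eea_t)$ produces, after Young, terms of order $\tau^2+\tau^{-2}h^{2\reg+2}$, neither of which appears in \eqref{eq:sigmaE}. Third, the direct H\"older bound of the convection term via \eqref{eq:err-int-5} yields a contribution of order $h^{2\reg}$, which is \emph{larger} than the target $h^{2\reg+1}$ in the convection-dominated regime and would destroy the half-order gain; it is not ``majorized by $h^{2\reg+1}$'' as you assert. The paper's essential idea, absent from your argument, is that $(\eea_t,\eei)$ and $c(\uht;\eea,\eei)$ (after using skew-symmetry) must be treated \emph{together}: writing $\eei=-\CC\ssi$ with the stream function $\ssi$, whose norms gain two extra powers of $h$ over $\eei$ (cf. \eqref{hottie}), and integrating by parts in space converts the pair into $\sum_E(\cc(\eea_t+(\Gr\eea)\uht),\ssi)_E$ plus edge-jump terms, which are then paired against the stabilization contributions of $\|\eea\|_{\nstabn}$ with weights $\delta_E^{-1}$ and $h_E^{-2}$. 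This is what generates the admissible entries $\delta^{-1}(\tau^4+h^{2\reg+4})$ and $h^{-3}(\tau^4+h^{2\reg+4})$ of $\sigma$. Without this combined stream-function argument the stated bound cannot be reached.
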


\begin{proof}
. \,
We split $\errA$ as the sum of the following terms
\begin{equation}
\label{eq:err-B-0}
\begin{aligned}
\errA &= 
\int_0^{t_{n+1}} \nu a(\eei, \eea) \,{\rm dt} +
\int_0^{t_{n+1}} \CstabG(\uht, \uht; \,\eei, \eea) \,{\rm dt}  +
\int_0^{t_{n+1}} \JstabG(\uht, \uht; \,\eei, \eea) \,{\rm dt}
\\
&  +
\int_0^{t_{n+1}} \LstabG(\uht; \,\eei, \eea) \,{\rm dt} +
\sum_{j=1}^n  (\jump{\eei}_{t_j}, \eea^j_+) +
\left((\eei^0_+, \eea^0_+) +
\int_0^{t_{n+1}} \left( (\eei_t, \eea)  + 
 c(\uht; \, \eei, \eea) \right) \,{\rm d}t \right)
\\
& =: \sum_{i=1}^6 \alpha_i \,.  
\end{aligned}
\end{equation}
We now estimate each term in the sum above.
Let, for the time being, $\gamma$ be a positive real to be chosen at the end of the proof.

\noindent
$\bullet$ estimate of $\alpha_1$, $\alpha_2$, $\alpha_3$. Cauchy-Schwarz inequality yields
\begin{equation}
\label{eq:err-B-1}
\begin{aligned}
\alpha_1 & 
\leq 
\frac{1}{4\gamma}\|\eei\|_{\nstabn}^2 +   
\gamma \nu\|\epseps(\eea)\|_{L^2(0, t_{n+1}, L^2(\Omega))}^2 
\\
\alpha_2  
&\leq 
\frac{3}{4\gamma}\|\eei\|_{\nstabn}^2  +
\frac{\gamma}{3} 
\sum_{E \in \Omega_h} \delta_E 
\|\cc(\eea_t + (\Gr\eea)\uht)\|_{L^2(0, t_{n+1}, L^2(E))}^2
\\
\alpha_3 &
\leq 
\frac{1}{2\gamma}\|\eei\|_{\nstabn}^2 +   
\frac{\gamma}{2} 
\sum_{E \in \Omega_h} \frac{h_E^2}{2} 
\|\jump{(\Gr \eea) \uht)}\|_{L^2(0, t_{n+1}, L^2(\partial E))}^2 \,.
\end{aligned}
\end{equation}

\noindent
$\bullet$ estimate of $\alpha_4$. Employing again Cauchy-Schwarz inequality, from \eqref{eq:err-int-7} we infer
\begin{equation}
\label{eq:err-B-4}
\begin{aligned}
\alpha_4
&\leq 
\frac{\gamma}{3} 
\sum_{E \in \Omega_h} \delta_E 
\|\cc(\eea_t +(\Gr\eea)\uht)\|_{L^2(0, t_{n+1}, L^2(E))}^2 +
\\
& \qquad \qquad +    
\frac{3}{4\gamma} \sum_{E \in \Omega_h}\delta_E \nu^2 \int_0^{t_{n+1}}\|\cc(\divv(\epseps(\eei)))\|_E^2 \,{\rm d}t
\\
& \leq
\frac{\gamma}{3} 
\sum_{E \in \Omega_h} \delta_E 
\|\cc(\eea_t +(\Gr\eea)\uht)\|_{L^2(0, t_{n+1}, L^2(E))}^2  +    
\frac{3}{4\gamma}  \sum_{E \in \Omega_h}\delta_E \nu^2 \|\eei\|_{L^2(0, t_{n+1}, H^3(E))}^2 
\\
& \leq
\frac{\gamma}{3} 
\sum_{E \in \Omega_h} \delta_E 
\|\cc(\eea_t +(\Gr\eea)\uht)\|_{L^2(0, t_{n+1}, L^2(E))}^2  +    
\frac{3}{4\gamma} \delta \nu^2 \|\eei\|_{L^2(0, t_{n+1}, H^3(\Omega_h))}^2 
\\
& \leq
\frac{\gamma}{3} 
\sum_{E \in \Omega_h} \delta_E 
\|\cc(\eea_t +(\Gr\eea)\uht)\|_{L^2(0, t_{n+1}, L^2(E))}^2  +    
\Lambda \delta \nu^2 \left(\tau^4 + h^{2\reg - 4} \right)  \,.
\end{aligned}
\end{equation}

\noindent
$\bullet$ estimate of $\alpha_5$. Since   $\eei \in C^0(\I, \ZCG)$ we have
\begin{equation}
\label{eq:err-B-5}
\alpha_5 = \sum_{j=1}^n  (\jump{\eei}_{t_j}, \eea^j_+) = 0 \,.
\end{equation}

\noindent
$\bullet$ estimate of $\alpha_6$. 
Integrating by parts in time and recalling \eqref{eq:c-skew} we infer
\begin{equation}
\label{eq:err-B-6-0}
\begin{aligned}
\alpha_6 &=
(\eea^{n+1}_-, \eei^{n+1}_-) -  \sum_{j=1}^n (\jump{\eea}_{t_j}, \eei^j) 
- \int_0^{t_{n+1}} \left( (\eea_t, \eei)  + 
 c(\uht; \, \eea, \eei) \right) \,{\rm d}t  
 \,.
\end{aligned}
\end{equation}
We estimate each term in the sum above.
For the first term, the Cauchy-Schwarz inequality yields
\begin{equation}
\label{eq:err-B-6-1}
(\eea^{n+1}_-, \eei^{n+1}_-) 
\leq \frac{1}{2\gamma} \|\eei\|_{\nstabn}^2 +  
\frac{\gamma}{2}  \|\eea^{n+1}_-\|^2  \,.
\end{equation}
Concerning the second term, recalling the definition of the $L^2$-projection operator $\Pi_h$, we have
\begin{equation}
\label{eq:err-B-6-2}
\sum_{j=1}^n (\jump{\eea}_{t_j}, \eei^j) = 
\sum_{j=1}^n \bigl(\jump{\eea}_{t_j}, \uu(\cdot, t_j) - \Pi_h(\uu(\cdot, t_j)) \bigr) = 
0 \,.
\end{equation}
For the last term, since $\eei \in C^0((0, t_{N+1}), \ZCG)$, from \eqref{eq:exact cont} it exists $\ssi \in C^0((0, t_{N+1}), \PCG)$ such that
$\CC \ssi = -\eei$, satisfying corresponding approximation estimates 
\begin{equation}\label{hottie}
\begin{aligned}
& \|\ssi\|^2_{L^2(0, t_{n+1}, L^2(\Omega))}
\leq
\Lambda (\tau^{4} + h^{2\reg + 4})  \\
& \|\nabla \ssi\|^2_{L^2(0, t_{n+1}, L^2(\Omega))}  \leq
\Lambda (\tau^{4} + h^{2\reg + 2}) \, .
\end{aligned}
\end{equation}
First introducing the above stream function,
then integrating by parts and employing the Cauchy-Schwarz inequality we obtain
\begin{equation}
\label{eq:err-B-6-3}
\begin{aligned}
&\int_0^{t_{n+1}} \left( (\eea_t, \CC\ssi)  + 
 c(\uht; \, \eea, \CC\ssi) \right) \,{\rm d}t 
 =
 \\
& =
\int_0^{t_{n+1}} \sum_{E \in \Omega_h} \biggl(
(\cc\eea_t, \ssi)_E + (\cc \left((\Gr \eea)\uht\right), \ssi)_E -
(\jump{(\Gr \eea)\uht} \cdot\tf^E, \ssi)_{\partial E}
\biggr)\,{\rm d}t 
\\
& = 
\int_0^{t_{n+1}} \sum_{E \in \Omega_h} 
 (\cc \left(\eea_t + (\Gr \eea)\uht\right), \ssi)_E
 \,{\rm d}t 
  -
 \int_0^{t_{n+1}} \sum_{E \in \Omega_h} 
(\jump{(\Gr \eea)\uht} \cdot\tf^E, \ssi)_{\partial E}
\,{\rm d}t 
\\
& \leq
\int_0^{t_{n+1}} \sum_{E \in \Omega_h} \biggl(
\frac{\gamma}{3}\delta_E \|\cc \left(\eea_t + (\Gr \eea)\uht\right)\|_E^2 + \frac{3}{4\delta_E \gamma} \|\ssi\|_E^2 
\biggr)\,{\rm d}t +
\\
& \qquad +
\int_0^{t_{n+1}} \sum_{E \in \Omega_h} \biggl(
\frac{\gamma}{2} \frac{h_E^2}{2}\|\jump{(\Gr \eea)\uht}\|_{L^2(\partial E)}^2 + \frac{1}{h_E^2 \gamma} \|\ssi\|_{\partial E}^2
\biggr)\,{\rm d}t 
\\ 
& \leq
\frac{\gamma}{3} 
\sum_{E \in \Omega_h} \delta_E 
\|\cc(\eea_t +(\Gr\eea)\uht)\|_{L^2(0, t_{n+1}, E)}^2  +
\frac{\gamma}{2} 
\sum_{E \in \Omega_h} \frac{h_E^2}{2} 
\|\jump{(\Gr \eea) \uht)}\|_{L^2(0, t_{n+1}, L^2(\partial E))}^2 + 
\\
& \qquad +
\sum_{E \in \Omega_h}
\frac{3}{4\delta_E \gamma}
\|\ssi\|_{L^2(0, t_{n+1}, L^2(E))}^2 +
\int_0^{t_{n+1}} \sum_{E \in \Omega_h}  \frac{1}{h_E^2 \gamma} \|\ssi\|^2_{\partial E}
\,{\rm d}t 
\,.
\end{aligned}
\end{equation}
Employing \eqref{hottie} we have
\begin{equation}
\label{eq:err-B-6-4}
\begin{aligned}
\sum_{E \in \Omega_h}
\frac{3}{4\delta_E \gamma}
\|\ssi\|_{L^2(0, t_{n+1}, L^2(E))}^2
& \leq 
C \delta^{-1}
\|\ssi\|_{L^2(0, t_{n+1}, L^2(\Omega))}^2 
\leq  
\Lambda \delta^{-1}  (\tau^{4} + h^{2\reg + 4}) \,.
\end{aligned} 
\end{equation}
From the trace inequality \ref{lm:trace} and employing again \eqref{hottie}
\begin{equation}
\label{eq:err-B-6-5}
\begin{aligned}
\int_0^{t_{n+1}} \sum_{E \in \Omega_h}  \frac{1}{h_E^2 \gamma} \|\ssi\|_{\partial E}^2 
&  \leq C \,
\int_0^{t_{n+1}}  \! \sum_{E \in \Omega_h}  \left(
h_E^{-3}  \|\ssi\|_E^2 +
h_E^{-1}  \|\nabla \ssi\|_E^2 \right) \,{\rm d}t
\\
&  \leq C \, 
h^{-3}   \|\ssi\|^2_{L^2(0, t_{n+1}, L^2(\Omega))} + C \,
h^{-1}  \|\nabla \ssi\|^2_{L^2(0, t_{n+1}, L^2(\Omega))} 
\\
& \leq
\Lambda \,   h^{-3}  (\tau^{4} + h^{2\reg + 4}) \,.
\end{aligned} 
\end{equation}
Therefore from \eqref{eq:err-B-6-0}--\eqref{eq:err-B-6-5} we obtain
\begin{equation}
\label{eq:err-B-6}
\begin{aligned}
\alpha_6 &\leq  
\frac{\gamma}{3} 
\sum_{E \in \Omega_h} \delta_E 
\|\cc(\eea_t +(\Gr\eea)\uht)\|_{L^2(0, t_{n+1}, E)}^2 +
\frac{\gamma}{2} 
\sum_{E \in \Omega_h} \frac{h_E^2}{2} 
\|\jump{(\Gr \eea) \uht)}\|_{L^2(0, t_{n+1}, L^2(\partial E))}^2 +
\\
& \qquad \qquad + 
\frac{\gamma}{2}  \|\eea^{n+1}_-\|^2 +
 \frac{1}{2\gamma} \|\eei\|_{\nstabn}^2 +
\Lambda \,   (\delta^{-1} +  h^{-3}) (\tau^{4} + h^{2\reg + 4}) \,.
\end{aligned}
\end{equation}
The proof follows collecting \eqref{eq:err-B-1}, \eqref{eq:err-B-4}, \eqref{eq:err-B-5} and \eqref{eq:err-B-6} in \eqref{eq:err-B-0} and by choosing $\gamma=1/8$.

\end{proof}


We are now ready to state the following convergence result.
\begin{proposition}
\label{prp:error-eea}
Let $\uu \in C^0((0,t_{N+1}), \ZCG)$ and $\uht \in \ZDGt$ be the solutions of problem \eqref{eq:ns variazionale ker} and problem \eqref{eq:ns thed}, respectively.
Under the mesh Assumptions \cfan{(M1)}, \cfan{(M2)} and regularity Assumptions  \cfan{(A2)},
 if the family of  parameters $\{\delta_E\}_{E \in \Omega_h}$ satisfies \eqref{eq:delta_E} and $\tau \lesssim 1$ it holds that

\begin{equation}
\label{eq:eea}
\|\uu - \uht\|^2_{\nstabn} \lesssim
\sigma \,.
\end{equation}
\end{proposition}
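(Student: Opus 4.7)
The plan is to put together Propositions \ref{prp:error}, \ref{prp:err-A}, \ref{prp:err-B}, and \ref{prp:epi} to produce an abstract inequality for $\|\eea\|_{\nstabn}^2$, then close the argument via the $L^2$-control of Lemma \ref{lm:1} and a discrete Gronwall step (Proposition \ref{prp:gronwall}). Finally, the triangle inequality together with Proposition \ref{prp:epi} converts the estimate on $\eea$ into one on $\uu - \uht$.

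First I would chain the three error bounds: from Proposition \ref{prp:error} we have $\tfrac{1}{2}\|\eea\|_{\nstabn}^2 \le \err0 + \errc + \errC + \errJ + \errA$, and substituting the bounds of Propositions \ref{prp:err-A} and \ref{prp:err-B} yields
\[
\tfrac{1}{2}\|\eea\|_{\nstabn}^2 \le \Lambda\sigma + \Lambda(1+\delta h^{-2})\,\|\eea\|_{L^2(0,t_{n+1},L^2(\Omega))}^2 + 16\,\|\eei\|_{\nstabn}^2 + \tfrac{1}{4}\|\eea\|_{\nstabn}^2 \,.
\]
Absorbing the last term on the right-hand side into the left-hand side and invoking Proposition \ref{prp:epi} together with Corollary \ref{cor:stability} (which shows that $\texttt{DATA}(n)$ is controlled by the data uniformly in $n$, hence can be absorbed into $\Lambda$), we arrive at an inequality of the form
\[
\|\eea\|_{\nstabn}^2 \le \Lambda\sigma + C(1+\delta h^{-2})\,\|\eea\|_{L^2(0,t_{n+1},L^2(\Omega))}^2 \,.
\]
Here the key observation is that, thanks to the specific choice \eqref{eq:delta}, we have $\delta h^{-2} \lesssim h \lesssim 1$, so the factor $(1+\delta h^{-2})$ is bounded by a constant independent of $h$, $\tau$ and $\nu$.

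Next I would convert the time-integrated $L^2$-term into a discrete sum. Applying Lemma \ref{lm:1} (with $\wht = \uht$) gives $\|\eea\|_{L^2(0,t_{n+1},L^2(\Omega))}^2 \lesssim \tau \sum_{j=1}^{n} \|\eea\|_{\nstabj}^2$, and therefore
\[
\|\eea\|_{\nstabn}^2 \le \Lambda\sigma + C\tau \sum_{j=1}^{n} \|\eea\|_{\nstabj}^2 \,.
\]
Now applying the discrete Gronwall inequality (Proposition \ref{prp:gronwall}) with $y_n := \|\eea\|_{\nstabn}^2$, $f_n := \Lambda\sigma$, and $g_j := C\tau$, and noting that $\sum_{k=j+1}^{n-1} g_j \le C\tau N \le C T$ is uniformly bounded (so the exponential factor is harmless), one obtains $\|\eea\|_{\nstabn}^2 \lesssim \sigma$ uniformly in $n$.

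The final step is the triangle inequality. Since $\uu - \uht = \eei - \eea$ and the norm $\|\cdot\|_{\nstabn}$ is subadditive in its first argument, we conclude
\[
\|\uu-\uht\|_{\nstabn}^2 \le 2\,\|\eei\|_{\nstabn}^2 + 2\,\|\eea\|_{\nstabn}^2 \lesssim \sigma \,,
\]
using Proposition \ref{prp:epi} for the interpolation error and the Gronwall bound for the discrete error. The main delicate points are (i) verifying that the combination $\delta h^{-2}$ stays bounded, which is precisely what motivates the choice \eqref{eq:delta} for the stabilization parameter, and (ii) ensuring that the constant in the Gronwall exponential is $\nu$-, $h$- and $\tau$-independent; both are guaranteed here because the coefficient $C\tau$ summed $n \le N$ times yields $\tau N \le T$, a finite constant.
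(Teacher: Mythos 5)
Your proposal is correct and follows essentially the same route as the paper: combine Propositions \ref{prp:error}, \ref{prp:err-A}, \ref{prp:err-B}, absorb the $\frac14\|\eea\|^2_{\nstabn}$ term, control $16\|\eei\|^2_{\nstabn}$ via Proposition \ref{prp:epi}, note $\delta h^{-2}\lesssim h\lesssim 1$ from \eqref{eq:delta}, pass to a discrete sum with Lemma \ref{lm:1}, apply the discrete Gronwall inequality, and finish with the triangle inequality. The only cosmetic difference is that your Gronwall sum runs to $j=n$ rather than $j=n-1$, so you should note (as the paper implicitly does via $\tau\lesssim 1$) that the $j=n$ term $C\tau\|\eea\|^2_{\nstabn}$ is absorbed into the left-hand side before invoking Proposition \ref{prp:gronwall}.
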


\begin{proof}
. \,
From  Proposition \ref{prp:error}, Proposition \ref{prp:err-A} and Proposition \ref{prp:err-B} we have
\[
\frac{1}{4} \|\eea\|^2_{\nstabn}  \leq
\Lambda \sigma + 
\Lambda (1 + \delta h^{-2}) \|\eea\|^2_{L^2(0, t_{n+1}, L^2(E))} +
16 \|\eei\|^2_{\nstabn} \,.
\]
By definition \eqref{eq:delta} the parameter  $\delta \lesssim h^3$, then using 
Proposition \ref{prp:epi} and recalling that $\Lambda$ and $\texttt{DATA}(n)$ are independent of $h$, $\tau$, $\nu$
\[
\begin{aligned}
\|\eea\|^2_{\nstabn} &\lesssim
\sigma + \|\eea\|^2_{L^2(0, t_{n+1}, L^2(\Omega))}
\,.
\end{aligned}
\]
By Lemma \ref{lm:1} and since $\tau \lesssim 1$ 
\[
\begin{aligned}
\|\eea\|^2_{\nstabn}
& \lesssim		
\sigma + 
\tau \sum_{j=1}^{n-1}\|\eea\|_{\nstabj}^2
 \,.
\end{aligned}
\]
Then employing the discrete Gronwall inequality (Proposition \ref{prp:gronwall})
\begin{equation}
\label{eq:eea-finale}
\begin{aligned}
\|\eea\|^2_{\nstabn} 
& \lesssim		
\sigma + 
\sum_{j=1}^{n-1}
\sigma \tau
 \lesssim		
\sigma  \,.
\end{aligned}
\end{equation}
The thesis follows from Proposition \ref{prp:epi}, bound \eqref{eq:eea-finale} and the triangular inequality.
\end{proof}

Let us analyse the asymptotic order of convergence for the proposed method both in the convective and in the diffusion dominated regime. Let us set
\begin{equation}
\label{eq:tau-h}
\begin{aligned}
&\tau \lesssim h^{(\reg+2)/2} \quad &\text{if $\reg \leq 4$,}
\\
&\tau \lesssim h^{\reg-1} \quad &\text{if $\reg > 4$.}
\end{aligned}
\end{equation}
In accordance with \eqref{eq:delta} and assuming maximal
regularity, so that $\reg = k$, we have the following cases

\begin{itemize}
\item \texttt{convection dominated regime:} $\nu \ll h$ then $\delta \approx h^3$ 
\begin{equation}
\label{eq:sigmaE-conv}
\sigma := \max\{ 
h^{3}\tau^{2}, \, 
h^{-3}\tau^{4}, \,
h^{2 k+1}
 \} 
\end{equation}
therefore
\begin{equation}
\label{eq:error-conv}
\|\uu - \uht\|^2_{\nstabn} \lesssim h^{2k+1} \,.
\end{equation}

\item \texttt{diffusion dominated regime:} $h \lesssim \nu$ then $\delta \approx \frac{h^4}{\nu}$
\begin{equation}
\label{eq:sigmaE-diff}
\sigma := \max\{
\nu h^{-4}\tau^{4}, \,
\nu h^2 \tau^{2}, \,
\nu h^{2 k}
 \} 
\end{equation}
therefore
\begin{equation}
\label{eq:error-diff}
\|\uu - \uht\|^2_{\nstabn} \lesssim \nu h^{2k} \,.
\end{equation}
\end{itemize}
We conclude that the scheme yields an optimal rate of convergence for both regimes. 
In particular, the convection quasi-robustness is reflected by the fact that (1) the constants in the estimate are independent of $\nu$, (2) the convergence rate gains an additional $h^{1/2}$ factor in convection dominated cases, (3) the convergence is in a norm that contains terms controlling explicitly the convection. A list of remarks are in order.

\begin{rmk}[Pressure robustness] 
\label{rem:press-rob}
As already observed, the proposed scheme is pressure robust in the sense of \cite{john-linke-merdon-neilan-rebholz:2017} (a modification of the continuous problem that only affects the pressure leads to changes in the discrete solution that only affect the discrete pressure). 
The estimate in Proposition \ref{prp:error-eea} reflects such property of the scheme, since the velocity error estimates are independent of the continuous pressure $p$. Analogously, note that in all our analysis (such as Lemma \ref{lm:2} and the definition of $\cfun{DATA}(n)$ in Proposition \ref{prp:stability}) the loading term $\ff$ could be substituted by its Helmholtz-Hodge projection $\mathcal{H}(\ff)$, defined in Section \ref{sec:cont}.
\end{rmk}

\begin{rmk}[Avoiding quasi-uniformity assumption] 
\label{rem:no-quasi}
The only reason for the quasi-uniformity assumption {\bf (M2)} is term \eqref{eq:err-B-6-2}. Indeed, such term vanishes only because we have chosen the approximation operator $\Pi_h$ in the definition of ${\mathcal P}$; in turn, using a global $L^2$ projector such as $\Pi_h$ yields a quasi-uniformity condition, see Lemma \ref{lm:proj-bis}. An alternative approach would be to use a local approximant, such as that appearing in \eqref{eq:int est 2}, instead of $\Pi_h$. This choice would allow to avoid assumption {\bf (M2)}, but the price to pay would be the following. 
We would be forced to bound the term \eqref{eq:err-B-6-2} as follows
$$
\sum_{j=1}^n (\jump{\eea}_{t_j}, \eei^j)  \leq \|\eea\|_{\nstabn}  \biggl( \sum_{j=1}^n \| \eei^j \|_{\Omega}^2 \biggr)^{1/2}
\lesssim \Lambda \, n^{1/2} \, h^{s+1} \|\eea\|_{\nstabn} \, .
$$
Therefore, since $N \sim \tau^{-1}$, such term would lead to an additional addendum of order $O(\tau^{-1} h^{2s+2})$ in the estimates. 
Finally, note that one could also make use of $\Pi_h$ but resort to a  local quasi-uniformity condition, in the spirit of \cite{CT:87} (but such choice would yield an even more technical analysis). 
\end{rmk}

\begin{rmk}[Error estimates for the pressure]
Since estimating the pressure error is not the focus or novelty of this contribution, we limit ourselves in reporting briefly a simple approach.
Starting from the inf-sup stability of the discrete couple $(\VDG,\QDG)$ stated in {\bf (P1)}, by standard techniques we can easily derive a discrete space-time inf-sup condition involving the $L^2(0,T, L^2(\Omega))$ norm for the pressure and the $L^2(0,T, H^1(\Omega))$ for the velocity.  
One then combines such inf-sup condition (applied to the difference among the discrete pressure $P$ and a suitable projection in $\QDGt$ of the continuous pressure $p$) with the discrete and continuous error equations, and finally makes use of the developed bounds for the velocity error \eqref{eq:eea}. 
After a triangle inequality, these steps lead to an estimate with many terms (depending on $h,\tau,\delta,\nu,\sigma$); in order to simplify the bound we here assume the reasonable situation in which $\nu \lesssim 1$ and $\tau \lesssim h$. 
After manipulations, we finally obtain the following estimate
$$
\| p - P \|_{L^2(0,T;L^2(\Omega))} \lesssim 
\Big(h^{-1/2} + \min{ \{\nu^{1/2} + h, \nu^{-1/2} h \}\tau^{-1} } \Big) \, \sigma^{1/2} \, ,
$$
where $\sigma$ was defined in \eqref{eq:sigmaE} (see also \eqref{eq:eea} to compare with the velocity error estimates). 
Note that the $\tau^{-1}$ terms are related to the approximation of the time-derivatives of the velocity error, and already present in previous literature \cite{Ahmed-Stokes}. In order to eliminate such terms, one should either first (try to) derive direct error estimates for the time derivative of the velocities, or resort to weaker time norms in the spirit of \cite{Hm1}.
\end{rmk}

\section{Numerical tests}
\label{sec:num}

In this section we present three numerical experiments to assess the practical performances of the proposed stabilized scheme \eqref{eq:ns stab}. 
In the first test we exhibit that the method provides the expected optimal convergence order both in the convection and in the diffusion dominated regime (cf. \eqref{eq:error-conv} and \eqref{eq:error-diff} respectively). 
In the second test we show that the stabilization strategy here proposed does not spoil the benefits related to the divergence-free property of the discrete solution. 
In the third test we validate the proposed method on a standard benchmark problem (the lattice vortex problem).

In the tests we consider a classical family of discrete spaces \eqref{eq:spaces} verifying properties \textbf{(P1)}--\textbf{(P6)}  introduced in Section \ref{sub:fem} that is  the second order Scott-Vogelius element 
\[
\VDG := [\Pk_2(\Omega_h)]^2 \cap \VCG \,,
\qquad 
\QDG := \Pk_1(\Omega_h) \cap \QCG\,,
\] 
on barycenter-refined meshes \cite{john-linke-merdon-neilan-rebholz:2017} (see Fig. \ref{fig:mesh-table} (left) for a sample of barycentric refined triangulations).
The associated $H^2$-conforming space $\PDG$ is the Hsieh-Clough-Tocher element \cite{HCT}.



\begin{figure}
     \begin{subfigure}[t]{0.5\textwidth}
     \hspace{-0cm}
         \includegraphics[width=0.8\textwidth]{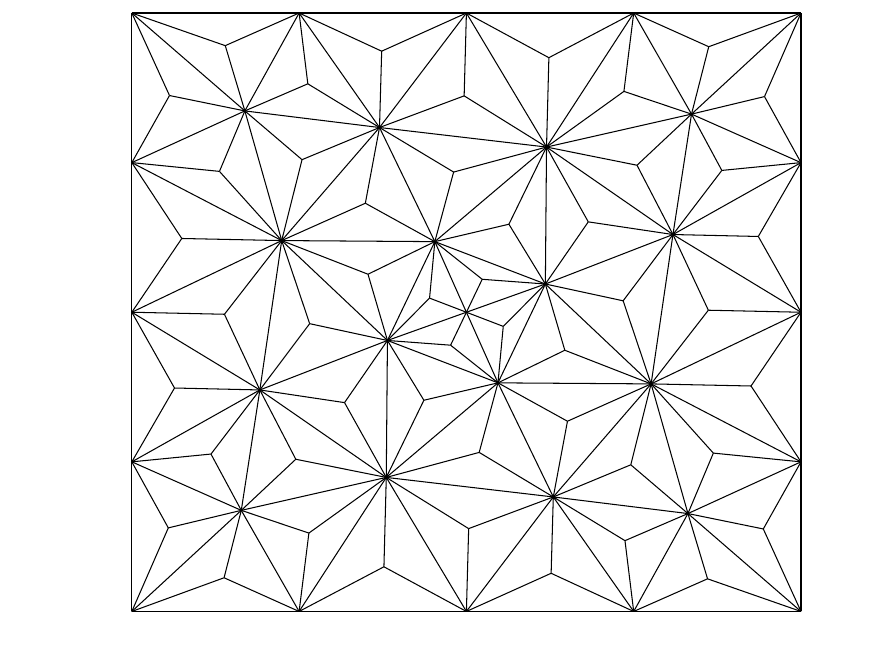}
     \label{fig:mesh}
     \end{subfigure}
     \hfill
     \begin{subfigure}[t]{0.5\textwidth}
     \vspace{-3.0cm}
	\hspace{-0cm}
         \begin{tabular}{|c|c|c|}
\cline{2-3}
\multicolumn{1}{c|}{}&$h$ &$\tau$
\\
\hline
\texttt{mesh 1}    &\texttt{3.809e-01} & \texttt{0.500e-00} \\
\texttt{mesh 2}    &\texttt{2.204e-01} & \texttt{0.125e-00} \\
\texttt{mesh 3}    &\texttt{1.055e-01} & \texttt{3.125e-01} \\
\texttt{mesh 4}    &\texttt{5.187e-02} & \texttt{7.812e-03} \\
\hline
\end{tabular}
         \label{fig:table}
     \end{subfigure}
\caption{Barycenter-refined \texttt{mesh 2} (left).  Mesh size $h$ and associated time step size $\tau$ for all refinement levels (right).}
\label{fig:mesh-table}
\end{figure}

 For the tests in Subsection \ref{sub:numConv} and
Subsection \ref{sub:numDivFree} we consider the following setting.
The domain is $\Omega :=[0,\,1]^2$ and the final time is $T:=1$.
The domain $\Omega$ is partitioned with a sequence of barycenter-refined meshes  based on a
sequence of conforming Delaunay triangulations.
We consider four levels of refinements: \texttt{mesh 1},
\texttt{mesh 2} (depicted in Fig. \ref{fig:mesh-table} (left)), \texttt{mesh 3} and \texttt{mesh 4}.
In accordance with \eqref{eq:tau-h}, in order to recover the optimal rate of convergence both in the convective and in the diffusion dominated regime, the best choice for the time step size is $\tau \approx \gamma h^2$ where $\gamma$ is a positive uniform constant. 
In Fig.~\ref{fig:mesh-table} (right) we collect both $h$ and $\tau$ for each level of refinement. 
Notice that the mesh size $h$ (approximatively) halves at each refinement step, whereas the time step $\tau$ is divided by four.

For the three tests the stabilization parameters $\{\delta_E\}_{E \in \Omega_h}$ are chosen in accordance with \eqref{eq:delta} where we set $c_E = \texttt{1e-2}$ for all $E\in \Omega_h$ and  $\widehat{c}=\texttt{1e-2}$ (cf. \eqref{eq:jstab}). 
Concerning the choice of the stabilization parameter, we address  the influence of $c_E$ and $\widehat{c}$ on the proposed method at the end of Section \ref{sub:numConv}.
As usual, problem \eqref{eq:ns stab} is calculated time-slab by time-slab (first the discrete solution in $I_0$ is found, then $I_1, I_2,...$). For each time slab $I_n$, $n=0,1,..N$, the nonlinear problem \eqref{eq:ns stab} is solved using a fixed point strategy with initial velocity taken as the velocity solution at the end of the previous time-slab  prolonged constant in time in the interval $I_n$, i.e. $\uht^{\rm start}_{|I_n} \equiv \uht^n_- $; we recall that the value $\uht^0_- = \uht_0$ is taken as a suitable interpolation of the initial data $\uu_0$.



In order to evaluate the error between the exact velocity solution $\uu_{\rm  ex}$ and the discrete velocity solution $\uht$,
we consider the $H^1$-seminorm and $L^2$-norm error at final time $T$:
\[
\text{err}(\uht, L^2) := \Vert \uu_{\rm  ex}(\cdot, T) - \uht(\cdot, T) \Vert_{L^2(\Omega)} \,,
\qquad
\text{err}(\uht, H^1) := \Vert \Gr\uu_{\rm  ex}(\cdot, T) - \Gr\uht(\cdot, T) \Vert_{L^2(\Omega)} \,.
\]
Analogously for the pressure variable we consider the error quantity
\[
\text{err}(\pht, L^2) := \Vert p_{\rm ex}(\cdot, T) - \pht(\cdot, T) \Vert_{L^2(\Omega)} \,.
\]
Notice that the convergence results in Section \ref{sub:err} yield error bounds in the space-time norm $\| \cdot \|_{\nstabn}$ and therefore do not provide explicit bounds for all the aforementioned error quantities; we here below summarize the theoretical expectations on the above errors. 

Most importantly, recalling definition \eqref{eq:norma} and bounds \eqref{eq:error-conv} and \eqref{eq:error-diff}, we have
\begin{equation}
\label{eq:bound-sv2}
\begin{aligned}
& \bullet \,\, \texttt{convection dominated regime} 
 &\qquad 
&\text{err}(\uht, L^2) \lesssim C(\uu_{\rm ex}) \, \, h^{2.5}\,,
\\
& \bullet \,\, \texttt{diffusion dominated regime}    &\qquad
&\text{err}(\uht, L^2) \lesssim C(\uu_{\rm ex}) \, \, \sqrt{\nu} h^{2}\,.
\end{aligned}
\end{equation}
In bounds \eqref{eq:bound-sv1} and \eqref{eq:bound-sv2} the hidden constants depend only on the mesh regularity constants $c_{{\rm M}1}$, $c_{{\rm M}2}$ in Assumptions \textbf{(M1)}, \textbf{(M2)}. The quantity $C(\uu_{\rm ex})$ is a generic constant depending on the norms of $\uu_{\rm ex}$ in the spaces listed in Assumptions  \textbf{(A2)} with $k=2$. Due to bounds \eqref{eq:bound-sv2}, the error quantity $\text{err}(\uht, L^2)$ will become our main reference in the following tests.

The other two norms above are too strong to be controlled by the $\| \cdot \|^2_{\nstabn}$ norm used in the theoretical convergence estimates,  but we include them in our numerical investigation since it is informative to investigate its behavior. What we can surely assert is that the best rate of convergence that one can expect in the above norms corresponds to the one given by the interpolation estimates.
For the adopted Scott-Vogelius element, according with \eqref{eq:int est 2-a} and \eqref{eq:int est 3}, such \emph{optimistic} standpoint would lead to the bounds
\begin{equation}
\label{eq:bound-sv1}
\text{err}(\uht, L^2) \lesssim h^3 |\uu_{\rm ex}|_{3, \Omega_h}\,,
\quad
\text{err}(\uht, H^1) \lesssim h^2 |\uu_{\rm ex}|_{3, \Omega_h}\,,
\quad
\text{err}(\pht, L^2) \lesssim h^2 |p_{\rm ex}|_{2, \Omega_h}\, ,
\end{equation}
where we are clearly assuming that the solution is sufficiently regular.

In the forthcoming tests we compare the performances of the proposed stabilized scheme \eqref{eq:ns stab} (labelled as \texttt{stab}) with those of the non-stabilized scheme \eqref{eq:ns fem} (labelled as \texttt{no\,stab}).

\subsection{Convergence analysis}
\label{sub:numConv}

The aim of the present test is to assess the convergence of the stabilized scheme \eqref{eq:ns stab} both in the convection and diffusion dominated regime.

We consider a family of unsteady Navier–Stokes equations \eqref{eq:ns primale}, one per each choice of the viscosity $\nu$, where the load $\ff$ (which turns out to depend on $\nu$), the initial datum $\uu_0$ and the Dirichlet boundary conditions are chosen in accordance with the analytical solution
\[
\begin{aligned}
\uu_{\rm  ex}(x, \,y, \, t) &= 0.5 \begin{bmatrix}
                      -\cos^2(\pi(x-0.5)) \, \cos(\pi(y-0.5))\, \sin(\pi(y-0.5))\\
           \phantom{-}\cos^2(\pi(y-0.5)) \, \cos(\pi(x-0.5)) \, \sin(\pi(x-0.5))
            \end{bmatrix} \cos(t)\,,
\\
p_{\rm  ex}(x, \,y, \, t) &=(\sin(\pi(x-0.5)) - \sin(\pi(y-0.5)))\cos(t)\,.
\end{aligned}
\]
Since we are interested in a robustness analysis with respect to the viscosity, we set $\nu= \texttt{1}$ (\texttt{diffusion dominated regime}) and $\nu= \texttt{1e-11}$ (\texttt{convection dominated regime}).

In Fig.~\ref{fig:convStand} we consider the \texttt{diffusion dominated} case  $\nu= \texttt{1}$ and we display the errors $\text{err}(\uht, H^1)$, $\text{err}(\uht, L^2)$ and $\text{err}(\pht, L^2)$ obtained with the \texttt{stab} and the \texttt{no\,stab} method with the sequence of meshes and time steps of Fig.~\ref{fig:mesh-table} (right).
We notice that both methods provide the optimal rate of convergence 
(see the optimistic bounds \eqref{eq:bound-sv1}) that is, as could be expected for diffusion dominated problems,
the rate of convergence is dictated by the interpolation errors.
It is important to note that the stabilization strategy here proposed does not introduce any suboptimal term in the \texttt{diffusion dominated regime}; 
the convergence lines for the two methods are really close to each other. 
%

\begin{figure}[!htb]
\centering
\begin{tabular}{cc}
\includegraphics[width=0.46\textwidth]{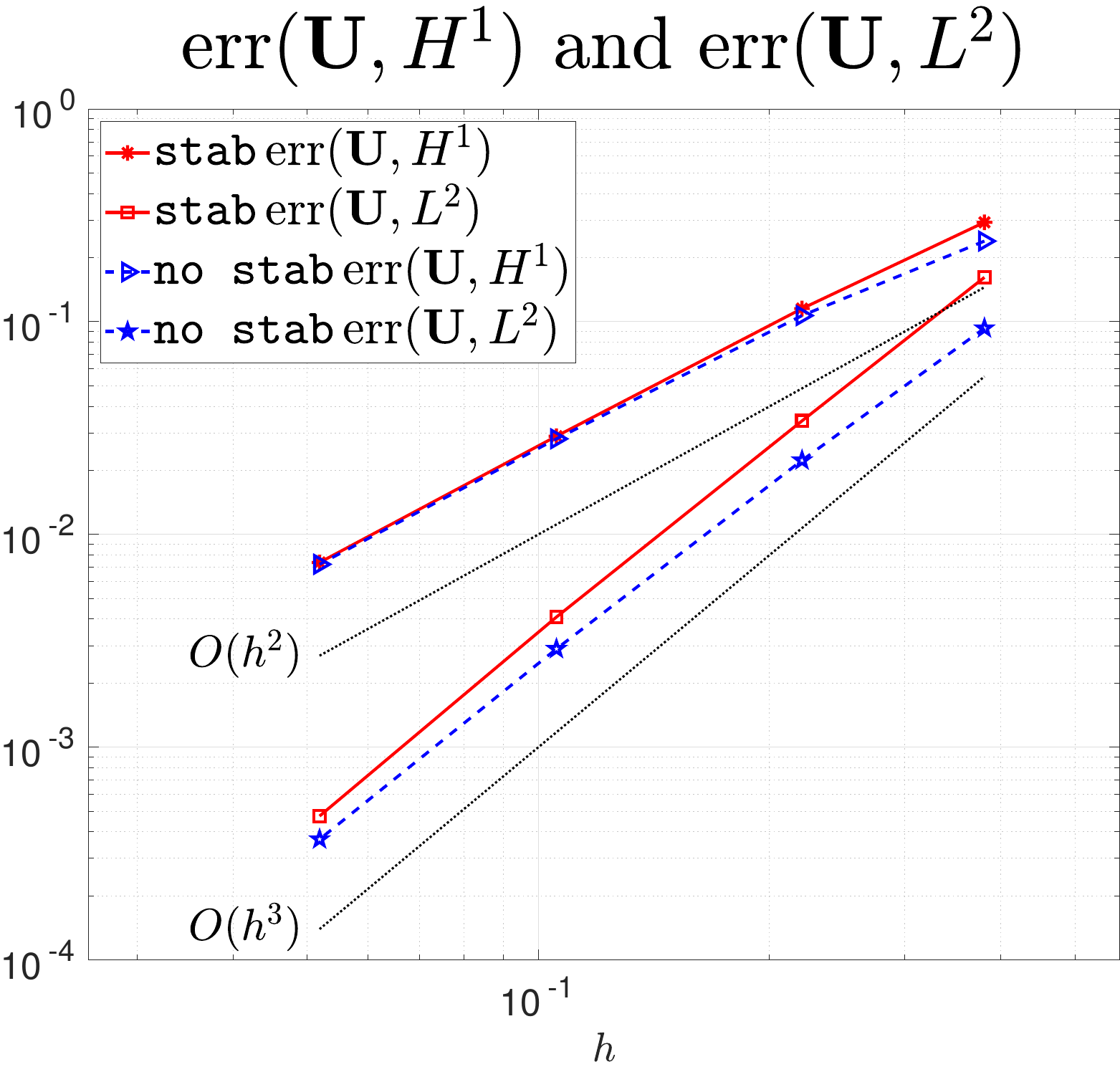}&
\includegraphics[width=0.46\textwidth]{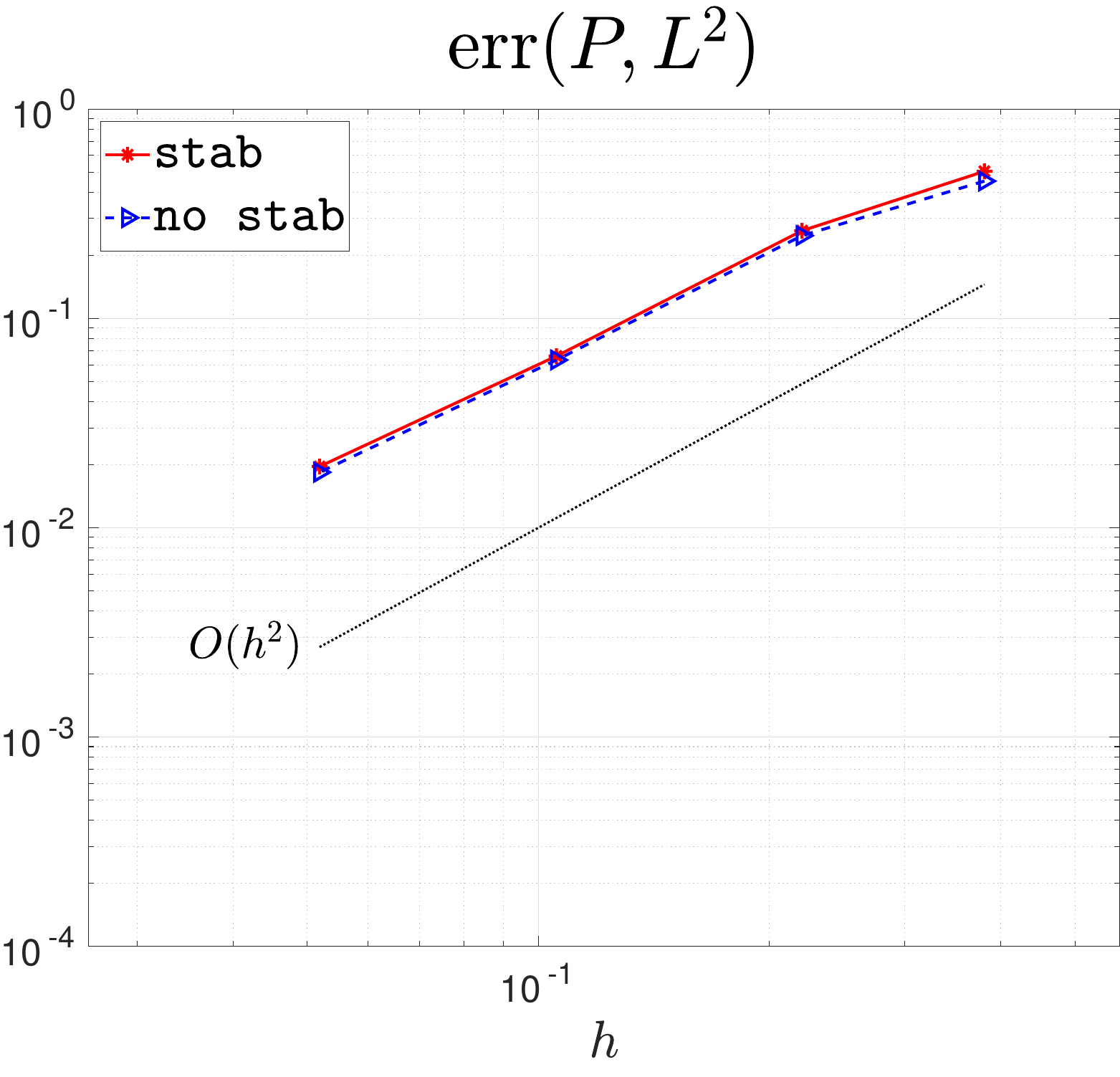}
\end{tabular}
\caption{Convergence analysis, $\nu = \texttt{1}$ (\texttt{diffusion dominated regime}). Convergence histories of both $\text{err}(\uht, H^1)$ and $\text{err}(\uht, L^2)$, left, and $\text{err}(\pht, L^2)$ right.}
\label{fig:convStand}
\end{figure}


We now consider the \texttt{convection dominated regime} $\nu = \texttt{1e-11}$.
In Fig.~\ref{fig:conv11} we show the errors $\text{err}(\uht, L^2)$ and $\text{err}(\pht, L^2)$ comparing the \texttt{stab} and the \texttt{no\,stab} strategy.
In this setting the benefits provided by the \texttt{stab} method are quite evident (at least for the velocity error).
For the \texttt{stab} method we recover for $\text{err}(\uht, L^2)$ the theoretical bound \eqref{eq:bound-sv2}. 
We stress that in \texttt{convection dominated} cases, the rate of convergence $h^{2.5}$ is known to be the best that can be expected.
 Whereas the \texttt{no\,stab} method, in accordance with the analysis in \cite[Section 4.2.1]{garcia} provides a slower convergence rate, that is approximately $h^{2}$.
Concerning the pressure error $\text{err}(\pht, L^2)$ both methods exhibit the optimal (w.r.t. the interpolation error) rate of convergence $h^2$.

\begin{figure}[!htb]
\centering
\begin{tabular}{cc}
\includegraphics[width=0.46\textwidth]{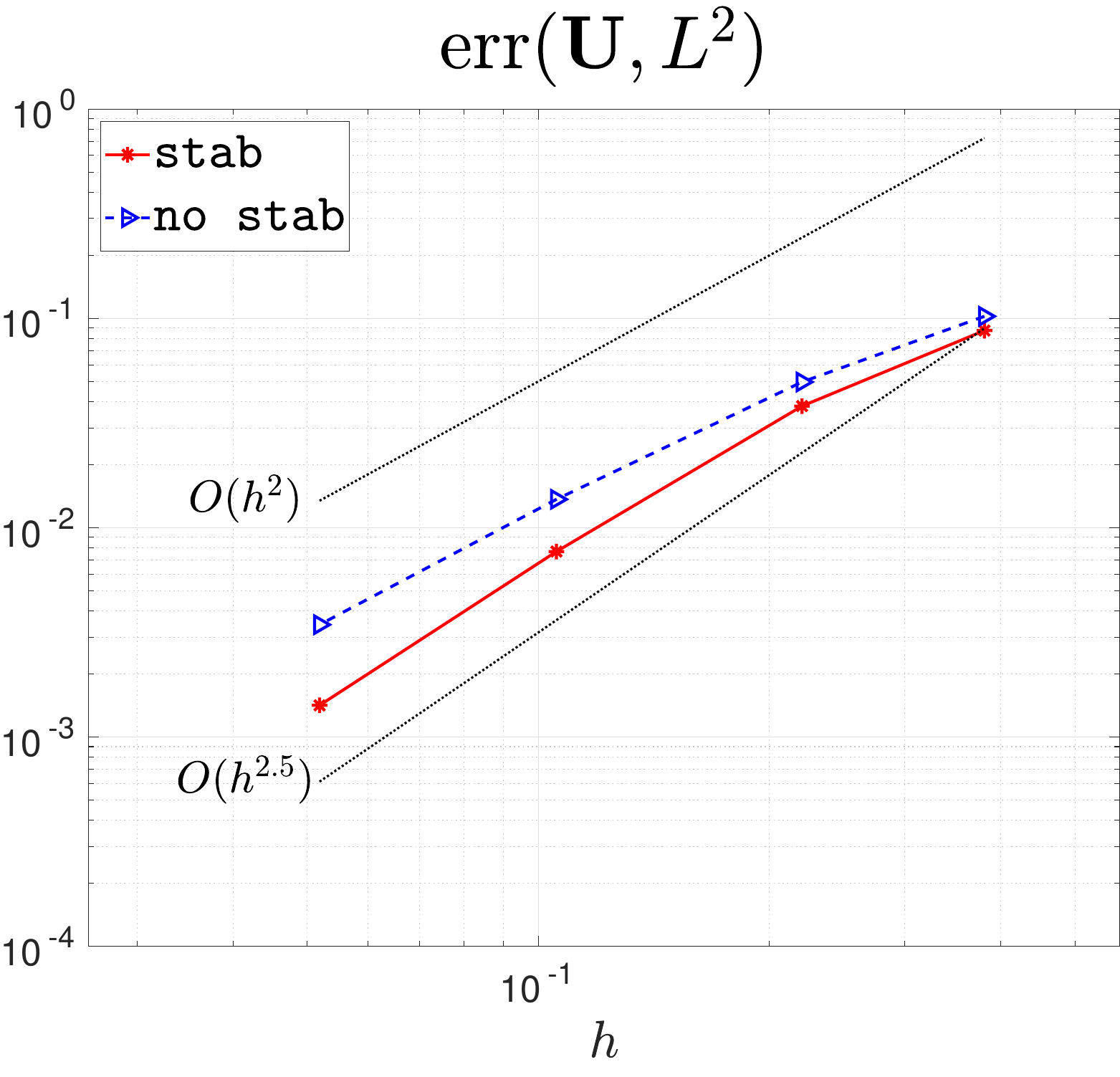}&
\includegraphics[width=0.46\textwidth]{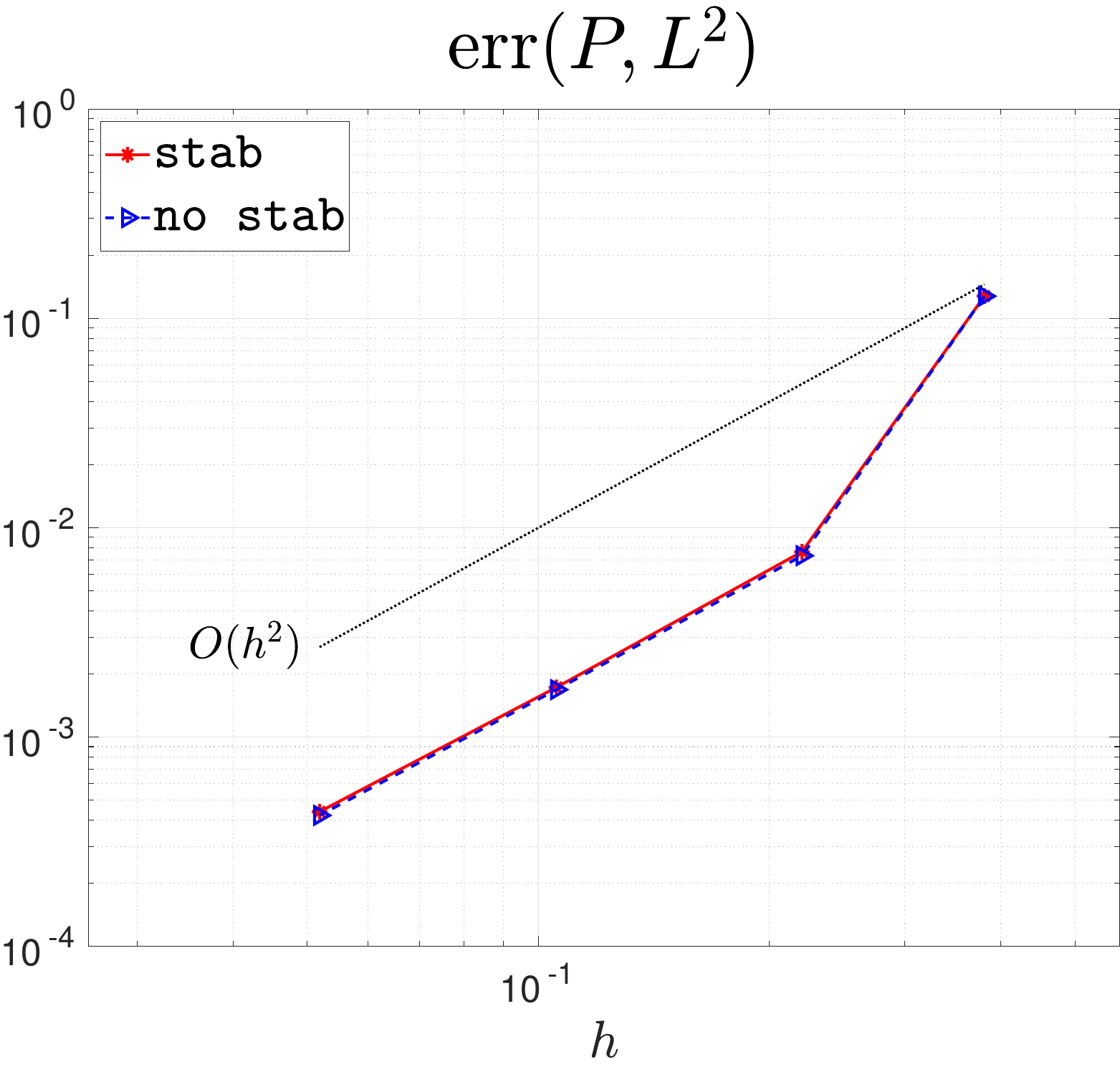}
\end{tabular}
\caption{Convergence analysis, $\nu = \texttt{1e-11}$ (\texttt{diffusion dominated regime}). Convergence histories of $\text{err}(\uht, L^2)$ and $\text{err}(\pht, L^2)$.}
\label{fig:conv11}
\end{figure}

Finally, we highlight the robustness of the proposed method with respect to dominant convection (i.e. small values of $\nu$).
More specifically in Fig.~\ref{fig:convNu} we display
the errors $\text{err}(\uht, L^2)$ and $\text{err}(\pht, L^2)$ for a sequence of decreasing values of $\nu$ (from \texttt{1} to \texttt{1e-11}) on the \texttt{mesh 3} with the associated time step $\tau$ (cf. Fig. \ref{fig:mesh-table} (right)).

\begin{figure}[!htb]
\centering
\begin{tabular}{cc}
\includegraphics[width=0.46\textwidth]{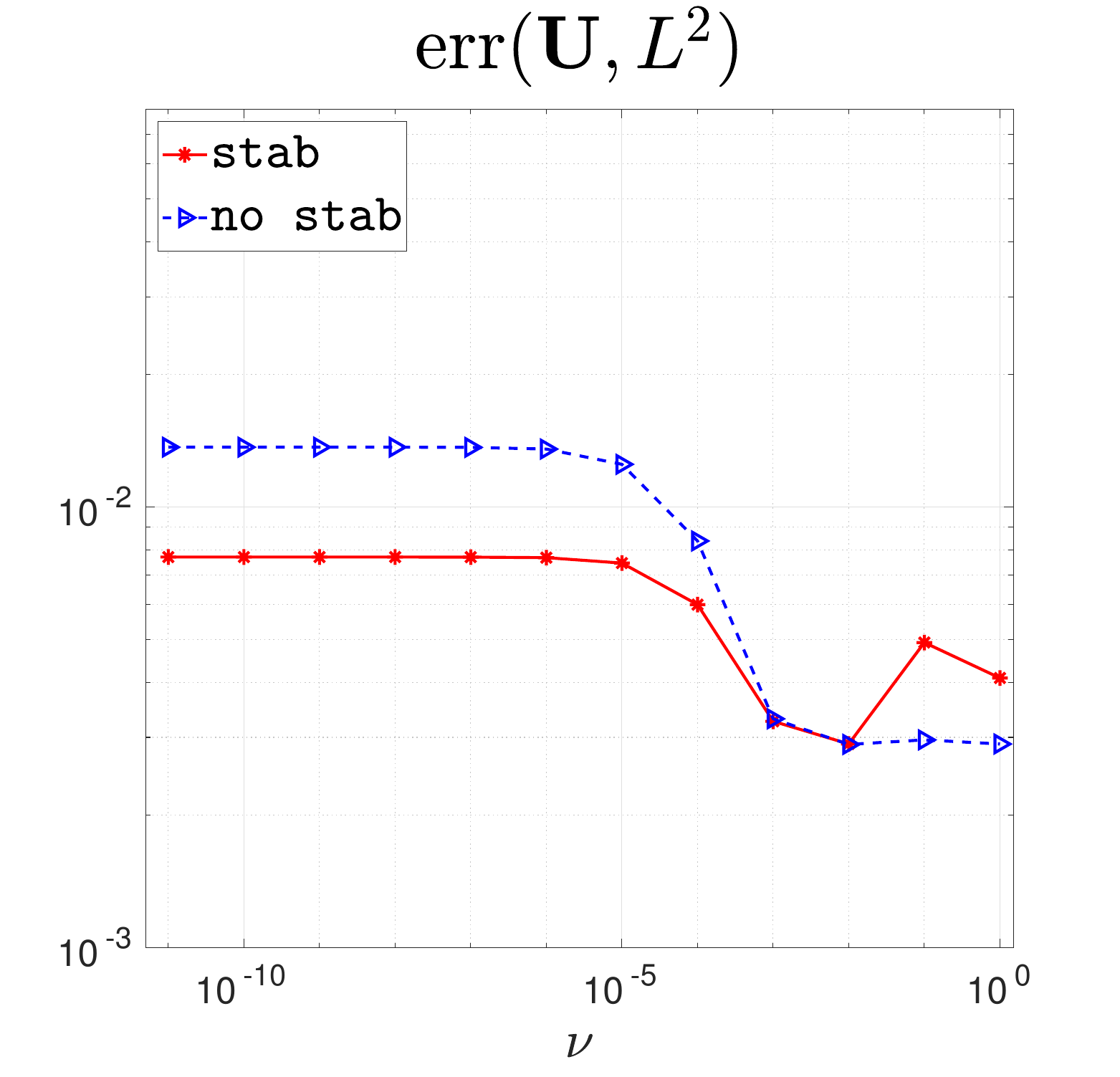}&
\includegraphics[width=0.46\textwidth]{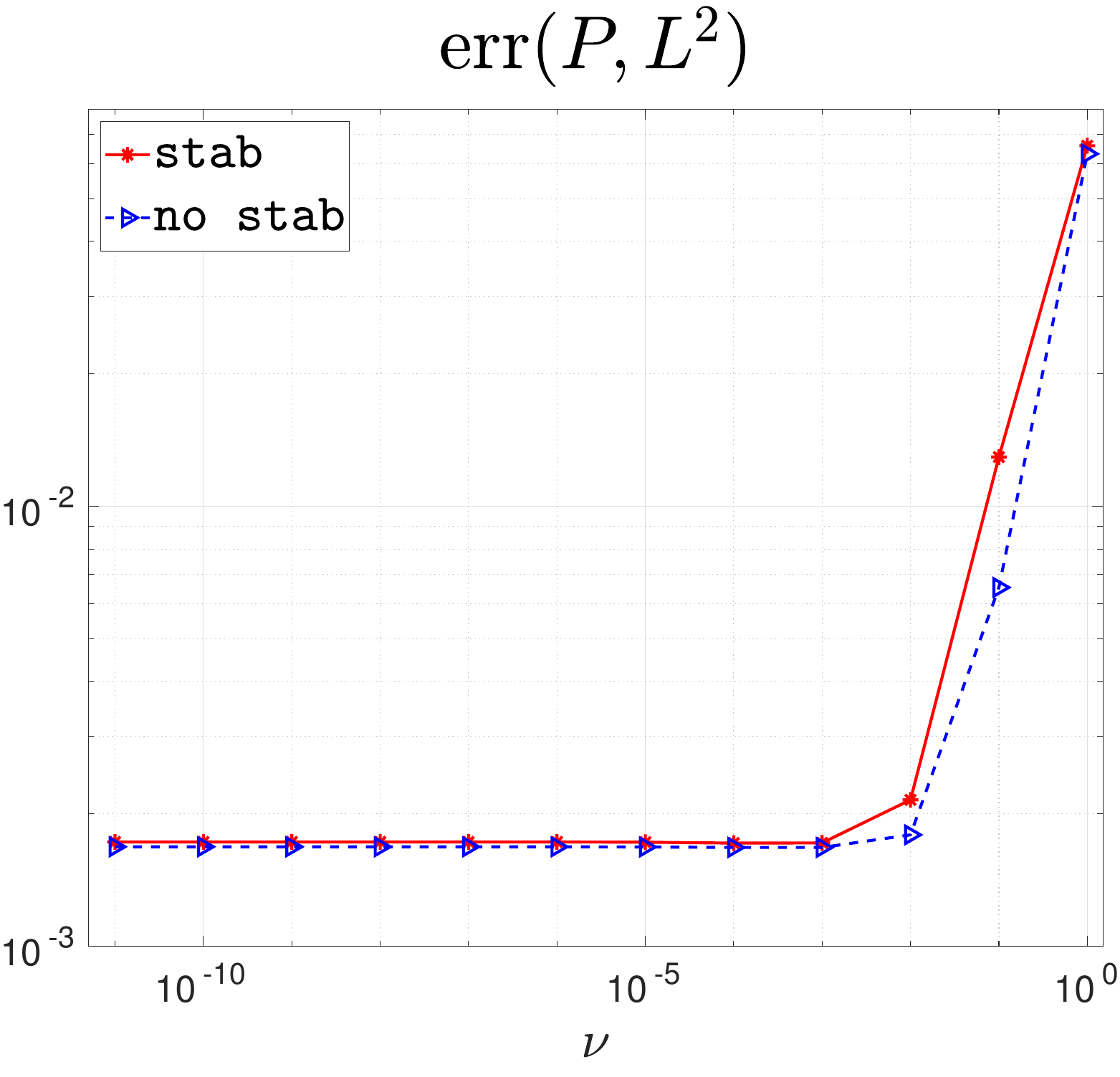}
\end{tabular}
\caption{Convergence analysis: errors $\text{err}(\uht, L^2)$ and $\text{err}(\pht, L^2)$  as a function of  the viscosity $\nu$ on  \texttt{mesh 3}.}
\label{fig:convNu}
\end{figure}

Concerning the error  $\text{err}(\uht, L^2)$, we notice that up to the threshold value $\nu = \texttt{1e-03}$ the problem appears  \texttt{diffusion dominated} and the \texttt{no\,stab} scheme provides better results. Whereas for smaller values of the viscosity
the \texttt{stab} scheme  is more accurate. 
However for $\nu \leq \texttt{1e-5}$ both methods exhibit a plateau of the errors.
For the \texttt{stab} scheme this is in accordance with \eqref{eq:bound-sv2} (\texttt{convection dominated regime}), whereas for the \texttt{non stab} scheme this behaviour can be justified with the presence of the $L^2$-form arising from the time-derivative that introduces a stabilizing reaction-type term in the problem.
We finally note that the errors $\text{err}(\pht, L^2)$ are almost identical for both methods.

We close this section with a sensitivity analysis on the parameter $c_E$ and $\widehat{c}$ (cf. \eqref{eq:delta} and \eqref{eq:jstab}), outlined in Fig.~\ref{fig:CAnalysis}.
We consider $c_E=\widehat{c}$ for all $E\in \Omega_h$ and we vary $c_E$ in the range $[\texttt{1e-4}, \texttt{1}]$, plotting the error for the second mesh of our family (similar results where obtained for the other meshes). 
The plot indicates that the proposed method, particularly in the  \texttt{convection dominated regime}, is fairly robust with respect to the stabilization parameters.
We finally notice that the adopted choice $c_E = \texttt{1e-2}$ appears to accommodate in a satisfactory way to both regimes.

\begin{figure}[!htb]
\centering
\begin{tabular}{cc}
\includegraphics[width=0.46\textwidth]{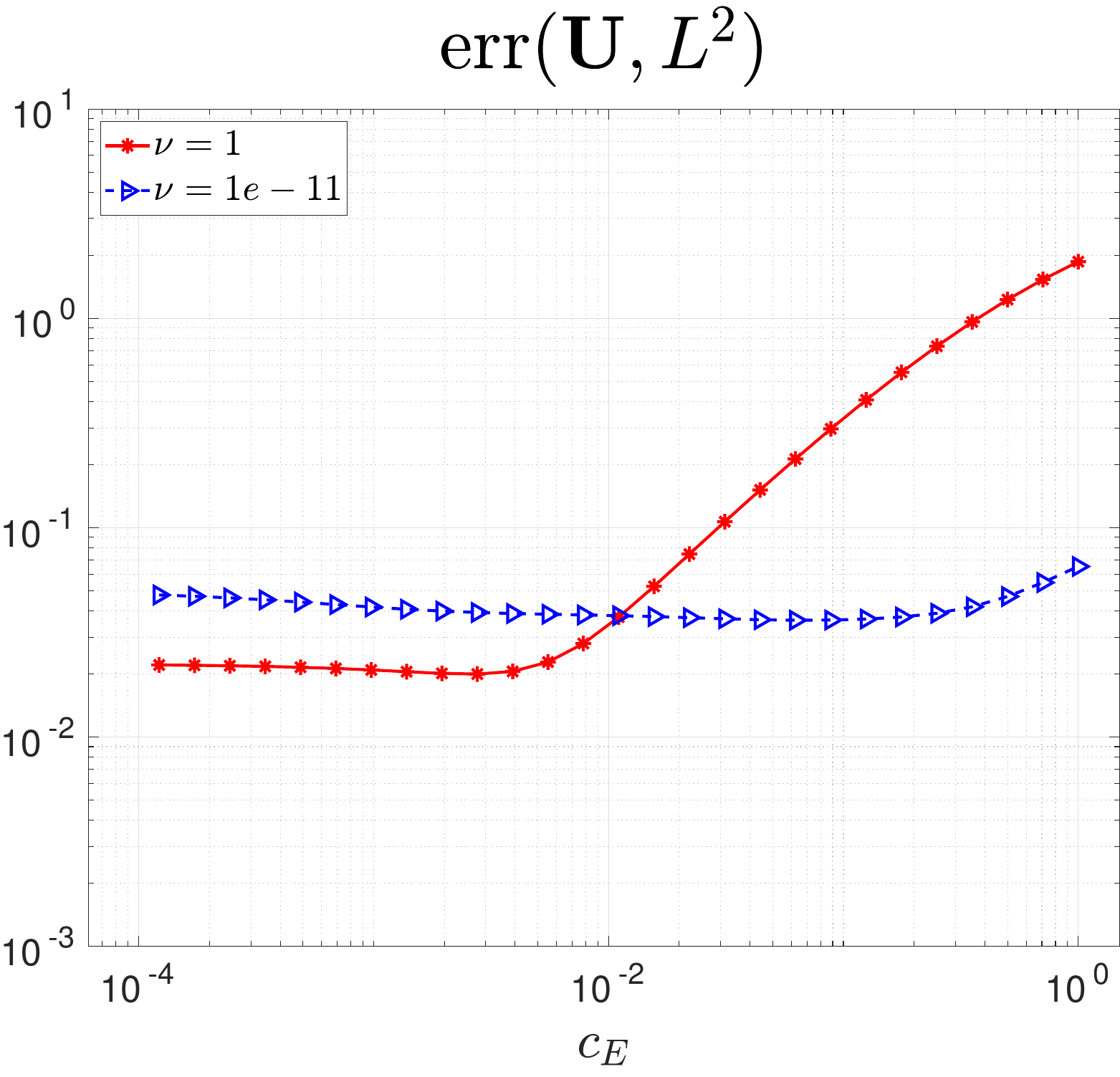}&
\includegraphics[width=0.46\textwidth]{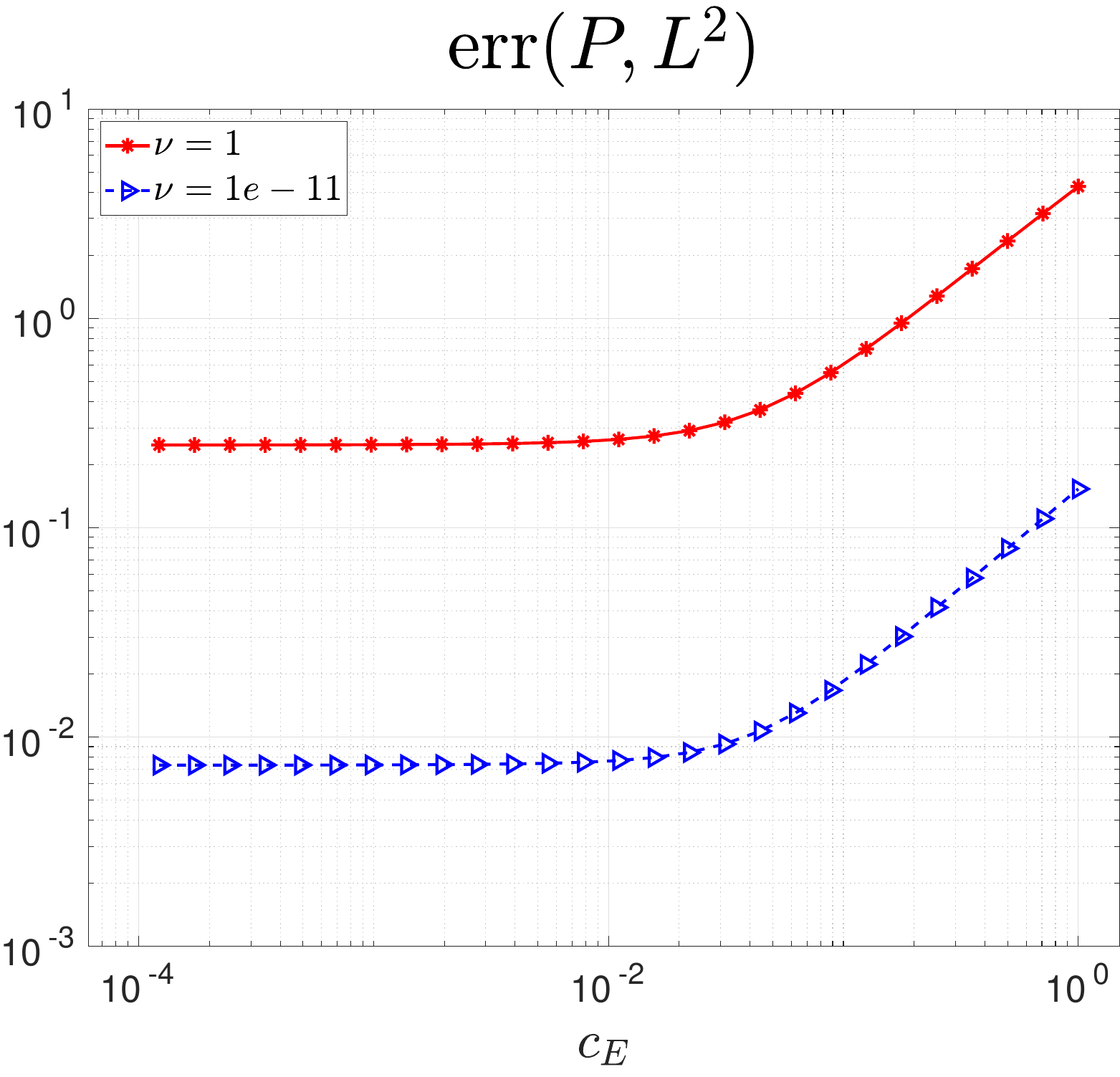}
\end{tabular}
\caption{Convergence analysis: sensitivity analysis of the parameter $c_E$ on $\text{err}(\uht, L^2)$ and $\text{err}(\pht, L^2)$ for $\nu = \texttt{1e-11}$ and $\nu = \texttt{1}$ considering \texttt{mesh 2}.}
\label{fig:CAnalysis}
\end{figure}

\subsection{Pressure robustness analysis}
\label{sub:numDivFree}

In the present test we verify that differently from most stabilizations, the proposed stabilized method does not spoil the benefits related to the divergence-free property.
To assess this feature, we consider a family of Navier-Stokes equations \eqref{eq:ns primale}, one per each choice of the viscosity $\nu$, where the load $\ff$ (which turns out to depend on $\nu$), the initial datum $\uu_0$ and the Dirichlet boundary conditions are chosen in accordance with the analytical solution
\[
\begin{aligned}
\uu_{\rm ex}(x,\,y, \, t) &=\begin{bmatrix}
            y^2\\
            x^2          
            \end{bmatrix} (t+1) \,,
\\
p_{\rm ex}(x,\,y, \, t) & =  \bigg(x^2y+y^3-\frac{5}{12}\bigg)\bigg(t^3-t+1\bigg)\,.
\end{aligned}
\]
Notice that, in the setting of the Scott-Vogelius element, the velocity solution $\uu_{\rm ex} \in \VDGt$ (whereas 
$p_{\rm ex} \not\in \QDGt$).
Therefore, in accordance with \eqref{eq:bound-sv2}, 
the stabilized method produces for this example
the exact velocity solution (up to machine precision)  at all regimes.
In Table~\ref{tab:errExe2} we exhibit
the errors $\text{err}(\uht, L^2)$ for a sequence of decreasing values of $\nu$ (from \texttt{1} to \texttt{1e-11}) on the \texttt{mesh 2} with the associated time step $\tau$ (cf. Fig. \ref{fig:mesh-table} (right)).

\begin{table}[!htb]
\centering
\begin{tabular}{|c|c|c|c|c|c|c|}
\hline
\multicolumn{1}{|c|}{$\nu$}  &
\texttt{1e-00} &\texttt{1e-03} &\texttt{1e-05} &\texttt{1e-07} &
\texttt{1e-09} &\texttt{1e-11} \\
\hline
\texttt{no\,stab} &
\texttt{4.28e-15} &\texttt{1.72e-13} &\texttt{1.24e-13} &
\texttt{1.27e-13} &\texttt{1.28e-13} &\texttt{1.27e-13} \\
\texttt{stab}    &
\texttt{3.21e-15} &\texttt{1.02e-14} &\texttt{1.26e-14} &\texttt{1.23e-14} &\texttt{1.14e-14} &\texttt{1.08e-14} \\
\hline
\end{tabular}
\caption{Pressure robustness analysis: $\text{err}(\uht, L^2)$ as a function of the viscosity $\nu$ on \texttt{mesh 2}.}
\label{tab:errExe2}
\end{table}
The numerical test confirms the theoretical predictions since the velocity error attains machine precision and is not polluted by the pressure: the proposed stabilization does not spoil the pressure robustness of the original scheme.
\subsection{Dynamics of planar lattice flow}
\label{sub:applicative}
In the present test we consider the lattice vortex problem from \cite[Section 4.1]{Olshanskii}. 
We solve the Navier-Stokes equation~\eqref{eq:ns primale} 
on the square domain $\Omega = [0, 1]^2$ and final time $T:=10$ sec.,
where the load $\ff$ (which turns out to be $\boldsymbol{0}$), the initial datum $\uu_0$ and the Dirichlet boundary conditions are chosen in accordance with the exact solution
\[
\begin{aligned}
\uu_{\rm ex}(x,\,y, \, t) &=\begin{bmatrix}
            \sin(2\pi\,x)\sin(2\pi\,y)\\
            \cos(2\pi\,x)\cos(2\pi\,y)
            \end{bmatrix} e^{-8\nu\pi^2t}\,,
\\
p_{\rm ex}(x,\,y, \, t) & =  \frac{1}{2}\bigg(\sin^2(2\pi x)+\cos^2(2\pi y)\bigg)e^{-16\nu\pi^2t}\,.
\end{aligned}
\]
The viscosity is $\nu=\texttt{1e-05}$.
Notice that, due to the small viscosity, the solution is essentially constant in time. 
The velocity $\uu_{\rm ex}$ is characterized by several spinning vortices whose edges touch and the velocity magnitude $\vert \uu_{\rm ex} \vert$ is slightly decreasing in time and is less or equal than $1$. 

The proposed test has two important features: i) the analytical solution is available, this makes immediate the computations of the errors; ii) the vortices of the solution may lead to instabilities in the numerical simulations. 

The test is carried out employing the second order Scott-Vogelius element on the unstructured barycenter-refined mesh with diameter $h=\texttt{8.2e-02}$
depicted in Fig.~\ref{fig:meshAndData} (left).
Note that the adopted mesh is not aligned with the separatrices
of the flow vortices.
The time step is $\tau=\texttt{1.0e-2}$. 
In the setting here considered, the mesh size and the time step are much larger than the choices adopted in \cite[Section 4.1]{Olshanskii} (that are $h=\texttt{1/64}$ and $\tau=\texttt{1.0e-3}$).
\begin{figure}[!htb]
\begin{subfigure}[t]{0.5\textwidth}
\hspace{-0cm}
\includegraphics[width=0.9\textwidth]{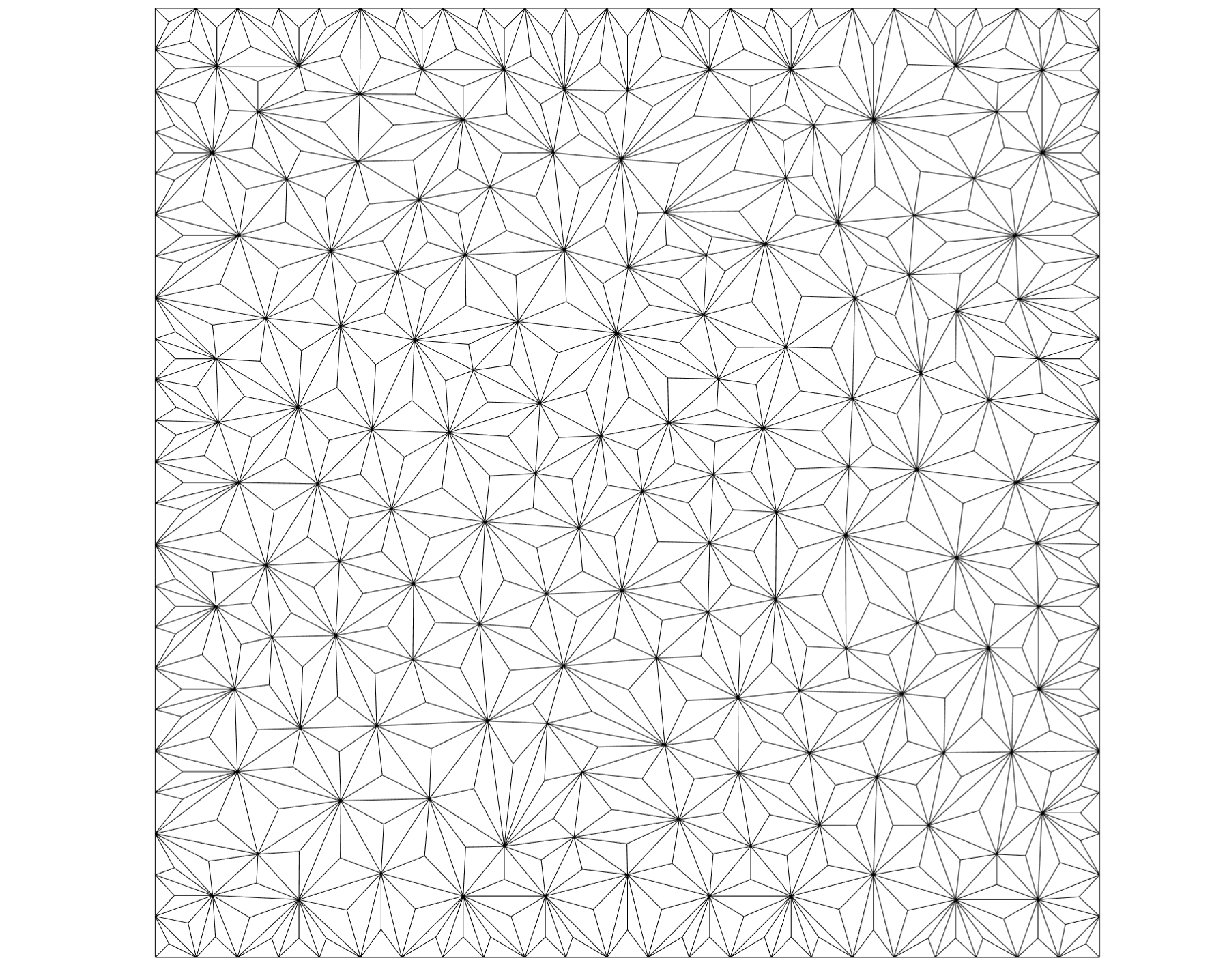}
\end{subfigure}
\hfill
\begin{subfigure}[t]{0.5\textwidth}
\vspace{-3.5cm}
\hspace{-0cm}
\begin{tabular}{|c|c|c|}
\cline{2-3}
\multicolumn{1}{c|}{}&\texttt{no\,stab} &\texttt{stab} \\
\hline
size      &\num{15746}      &\num{15746}\\
\#nnz     &\num{656056}     &\num{861720} \\
sparsity   &0.26\%     &0.35\% \\
\hline
\end{tabular}
\end{subfigure}
\caption{Lattice vortex problem: mesh adopted (left) and some data on the linear system matrix (right).}
\label{fig:meshAndData}
\end{figure}

We assess the practical performance of the proposed stabilized scheme \eqref{eq:ns stab} compared with that of the non-stabilized scheme \eqref{eq:ns fem} for this convection dominated problem.
In Fig.~\ref{fig:VelFiled} we exhibit the snapshots of the solutions $\uht^{\texttt{stab}}$ and $\uht^{\texttt{no\,stab}}$ computed with the \texttt{stab} and the \texttt{no\,stab} schemes respectively at times $t=2.5$, $5$ and $10$ seconds.
We show the magnitude of the velocity solutions (first and third rows of Fig.~\ref{fig:VelFiled}) and the vector plot of the flows  (second and fourth rows of Fig.~\ref{fig:VelFiled}).
It can be observed that, as time proceeds, the $\uht^{\texttt{no\,stab}}$ solution completely loses the correct vortex shape, moreover at the final time $t=10$ sec. the velocity magnitude is even greater than 4.5 (the threshold value is 1).
Whereas the \texttt{stab} method is not effected by instabilities: in $\uht^{\texttt{stab}}$ solution,
the spinning  vortices preserve their shapes as time advances. The velocity magnitude slowly decreases in time in accordance with the exact solution and is lower than 1.

In Fig.~\ref{fig:ErrorOverTime} we display 
the relative errors ($H^1$-seminorm and $L^2$-norm)
obtained with the \texttt{stab} method and the \texttt{no\,stab} method versus time.
It is easily appreciated that the \texttt{stab} method is clearly superior in terms of solution accuracy.
\begin{figure}[!htb]
\begin{center}
\centering
\begin{tabular}{cc}
\includegraphics[width=0.46\textwidth]{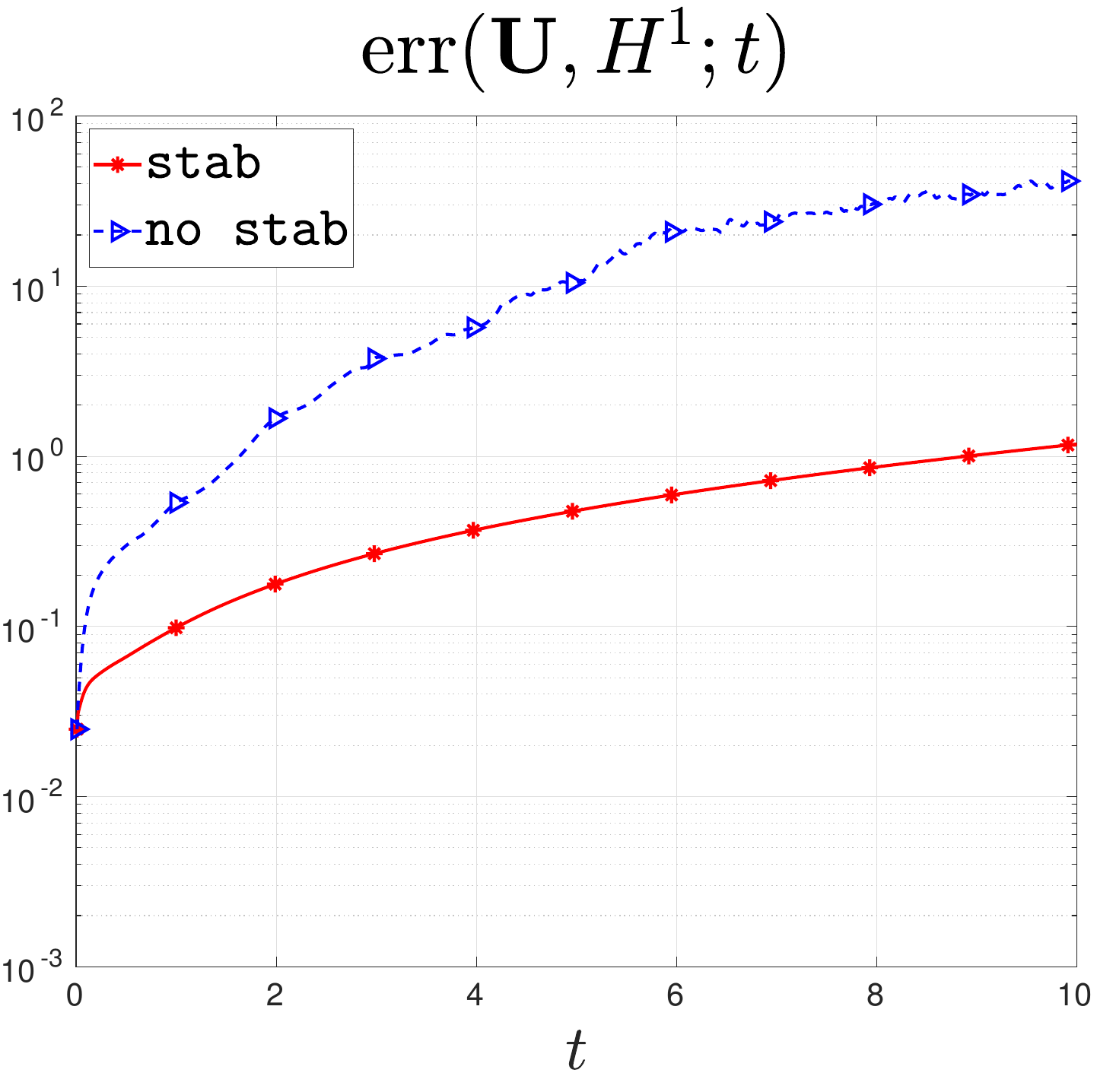}&
\includegraphics[width=0.46\textwidth]{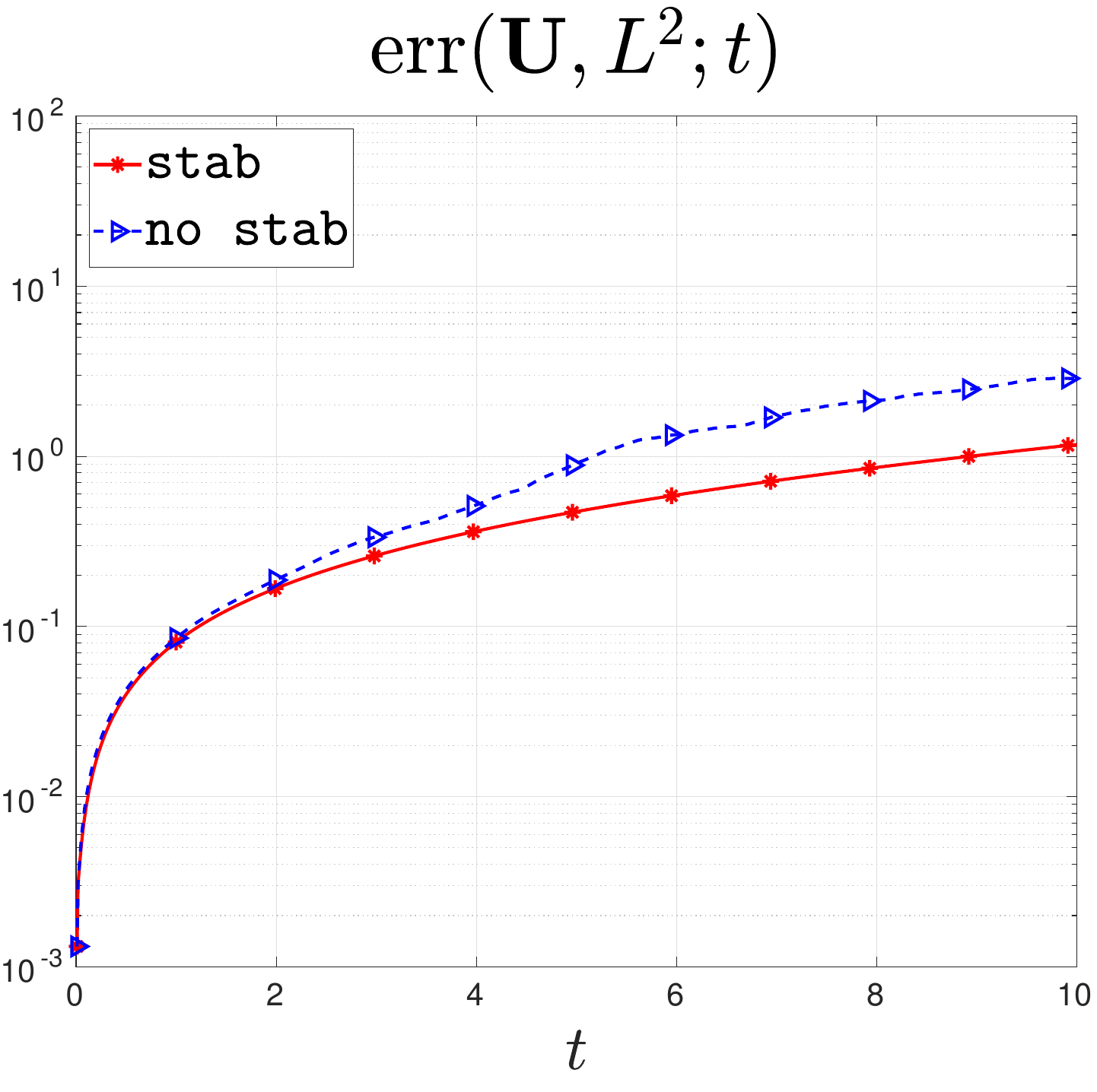}
\end{tabular}
\end{center}
\caption{Lattice vortex problem: $H^1$-seminorm (left) $L^2$-norm errors (right) on the velocity field.
}
\label{fig:ErrorOverTime}
\end{figure}
We finally address a simple comparison of the \texttt{stab} and the \texttt{no\,stab} methods in terms of computational cost.
In order to avoid comparisons which may depend on the particular implemented computer code, the amount of parallelization in the matrix assembly and on the linear system resolution procedure, 
we here limit ourselves in investigating the sparsity pattern of the global matrix associated to the linear system of the first fixed point iteration (at the first time step). 
In the table in Fig.~\ref{fig:meshAndData} (right) we report the number of non-zero \#nnz entries (and the percentage of such entries over total entries) for both the \texttt{stab} and the \texttt{no\,stab} schemes. 
In Fig.~\ref{fig:matrix} we depict the different sparsity patterns for the two methods.  
It is clear that the \texttt{stab} scheme has a denser pattern mainly due to the added jump terms.
Depending on the type of sparse solver adopted, this could lead in principle to longer computer times.  
\begin{figure}[!htb]
\begin{center}
\begin{subfigure}[t]{0.45\textwidth}
\includegraphics[width=0.8\textwidth]{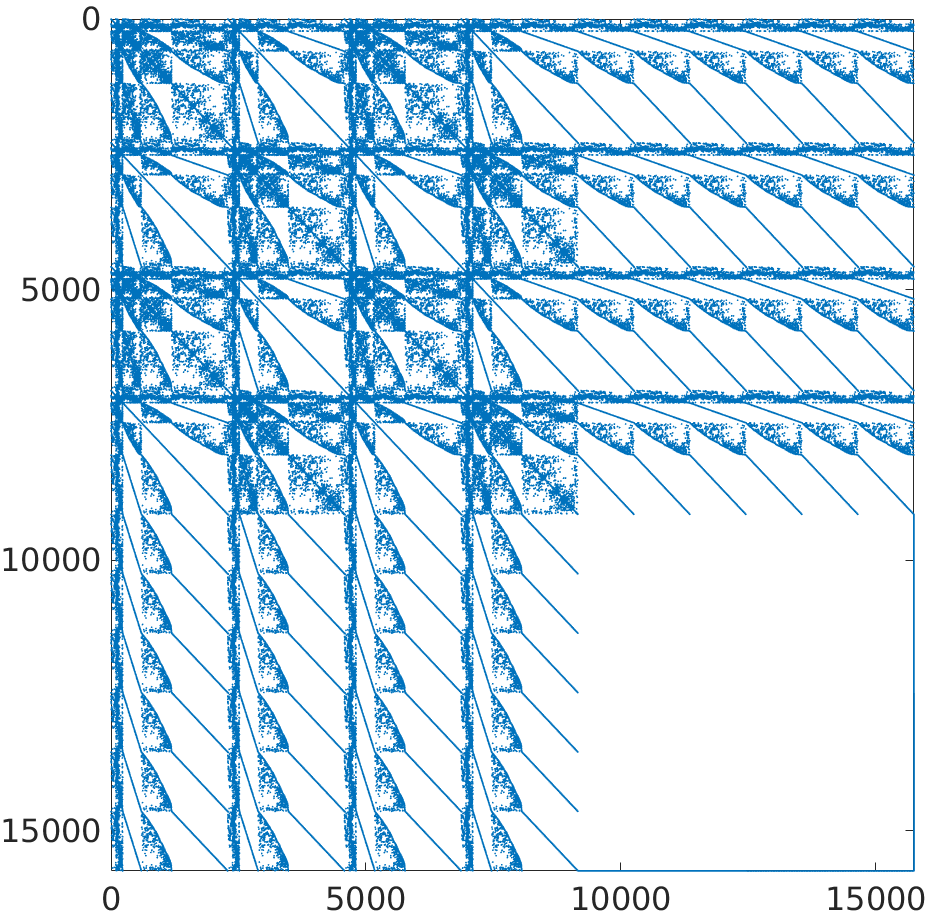}
\end{subfigure}
\begin{subfigure}[t]{0.45\textwidth}
\includegraphics[width=0.8\textwidth]{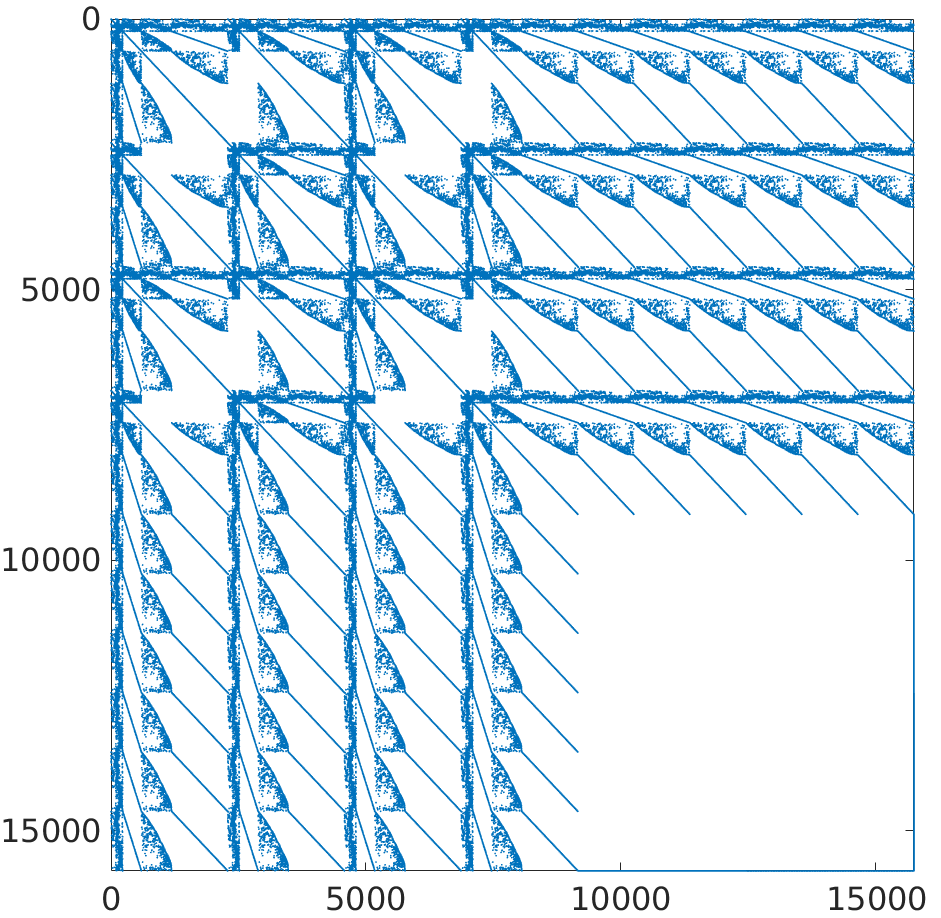}
\end{subfigure}
\end{center}
\caption{Lattice vortex problem: sparsity pattern of the matrix arising from the \texttt{stab} method and the \texttt{no\,stab} method.}
\label{fig:matrix}
\end{figure}
\begin{figure}[!htb]
\centering
\begin{tabular}{cccc}
\hspace{1em}$t=2.50$ sec. &$t=5.00$ sec. &$t=10.0$ sec.\\
\multirow{2}{*}{\rotatebox[origin=c]{90}{\texttt{no\,stab}}}
\hspace{1em}\includegraphics[height=0.30\textwidth]{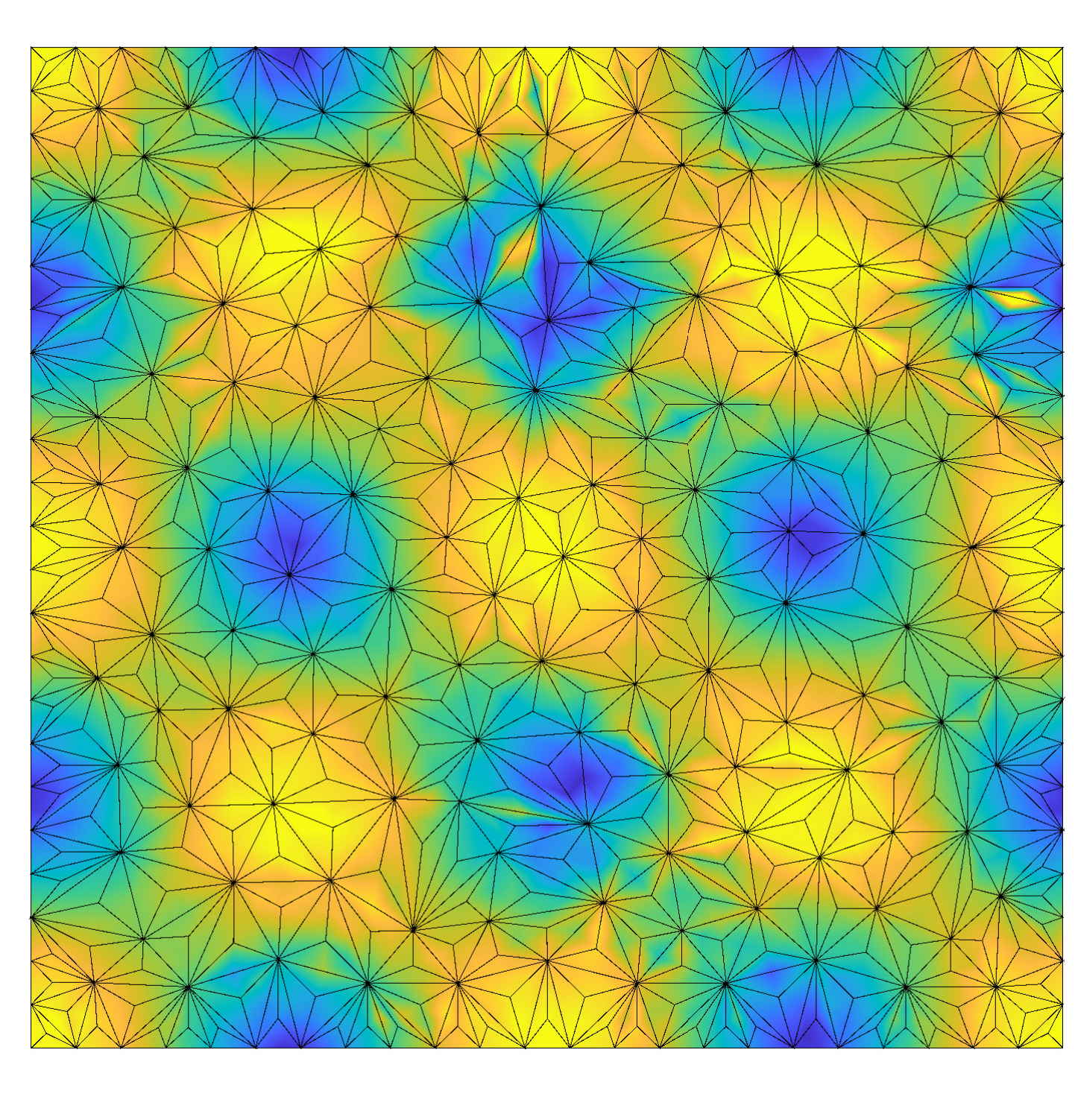} &
\includegraphics[height=0.30\textwidth]{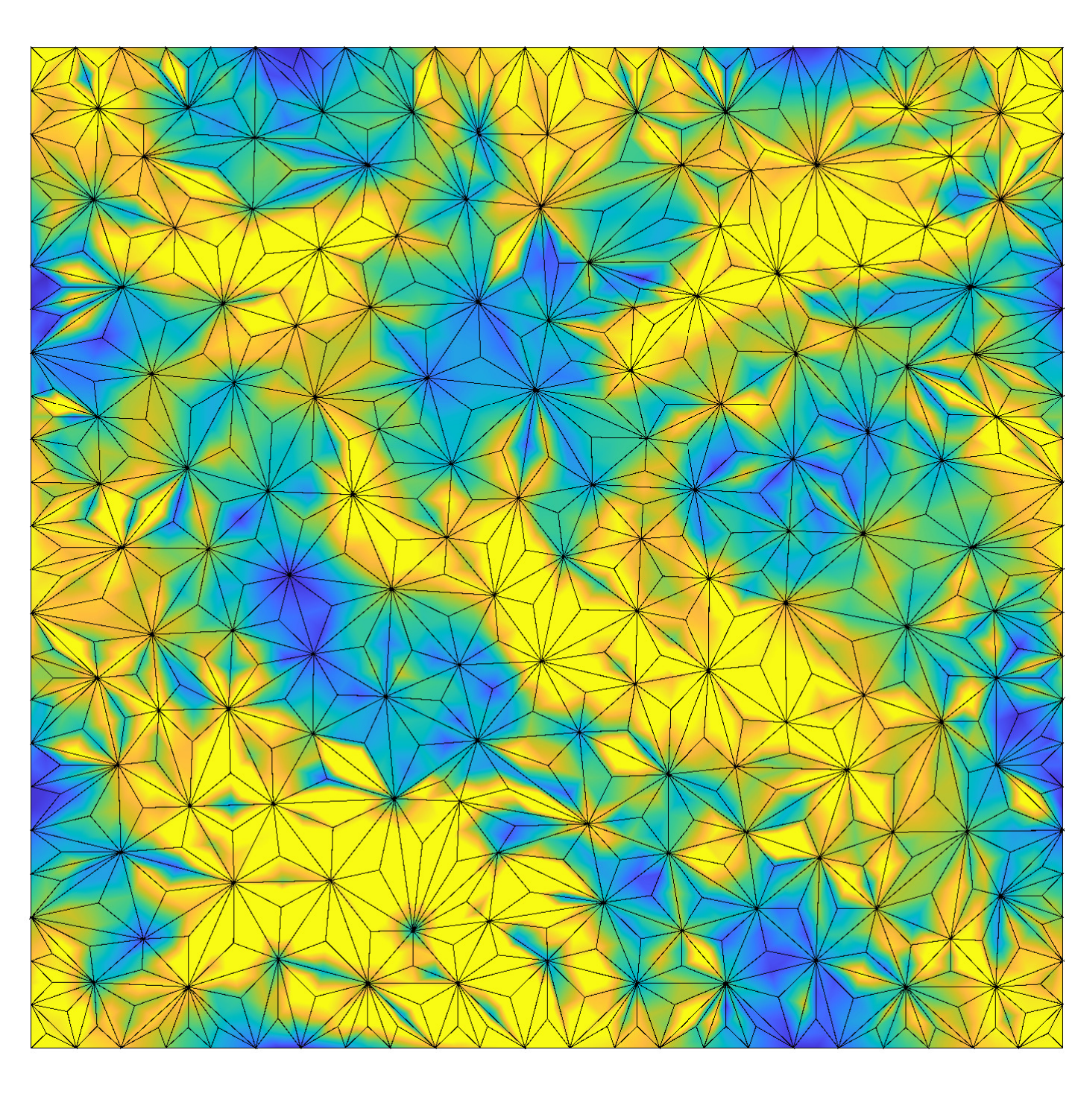} &
\includegraphics[height=0.30\textwidth]{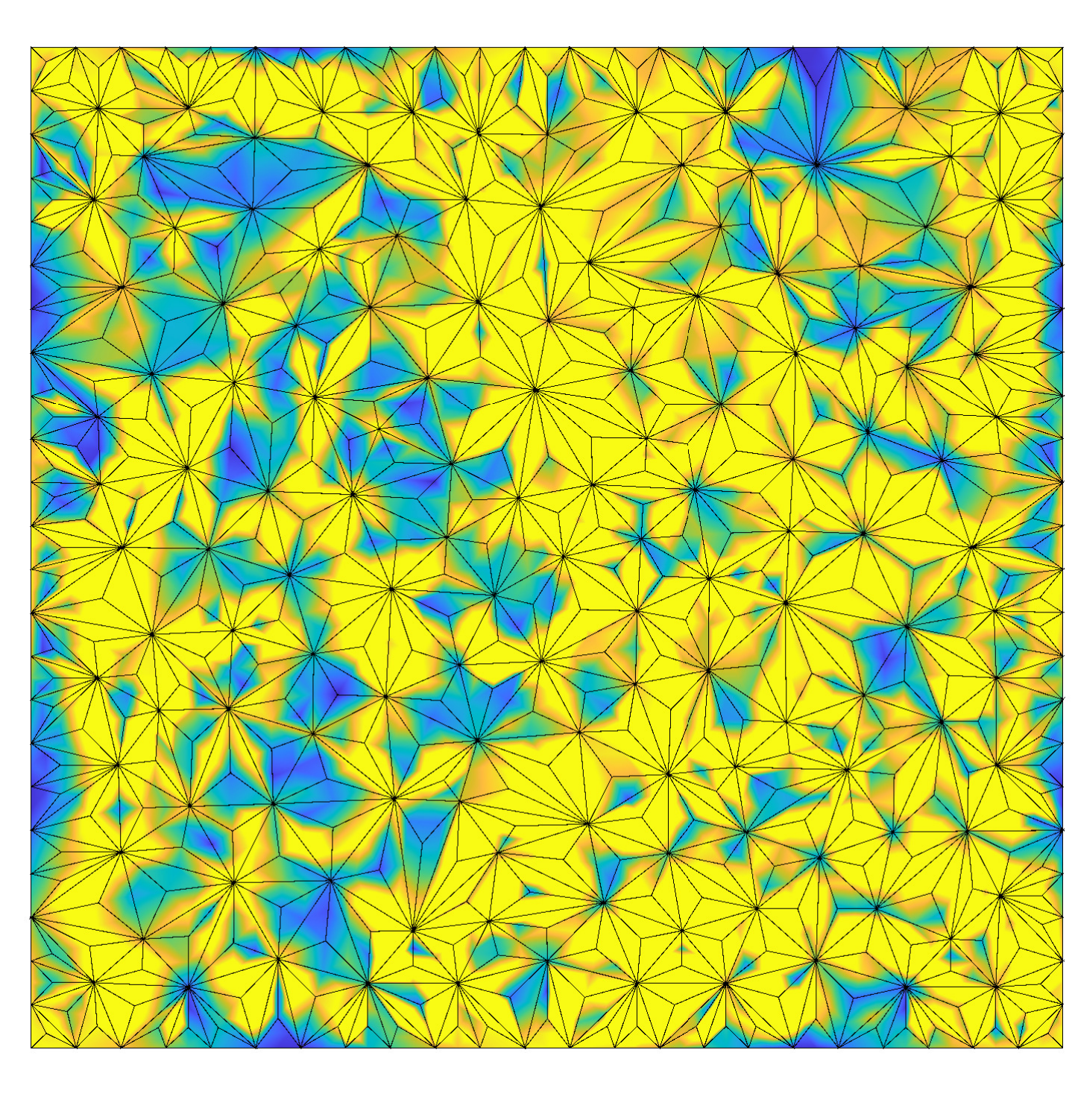} \\ 
\hspace{1em}\includegraphics[height=0.30\textwidth]{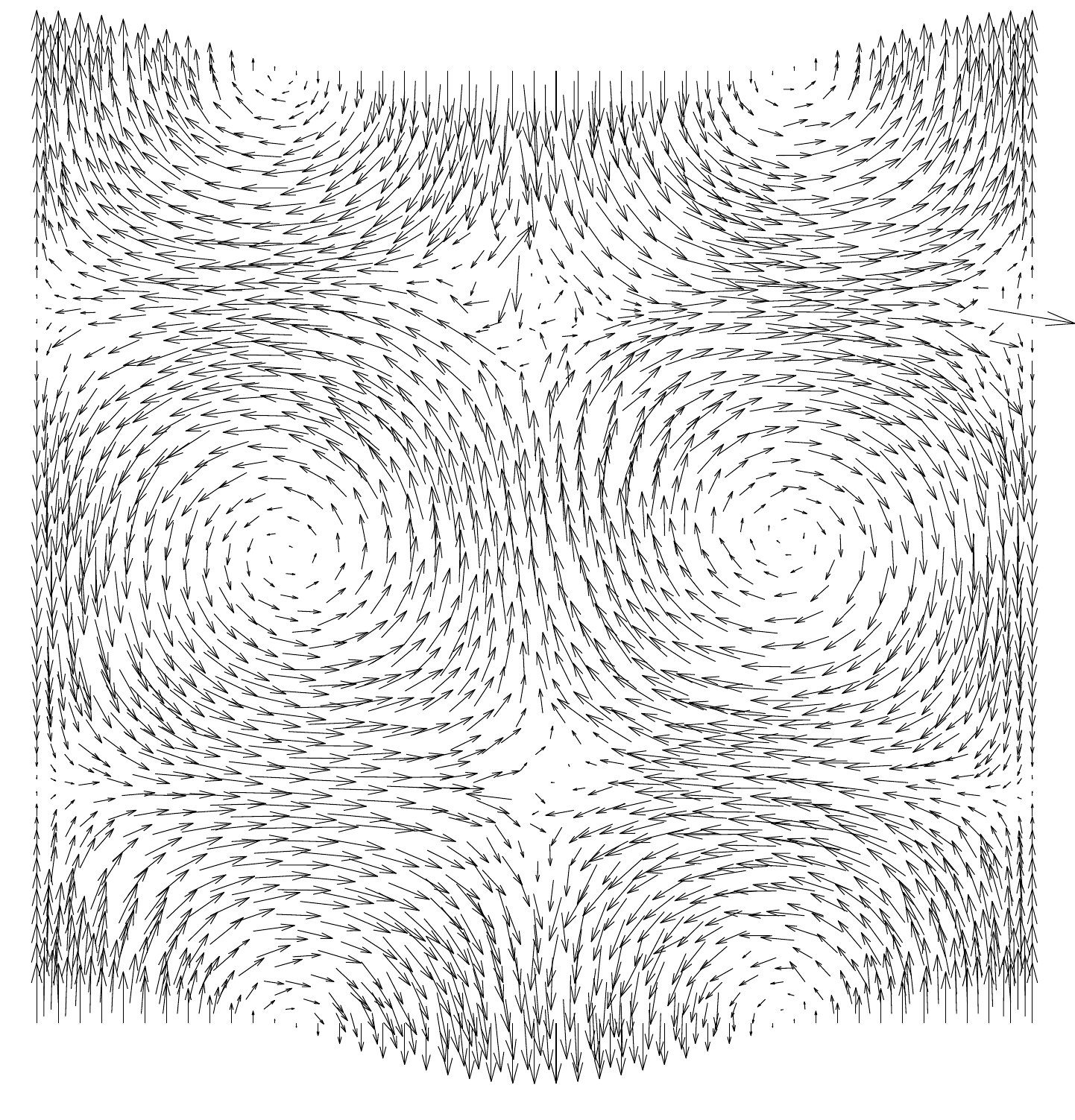} &
\includegraphics[height=0.30\textwidth]{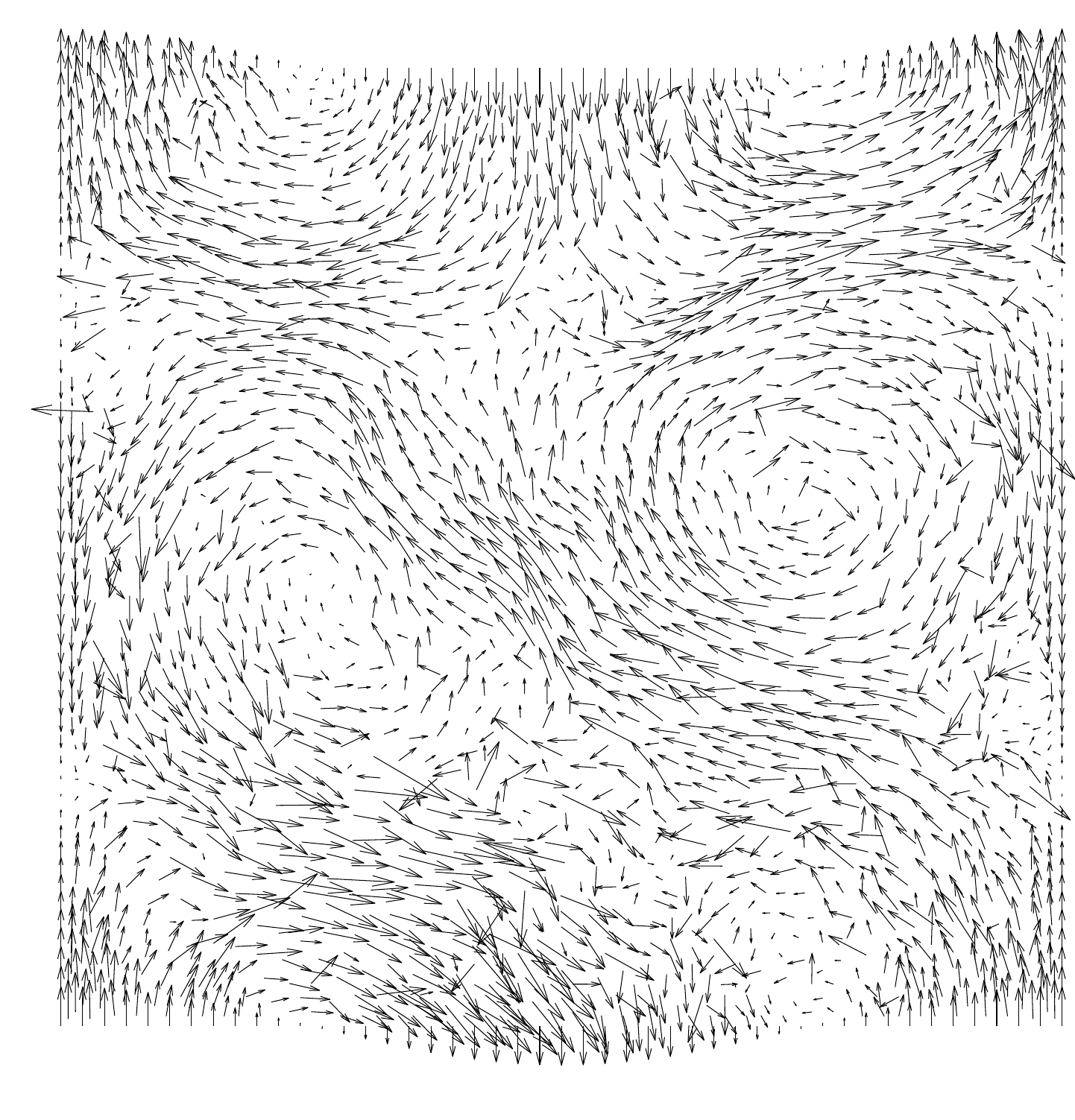} &
\includegraphics[height=0.30\textwidth]{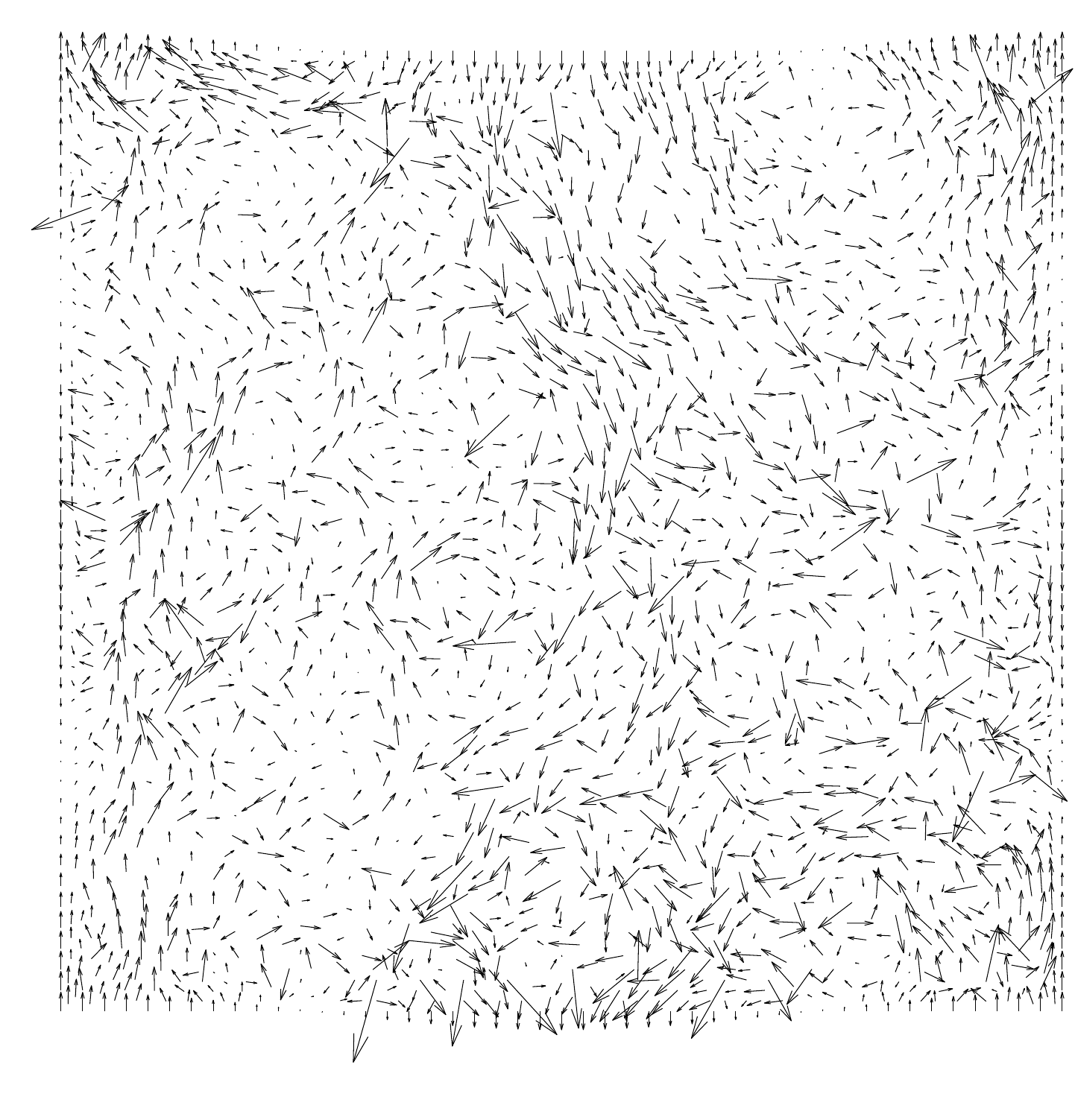} \\
\multirow{2}{*}{\rotatebox[origin=c]{90}{\texttt{stab}}}
\hspace{1em}\includegraphics[height=0.30\textwidth]{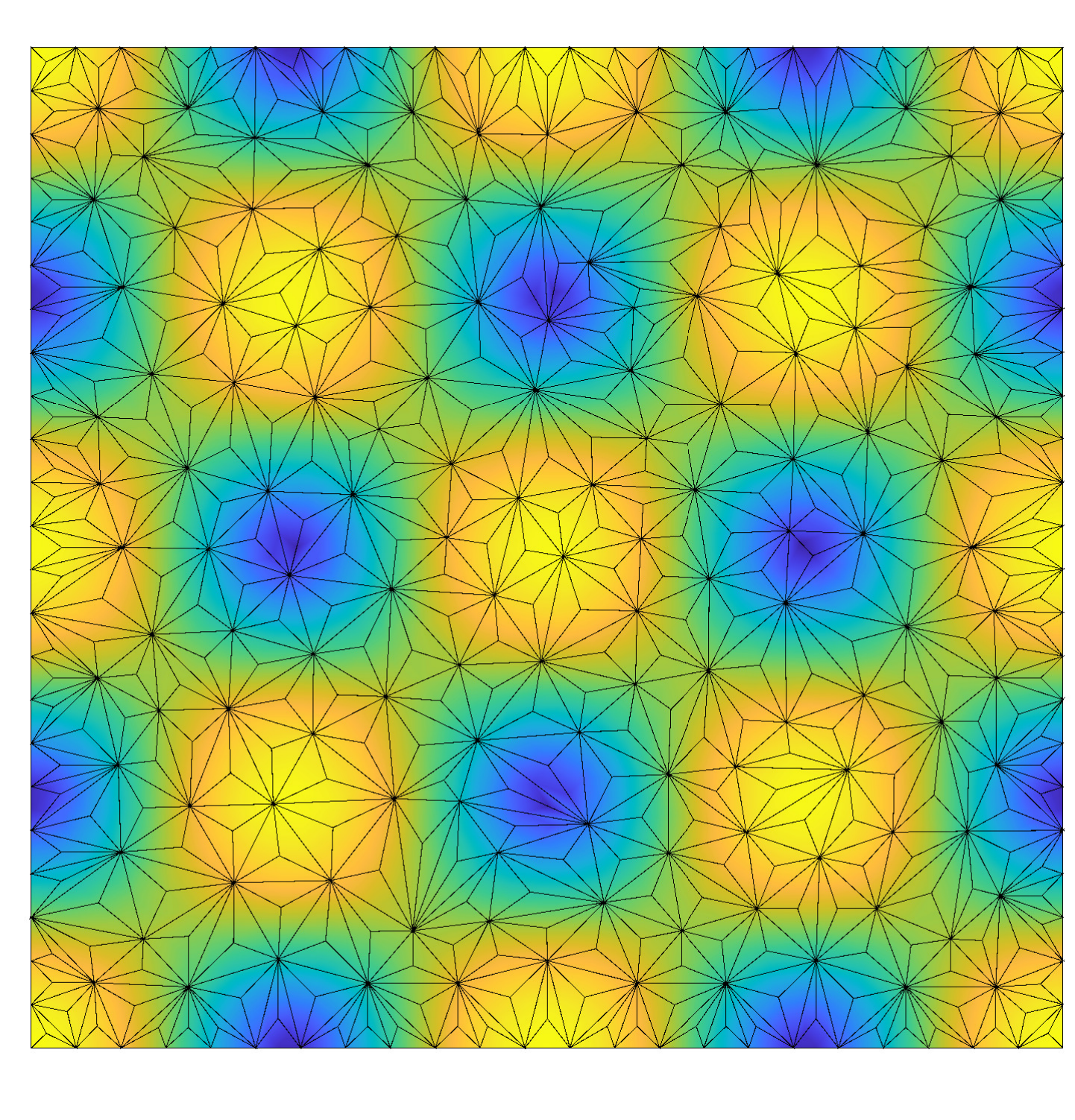} &
\includegraphics[height=0.30\textwidth]{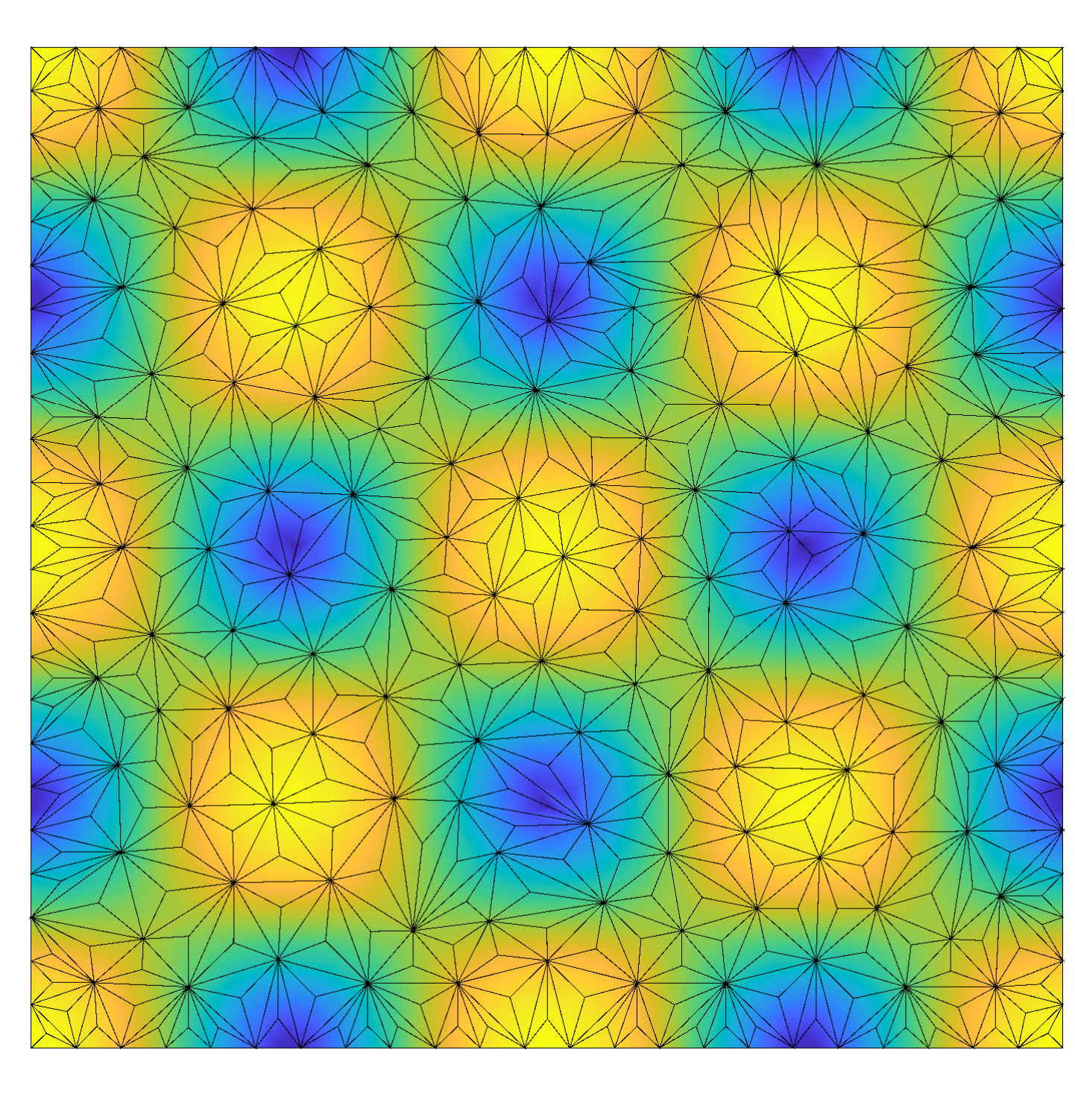} &
\includegraphics[height=0.30\textwidth]{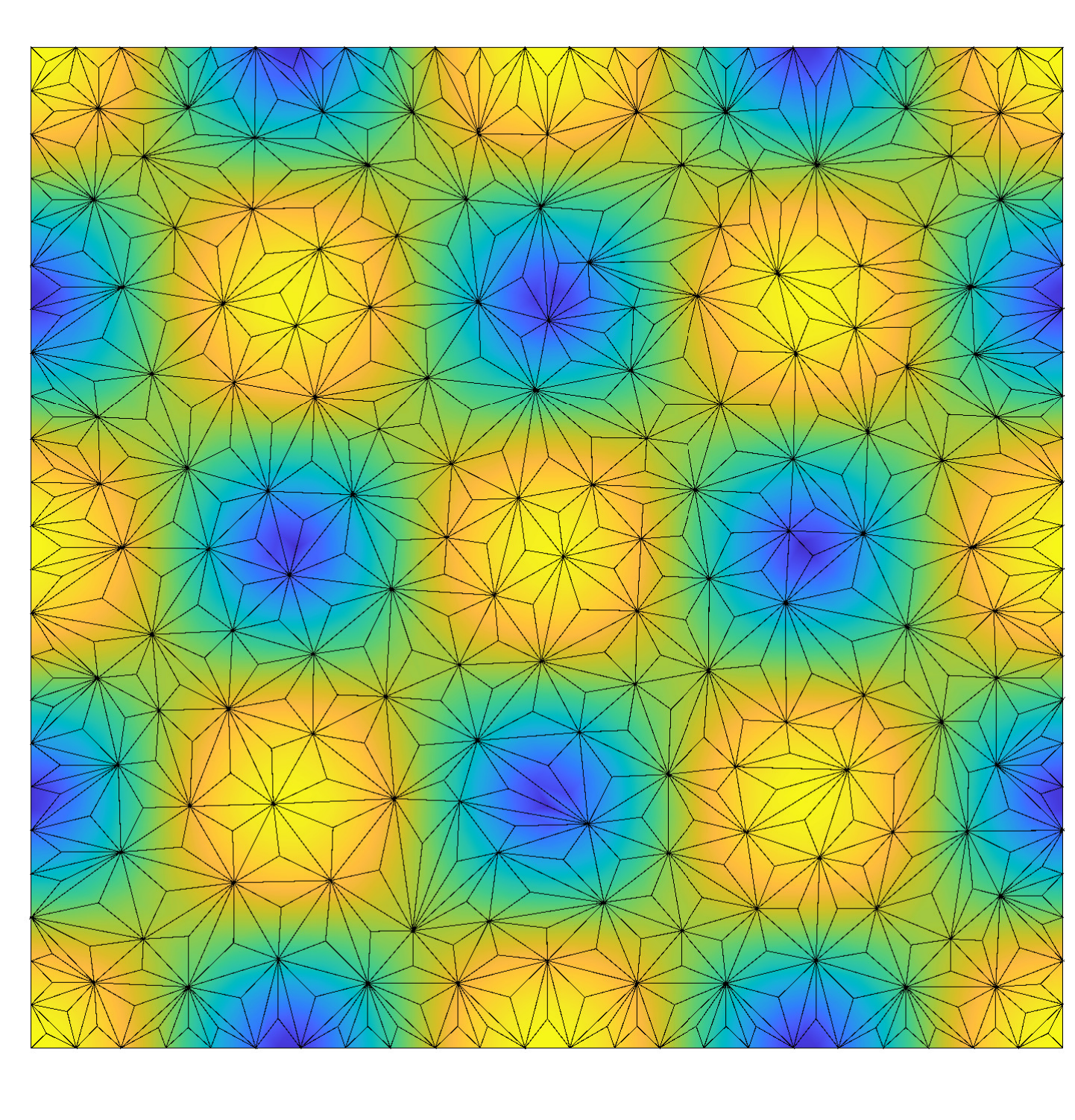} \\ 
\hspace{1em}\includegraphics[height=0.30\textwidth]{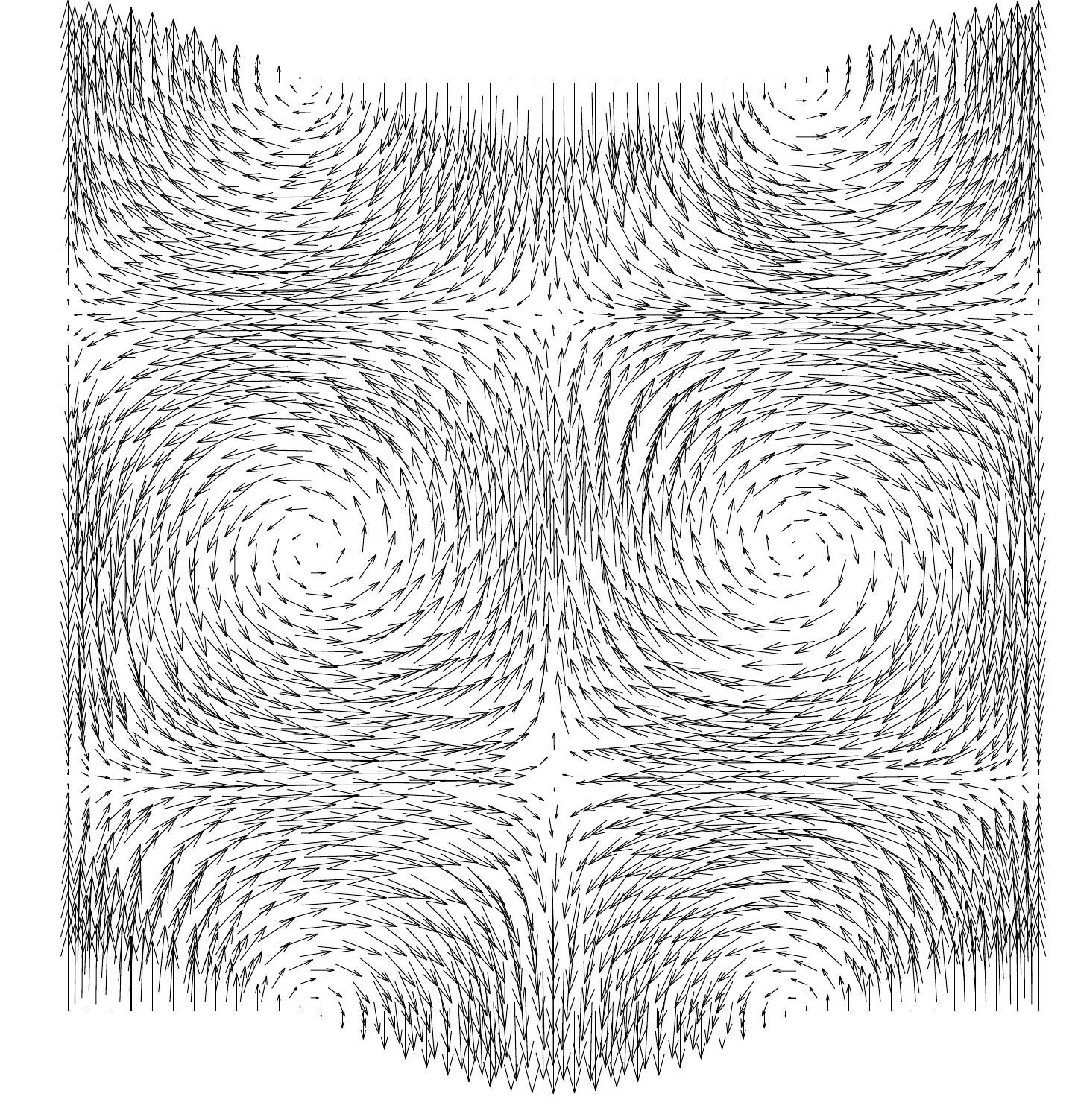} &
\includegraphics[height=0.30\textwidth]{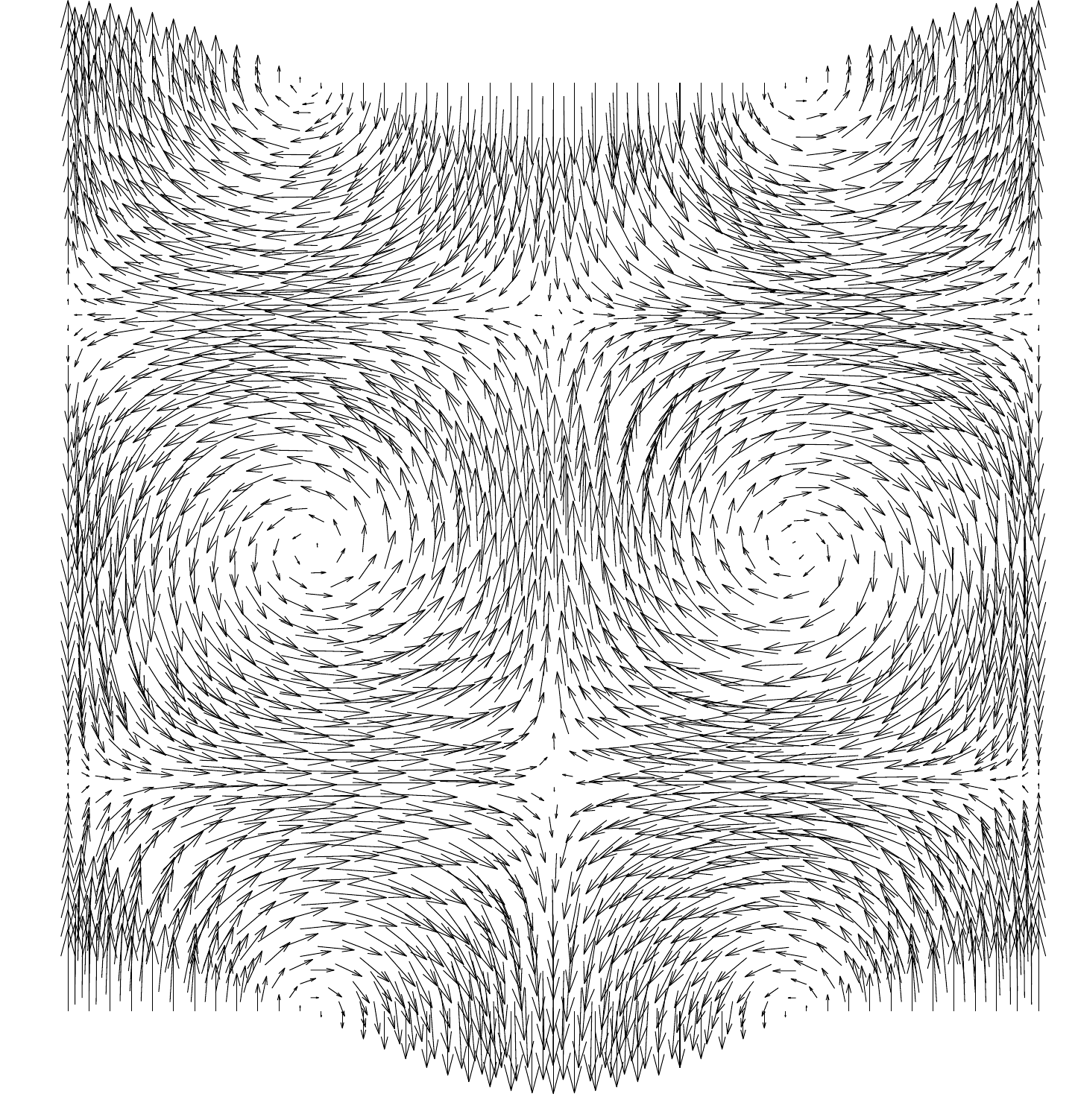} &
\includegraphics[height=0.30\textwidth]{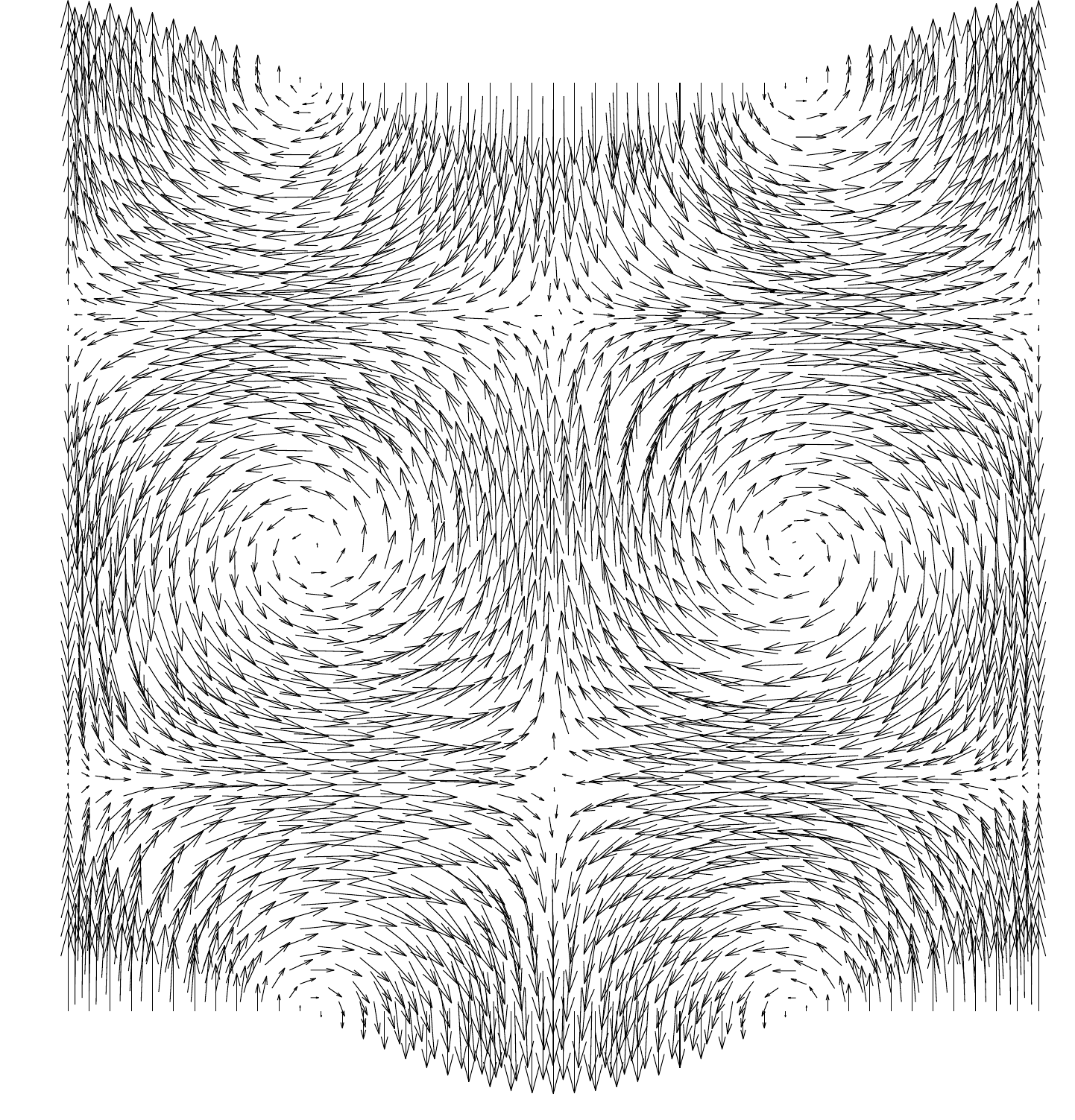} \\
\multicolumn{3}{c}{\begin{overpic}[abs,unit=1mm,width=0.50\textwidth]{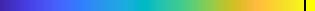}
                    \put(-1,-3){0.}
                    \put(69,-3){1.}
                    \put(73,0.5){$>1.$}
                   \end{overpic}}\\
\multicolumn{3}{c}{$|\uu_h|$}
\end{tabular}
\caption{Lattice vortex problem: comparison between the \texttt{stab} and the \texttt{no\,stab} scheme at different times.
Magnitude of the velocity fields (first and third rows) and  vector plot of the flows. (second and fourth rows).}
\label{fig:VelFiled}
\end{figure}

\section*{Funding}
LBdV was partially supported by the Italian MIUR through the PRIN grant n. 608 201744KLJL. 
This support is gratefully acknowledged.

\clearpage 

\addcontentsline{toc}{section}{\refname}
\bibliographystyle{plain}
\bibliography{references}
\end{document}